\newtheorem{proposition}{Proposition}
\newtheorem{theorem}{Theorem}
\newtheorem{lemma}{Lemma}
\newtheorem{corollary}{Corollary}
\theoremstyle{definition}
\newtheorem{definition}{Definition}
\theoremstyle{remark}
\newtheorem{example}{Example}
\DeclareMathOperator{\Dom}{Dom}
\DeclareMathOperator{\discr}{discr}
\DeclareMathOperator{\tridiag}{tridiag}
\DeclareMathOperator{\Bor}{Bor}
\DeclareMathOperator{\tr}{tr}
\newcommand{\NN}{\mathbb{N}}
\newcommand{\RR}{\mathbb{R}}
\newcommand{\ZZ}{\mathbb{Z}}
\newcommand{\CC}{\mathbb{C}}
\newcommand{\calA}{\mathcal{A}}
\newcommand{\calV}{\mathcal{V}}
\newcommand{\calB}{\mathcal{B}}
\newcommand{\calC}{\mathcal{C}}
\newcommand{\calF}{\mathcal{F}}
\newcommand{\calX}{\mathcal{X}}
\newcommand{\calD}{\mathcal{D}}
\newcommand{\calL}{\mathcal{L}}
\newcommand{\Id}{\mathrm{Id}}
\renewcommand{\restriction}{\mathord{\upharpoonright}}
\newcommand{\sprod}[2]{\langle {#1}, {#2} \rangle}
\newcommand{\norm}[1]{\lVert {#1} \rVert}
\newcommand{\normM}[1]{\lVert {#1} \rVert_\infty}
\newcommand{\abs}[1]{\lvert {#1} \rvert}
\newcommand{\sym}[1]{\mathrm{sym} \left[ {#1} \right] }
\newcommand{\floor}[1]{{\lfloor #1 \rfloor}}
\newcommand{\ud}{\mathrm{d}}
\newcommand{\Mod}[1]{\ (\text{mod}\ #1)}
\newcommand{\per}{\mathrm{per}}
\newcommand{\sigmaEss}[1]{\sigma_{\mathrm{ess}}(#1)}
\newcommand{\sigmaP}[1]{\sigma_{\mathrm{p}}(#1)}
\theoremstyle{plain}
\newcounter{thm}
\newtheorem{main_theorem}[thm]{Theorem}
\author{Grzegorz Świderski}
\email{gswider@math.uni.wroc.pl}
\address{
	Instytut Matematyczny\\
	Uniwersytet Wrocławski\\
	Pl. Grunwaldzki 2/4\\
	50-384 Wrocław\\
	Poland}
\title[Periodic perturbations of Jacobi matrices]{Periodic perturbations of unbounded\\Jacobi matrices III: The soft edge regime}
\keywords{Jacobi matrix, orthogonal polynomials, compact resolvent, spectral gap, absolute continuity, Turán determinants}
\subjclass[2010]{Primary: 47B25, 47B36, 42C05.}
\begin{document}
\selectlanguage{english}

\begin{abstract}
   We present pretty detailed spectral analysis of Jacobi matrices with periodically modulated entries
   in the case when $0$ lies on the soft edge of the spectrum of the corresponding periodic Jacobi matrix. 
   In particular, we show that the studied operators are always self-adjoint irrespective of the
   modulated sequence. Moreover, if the growth of the modulated sequence is superlinear, then 
   the spectrum of the considered operators is always discrete. Finally, we study regular perturbations of 
   this class in the linear and the sublinear cases. We impose conditions assuring that the spectrum 
   is absolute continuous on some regions of the real line. A constructive formula for the density in terms
   of Turán determinants is also provided.
\end{abstract}

\maketitle

\section{Introduction}
   Consider two sequences $a = (a_n \colon n \geq 0)$ and $b = (b_n \colon n \geq 0)$ such that for every 
   $n \geq 0$ one has $a_n > 0$ and $b_n \in \RR$.
   Then one defines the symmetric tridiagonal matrix by the formula
   \begin{equation*}
      \calA =
      \begin{pmatrix}
         b_0 & a_0 & 0   & 0      &\ldots \\
         a_0 & b_1 & a_1 & 0       & \ldots \\
         0   & a_1 & b_2 & a_2     & \ldots \\
         0   & 0   & a_2 & b_3   &  \\
         \vdots & \vdots & \vdots  &  & \ddots
      \end{pmatrix}.
   \end{equation*}
   The action of $\calA$ on any sequence is defined by the formal matrix multiplication. Let the operator 
   $A$ be the restriction of $\calA$ to $\ell^2$, i.e. 
   $\Dom(A) = \{ x \in \ell^2 \colon \calA x \in \ell^2 \}$ and $A x = \calA x$ for $x \in \Dom(A)$, where
   \[
      \sprod{x}{y}_{\ell^2} = \sum_{n=0}^\infty x_n \overline{y_n}, \quad \ell^2 = 
      \{ x \in \CC^\NN \colon \sprod{x}{x}_{\ell^2} < \infty \}.
   \]
   The operator $A$ is called \emph{Jacobi matrix}. It is self-adjoint provided Carleman condition 
   is satisfied, i.e. 
   \begin{equation} \label{eq:52}
   	\sum_{n=0}^\infty \frac{1}{a_n} = \infty.
   \end{equation}
   
	A generalised eigenvector $u$ associated with $x \in \RR$ is any sequence satisfying the recurrence
	relation
   \begin{equation} \label{eq:53}
		a_{n-1} u_{n-1} + b_n u_n + a_n u_{n+1} = x u_n \quad (n \geq 1).
   \end{equation}
   The most important one is the sequence $(p_n(x) : n \geq 0)$ satisfying \eqref{eq:53}
   with the initial conditions: $p_{0}(x) = 1,\ p_{1}(x) = (x-b_0)/a_0$. If operator $A$ is 
   self-adjoint, then the sequence $(p_n : n \geq 0)$ is orthonormal with respect to the scalar product
   \[
      \langle f, g \rangle_{L^2(\mu)} = \int_{\mathbb{R}} f(x) \overline{g(x)} \ud \mu(x), \quad 
      L^2(\mu) = \{ f \colon \langle f, f \rangle_{L^2(\mu)} < \infty \}
   \]
   for the Borel measure $\mu(\cdot) = \sprod{E_A(\cdot) \delta_0}{\delta_0}_{\ell^2}$,
   where $E_A$ is the spectral resolution of the identity for the operator $A$ and $(\delta_n : n \geq 0)$
   is the standard basis of $\ell^2$. Let $U : \ell^2 \rightarrow L^2(\mu)$ be defined by
   \[
      U \delta_n = p_n.
   \]
   Then 
   \[
      (U A U^{-1} f) (x) = x f(x), \quad f \in L^2(\mu).
   \]
   This representation shows that the spectral analysis of $A$ is equivalent to examining the properties
   of the measure $\mu$.

	The aim of this article is to provide pretty detailed description of the spectral properties of the 
	operator~$A$ associated with sequences of the form
	\begin{equation} \label{eq:58}
		a_{k N + i} = \alpha_i \tilde{a}_k, \quad 
		b_{k N + i} = \beta_i \tilde{a}_k, \quad 
		(i=0,1,\ldots,N-1;\ k \geq 0)
	\end{equation}
	and its perturbations, where $\alpha$ and $\beta$ are $N$-periodic sequences and $\tilde{a}$ 
	is a positive sequence. According to the terminology introduced in \cite{JanasNaboko2002}, $\alpha$ 
	and $\beta$ are called \emph{modulating sequences}. It turns out that the spectral properties of $A$
	depend on the trace of the following matrix.
	\begin{equation} \label{eq:54}
		\calF(0) := 
		\prod_{i=1}^{N}
		\begin{pmatrix}
			0 & 1 \\
			-\frac{\alpha_{n-1}}{\alpha_n} & -\frac{\beta_i}{\alpha_i}
		\end{pmatrix}
	\end{equation}
	More specifically, when $\tilde{a}_k \to \infty$, and under some regularity conditions imposed on $\tilde{a}$, 
	one has that the operator $A$ has purely absolutely continuous spectrum when $|\tr \calF(0)| < 2$ and purely 
	discrete one when $|\tr \calF(0)| > 2$ (see \cite{JanasNaboko2002}). If $|\tr \calF(0)| = 2$ we have 
	two possibilities. Either the matrix $\calF(0)$ is diagonalisable (which is equivalent to $\calF(0) = \pm \Id$)
	or it is similar to some non-trivial Jordan block. In this article we are concerned with the first case, which
	has been only touched in the recent articles~\cite{Swiderski2017b, Swiderski2017}. As it turns out, 
	this class exhibits some unusual spectral properties. 

	The first theorem shows that in our setting the operator $A$ is always self-adjoint irrespectively of the 
	sequence $\tilde{a}$.
\begin{main_theorem} \label{thm:A}
	Let $N$ be a positive integer. Assume that $\calF(0) = \gamma \Id$ for some $|\gamma| = 1$, where $\calF(0)$ 
	is defined in \eqref{eq:54}. Then operator $A$ associated with the sequences defined in \eqref{eq:58} 
	is \emph{always} self-adjoint 
	and\footnote{By $\sigma(A), \sigmaP{A}$ and $\sigmaEss{A}$ we denote: the spectrum, the point 
	spectrum and the essential spectrum of the operator $A$, respectively.} $0 \notin \sigmaP{A}$.
\end{main_theorem}
Let us recall that the operator $A$ is self-adjoint if and only if at least one non-zero solution
of \eqref{eq:53} for $x=0$ does not belong to $\ell^2$. In the case $b_n \equiv 0$ the recurrence relation 
for $x=0$ can be solved explicitly. The point of Theorem~\ref{thm:A} is that in our setting we can show 
that the sequence $(p_n(0) : n \geq 0)$ is $2N$-periodic, hence it cannot be square summable. We are
doing so by the use of transfer matrices. It turns out that the condition~\eqref{eq:54} implies that
$(p_n(0) : n \geq 0)$ does not depend on the sequence $\tilde{a}$.

The conclusion of Theorem~\ref{thm:A} is in a sharp contrast with the case when $|\tr \calF(0)| < 2$. 
In such a case it can be shown that under some regularity conditions imposed on $\tilde{a}$, operator 
$A$ is self-adjoint if and only if Carleman condition is satisfied (see \cite{Swiderski2017}). 
In Section~\ref{sec:constrModulSeq} we prove that the class of modulating sequences satisfying 
$\calF(0) = \pm \Id$ is surprisingly rich, we explain its geometric meaning and we provide some specific examples.

In the next theorem we study the spectral properties of $A$ in more detail.
\begin{main_theorem} \label{thm:B}
	Let the assumptions of Theorem~\ref{thm:A} be satisfied. Then we have the following cases.
	\begin{enumerate}[(a)]
		\item If 
		$\begin{aligned}[b]
			\lim_{k \rightarrow \infty} \frac{k}{\tilde{a}_k} = \infty
		\end{aligned}$, 
		then $0 \in \sigmaEss{A}$. \label{thm:B:eq:2}
		
		\item If the sequence 
		$\begin{aligned}[b]
			\left( \frac{k}{\tilde{a}_k} : k \geq 1 \right)
		\end{aligned}$
		is bounded, then $0 \notin \sigmaEss{A}$. \label{thm:B:eq:3}
		
		\item If
		$\begin{aligned}[b]
			\lim_{k \rightarrow \infty} \frac{k}{\tilde{a}_k} = 0
		\end{aligned}$,
		then $\sigmaEss{A} = \emptyset$. \label{thm:B:eq:4}
	\end{enumerate}
\end{main_theorem}
The conclusions of Theorem~\ref{thm:B} have been known only in the setting of $\alpha_n \equiv 1$ and
$\beta_n \equiv 0$. Specifically, the case \eqref{thm:B:eq:2} has been proven in
\cite{DombrowskiPedersen1995}, the case \eqref{thm:B:eq:3} in \cite{Janas2007, Moszynski2003, Sahbani2008}
and the case \eqref{thm:B:eq:4} in \cite{Janas2007, HintonLewis1978, Sahbani2008}.

It turns out that $\calA$ associated with the sequences defined in \eqref{eq:58} has simple representation
in the form of a block Jacobi matrix. This connection allows us to derive explicit formula for 
$\calA^{-1}$. In doing so the properties of orthonormal polynomials are crucial. In fact $\calA^{-1}$
may be expressed as the symmetrisation of a lower triangular Toeplitz matrix multiplied by a diagonal one.
Then the proof of Theorem~\ref{thm:B} contained in Section~\ref{sec:spectPhaseTrans}
is reduced to determining when $\calA^{-1}$ restricted to $\ell^2$ is an unbounded, a bounded or a compact
operator. In the proof Hardy's inequality concerning boundedness of Carleman operator is useful.

As we see case \eqref{thm:B:eq:3} is the exact point when the spectral gap around $0$ in $\sigmaEss{A}$
appears. The case \eqref{thm:B:eq:4} provides pretty satisfactory understanding of the 
spectrum of $A$. Hence, in the next theorem we consider the perturbations of the remaining cases 
in more detail.
\begin{main_theorem} \label{thm:C}
	Let $N$ be a positive integer. Suppose that $(r_n : n \geq 0)$ and $(q_n : n \geq 0)$ are
	$N$-periodic sequences such that
	\begin{equation} \label{thm:C:eq:1}
		\prod_{i=0}^{N-1} 
		\begin{pmatrix}
			0 & 1 \\
			-r_i & -q_i
		\end{pmatrix} 
		=
		\gamma \Id, \quad \gamma \in \{-1, 1 \}.
	\end{equation}
	Assume
	\begin{equation} \label{thm:C:eq:2}
		\calV_N \left( \frac{1}{a_n} : n \geq 0 \right) + 
		\calV_N(r_n a_n - a_{n-1} : n \geq 1) + 
		\calV_N (b_n - q_n a_n : n \geq 0) < \infty.
	\end{equation}
	Let
	\begin{enumerate}[(a)]
		\item $\begin{aligned}[b]
			\lim_{n \rightarrow \infty} a_n = \infty,
		\end{aligned}$ \label{thm:C:eq:3}
		
		\item $\begin{aligned}[b]
			\lim_{n \rightarrow \infty} |r_n a_n - a_{n-1} - s_n| = 0,
		\end{aligned}$ \label{thm:C:eq:4}
		
		\item $\begin{aligned}[b]
			\lim_{n \rightarrow \infty} |b_n - q_n a_n - z_n| = 0
		\end{aligned}$ \label{thm:C:eq:5}
	\end{enumerate}
	for $N$-periodic sequences $s$ and $z$. Then there is an explicit computable compact interval
	$I(s,z)$	 such that for every compact set $K \subset \Lambda := \RR \setminus I(s,z)$ there are 
	constants $c_1, c_2>0$ such that for every generalised eigenvector $u$ associated with 
	$x \in K$ and every $n \geq 1$
	\[
		c_1 (u_0^2 + u_1^2) \leq a_n (u_{n-1}^2 + u_n^2) \leq c_2 (u_0^2 + u_1^2).
	\]
\end{main_theorem}
	Let us recall that the total $N$-variation of a sequence $(x_n : n \geq 0)$ is defined by
	\[
		\calV_N(x_n : n \geq 0) = \sum_{n = 0}^\infty |x_{n+N} - x_n|.
	\]
	Notice that $\calV_N(x_n : n \geq 0) < \infty$ implies that for each $j \in \{0, \ldots, N-1\}$ the
	subsequence $(x_{k N + j} : k \geq 0)$ converges.

	The asymptotics provided by Theorem~\ref{thm:C} implies that the spectrum of $A$ is purely absolutely 
	continuous on $\Lambda$ (see \cite{Clark1996}) and $\sigma(A) \supset \overline{\Lambda}$. There are
	known examples when $\sigmaEss{A} = \overline{\Lambda}$ and we expect that it is always the case. In
	Sections \ref{sec:ex:multiple}, \ref{sec:ex:mod} and \ref{sec:ex:additive} we show its special
	cases and comment the connections with earlier results known in the literature.

	In the proof of Theorem~\ref{thm:C} we analyse the asymptotic behaviour of ($N$-shifted) Turán
	determinants defined by
	\[
		D^N_n(x) = p_n(x) p_{n+N-1}(x) - p_{n-1}(x) p_{n+N}(x).
	\]
	We are doing so by careful analysis of the regularity of transfer matrices.
	The same strategy was employed in \cite{Swiderski2017} in the case of constant $r$ and $q$
	but the present case is much more involved, both for the algebraic reasons and for the lack of the 
	explicit formulas for the orthonormal polynomials associated with periodic recurrence coefficients.

	Finally, the last theorem provides the constructive formula for the density of the measure $\mu$. 
	Technically, it is a consequence of the proof of the previous theorem together with the approach
	taken in \cite{Swiderski2017b}. More precisely, we take a sequence of approximations in the weak sense
	of the measure $\mu$ and by showing the uniform convergence on every compact subset of $\Lambda$ of 
	the densities of the approximations we can identify the formula for the density of $\mu$.
\begin{main_theorem} \label{thm:D}
	Let the assumptions of Theorem~\ref{thm:C} be satisfied. Then for every $i \in \{ 0, 1, \ldots, N-1 \}$
	\[
		\widetilde{g}^i(x) = \lim_{k \rightarrow \infty} a_{(k+1)N+i-1}^2 |D_{kN+i}^N(x)|, \quad 
		x \in \Lambda
	\]	
	exists and defines a continuous positive function. Moreover, if 
	\begin{equation} \label{thm:D:eq:1}
		\lim_{n \rightarrow \infty} |a_{n+N} - a_n| = 0,
	\end{equation}
	then $\mu$ is absolutely continuous on $\Lambda$ and its density equals
	\begin{equation} \label{thm:D:eq:2}
		\mu'(x) = \frac{\sqrt{h_i(x)}}{2 \pi \widetilde{g}^i(x)}, \quad x \in \Lambda,
	\end{equation}
	where $h_i$ is an explicit polynomial of degree $2$ with positive leading coefficient.
\end{main_theorem}
In fact, $\Lambda$ is the maximal set on which $h_0$ (hence all $h_i$) is strictly positive. 
We expect that assumption \eqref{thm:D:eq:1} is not needed. 

The article is organized as follows. In Section~\ref{sec:prelim} we collect basic facts 
and the notation used in the article. Next, in Section~\ref{sec:algIdent}, we prove some
auxiliary identities for the polynomials with periodic recurrence coefficients. 
In Section~\ref{sec:selfAdj} we prove Theorem~\ref{thm:A}. In Section~\ref{sec:formRes} we provide 
the explicit formula for $\calA^{-1}$ and in Section~\ref{sec:spectPhaseTrans} we present the proof 
of Theorem~\ref{thm:B}. Sections \ref{sec:asympGenEig} and \ref{sec:formDens} are devoted to the proofs 
of Theorem \ref{thm:C} and \ref{thm:D}, respectively. Finally, in Section~\ref{sec:examples} we show 
the methods of construction of the modulating sequences and present some special cases of 
Theorem~\ref{thm:C}. 

\section{Preliminaries} \label{sec:prelim}
In this section we present some basic facts and set the notation used in the rest of the article.

\subsection{Finite matrices}
	By $M_d(\RR)$ and $M_d(\CC)$ we denote the space of $d$ by $d$ matrices with real and complex entries,
	respectively. We index them from $1$ to $d$.
	
	By $\tridiag(x_i, y_i, z_i : i \in I)$ we mean the tridiagonal matrix with values $x_i, y_i$ and $z_i$
	on the subdiagonal, the diagonal and superdiagonal, respectively. The dimension of this matrix
	is the cardinality of the indexing set $I$.
	
	For a matrix $C \in M_d(\CC)$ its real part is defined by
	\[
		\sym{C} = \frac{1}{2}(C + C^*)
	\]
	where $C^*$ is the Hermitian transpose of $C$.
	
	For a sequence of square matrices $(C_n : n \in \NN)$ and 
	$n_0, n_1 \in \NN$ we set
	\[
		\prod_{k=n_0}^{n_1} C_k = 
		\begin{cases} 
			C_{n_1} C_{n_1 - 1} \cdots C_{n_0} & n_1 \geq n_0, \\
			\Id & \text{otherwise.}
		\end{cases}
	\]
	   
	With a binary quadratic form, that is a quadratic form on $\CC^2$, represented by
	a self-adjoint matrix $C$ we associate its \emph{discriminant} given by the formula 
	$\discr(C) = (\tr C)^2 - 4 \det C$. By the invariance of the determinant and the trace on the 
	conjugation, one has
	\[
		\discr(B^{-1} C B) = \discr(C)
	\]
	consequently, for any invertible matrix $B$ one has
	\[
		\discr(B C) = \discr(B^{-1} B C B) = \discr(C B).
	\]
	Observe that for every matrix $X \in M_2(\RR)$ of the form
	\[
		X = c \Id + C,
	\]
	where $c$ is a real number, the following formula holds true
	\begin{equation} \label{part3:eq:4}
		\discr(X) = \discr(C).
	\end{equation}
	   
	Finally, one has the following inequality valid for any $X \in M_d(\CC)$
	\begin{equation} \label{intro:nierownoscNaNormyMacierzy}
		\lVert X \rVert \leq \lVert X \rVert_2 \leq \lVert X \rVert_1,
	\end{equation}
	where $\| X \|_t$, is the $t$-norm of the matrix considered as the element of $\CC^{d^2}$.

\subsection{The total N-variation} \label{sec:prelim:variation}
	Given a positive integer $N$, we define the total $N$-variation $\calV_N$ of a sequence of vectors 
	$x = (x_n : n \geq m)$ from a vector space $V$ by
	\begin{equation} \label{eq:definicjaWahania}
		\calV_N(x) = \sum_{n = m}^\infty \norm{x_{n + N} - x_n}.
	\end{equation}
	Observe that if $(x_n : n \geq 0)$ has finite total $N$-variation then for each 
	$j \in \{0, \ldots, N-1\}$ a subsequence $(x_{k N + j} : k \geq 0 )$ is a Cauchy sequence. 

	The following Proposition collects some of the basic properties of total $N$-variation.
	The proof is straightforward.
	\begin{proposition} \label{prop:6}
		Let $V$ be a normed algebra, then the total $N$-variation has the following properties.
		\begin{enumerate}[(a)]
			\item $\begin{aligned}[b]
				\sup_{n \geq m} {\norm{x_n}} \leq \calV_N(x_n : n \geq m) + 
				\max\big\{\norm{x_m}, \ldots, \norm{x_{m+N-1}}\big\},
            \end{aligned}$ \label{prop:6a} 
            
            \item $\begin{aligned}[b]
               \calV_{N M} (x_n : n \geq m) \leq M \calV_N (x_n : n \geq m),
            \end{aligned}$ \label{prop:6b}
            
            \item $\begin{aligned}[b]
               \calV_{N} (x_n + y_n: n \geq m) \leq 
               \calV_{N} (x_n : n \geq m) + \calV_{N} (y_n : n \geq m),
            \end{aligned}$ \label{prop:6c}
            
            \item $\begin{aligned}[b]
               \calV_N(x_n y_n : n \geq m) \leq \sup_{n \geq m}{\norm{x_n}}\ \calV_N(y_n : n \geq m) +
               \sup_{n \geq m}{\norm{y_n}}\ \calV_N(x_n : n \geq m),
            \end{aligned}$ \label{prop:6d} 
            
            \item $\begin{aligned}[b]
               \calV_N(x_n^{-1} : n \geq m) \leq \sup_{n \geq m} \lVert x_n^{-1} \rVert^2 
               \calV_N(x_n : n \geq m).
            \end{aligned}$ \label{prop:6e}
		\end{enumerate}
	\end{proposition}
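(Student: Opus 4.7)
The plan is to prove each of the five properties by direct calculation from the definition~\eqref{eq:definicjaWahania}, using only the triangle inequality, telescoping sums, submultiplicativity of the algebra norm, and one Leibniz-type algebraic identity per item. None of the steps involves any analytic input beyond the normed algebra structure; the work is essentially bookkeeping, which is why the author calls the proof straightforward.

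For item~\eqref{prop:6a}, I would write any index $n \geq m$ uniquely as $n = m + r + jN$ with $0 \leq r < N$ and $j \geq 0$, and telescope
\[
    x_n = x_{m+r} + \sum_{k=0}^{j-1}\bigl(x_{m+r+(k+1)N} - x_{m+r+kN}\bigr).
\]
Taking norms, bounding $\|x_{m+r}\|$ by $\max\{\|x_m\|,\ldots,\|x_{m+N-1}\|\}$, and observing that the second sum is majorised by $\calV_N(x_n : n \geq m)$ yields the estimate. Item~\eqref{prop:6b} rests on the same telescoping in the other direction: from $x_{n+NM} - x_n = \sum_{k=0}^{M-1}(x_{n+(k+1)N} - x_{n+kN})$ one obtains $\|x_{n+NM}-x_n\| \leq \sum_{k=0}^{M-1}\|x_{n+(k+1)N} - x_{n+kN}\|$; summing over $n \geq m$ and interchanging the finite inner sum with the series produces exactly $M\,\calV_N(x_n : n \geq m)$. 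Item~\eqref{prop:6c} is just the triangle inequality applied to $(x_{n+N}+y_{n+N}) - (x_n+y_n) = (x_{n+N}-x_n) + (y_{n+N}-y_n)$ followed by term-by-term summation.

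For~\eqref{prop:6d}, the key identity is the discrete Leibniz rule
\[
    x_{n+N}\,y_{n+N} - x_n\,y_n = x_{n+N}(y_{n+N} - y_n) + (x_{n+N} - x_n)\,y_n;
\]
submultiplicativity $\|ab\| \leq \|a\|\|b\|$ followed by pulling the two suprema outside the sum produces the stated bound. Finally, for~\eqref{prop:6e}, the analogous identity is the resolvent-style formula
\[
    x_{n+N}^{-1} - x_n^{-1} = x_{n+N}^{-1}(x_n - x_{n+N})\,x_n^{-1},
\]
and applying submultiplicativity twice accounts for the square on $\sup_{n \geq m}\|x_n^{-1}\|$. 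I do not anticipate any genuine obstacle: each estimate follows from its identity in one line, and the only mild care needed is in~\eqref{prop:6b}, where one should justify the interchange of the finite sum over $k$ with the infinite sum over $n$, which is immediate since all terms are nonnegative.
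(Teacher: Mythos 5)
Your proof is correct and is precisely the straightforward argument the paper alludes to when it omits the proof: telescoping for (a) and (b), the triangle inequality for (c), the discrete Leibniz rule for (d), and the resolvent identity $x_{n+N}^{-1}-x_n^{-1}=x_{n+N}^{-1}(x_n-x_{n+N})x_n^{-1}$ for (e). No gaps.
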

      
	In the sequel we use Banach algebra $V = C(K; M_2(\CC))$ of continuous functions on a compact set 
	$K \subset \CC$ with values in $M_2(\CC)$. The associated norm is as follows
	\begin{equation} \label{eq:5}
		\normM{f} = \sup_{x \in K} \norm{f(x)},
	\end{equation}
	where $\norm{\cdot}$ is the operator norm.

\subsection{Transfer matrices, generalised eigenvectors and associated orthonormal polynomials}
The sequence $u = (u_n : n \geq 0)$ is called generalised eigenvector associated with $x \in \RR$
when $u$ satisfies recurrence relation \eqref{eq:53}. In Introduction we explained how orthonormal
polynomials are related to $u$.

In the sequel we need the following notion. For $k \geq 0$ we define $k$th associated \emph{orthonormal}
polynomials by
\[
	\begin{gathered}
		p^{[k]}_0(x) = 1, \qquad p^{[k]}_1(x) = \frac{x - b_k}{a_k}, \\
		a_{n+k-1} p^{[k]}_{n-1}(x) + b_{n+k} p^{[k]}_n(x) + a_{n+k} p^{[k]}_{n+1}(x) = 
			x p^{[k]}_n(x), \quad (n \geq 1).
	\end{gathered}
\]
These are orthonormal polynomials associated with the sequences
\[
	a^{[k]} = (a_{n+k} : n \geq 0), \qquad b^{[k]} = (b_{n+k} : n \geq 0).
\]
For $k=0$ we usually omit the superscript.

With sequences $a = (a_n : n \geq 0)$ and $b = (b_n : n \geq 0)$ of positive and real numbers,
respectively, the associated \emph{transfer matrix} is defined by
\[
	B_n(x) =
	\begin{pmatrix}
		0 & 1 \\
		-\frac{a_{n-1}}{a_n} & \frac{x - b_n}{a_n}
	\end{pmatrix}, \quad (x \in \CC)
\]
for $n > 0$. If sequences $a$ and $b$ are in fact defined for $n \in \ZZ$, then we define $B_n$ also
for $n \leq 0$. If $u$ is generalised eigenvector associated with $x \in \RR$, then
\[
	\begin{pmatrix}
		u_n \\
		u_{n+1}
	\end{pmatrix} 
	=
	B_n(x) 
	\begin{pmatrix}
		u_{n-1} \\
		u_{n}
	\end{pmatrix} 	
\]
for all $n$ such that the right-hand side is well-defined.

In what follows the properties of the transfer matrices and the orthonormal polynomials associated to 
$N$-periodic sequences $a$ and $b$ will be crucial. For this reason we fix for them special
notation: we denote by $\hat{\calB}^i$ the corresponding transfer matrix $B_i$ and by $w^{[k]}$
the corresponding sequence of polynomials $p^{[k]}$. If $a$ and $b$ are $N$-periodic, we usually denote 
them by $\alpha$ and $\beta$, respectively. By $N$-periodicity, we will consider $\alpha$ and $\beta$ as
sequences defined on $\ZZ$.

\section{Algebraic identities} \label{sec:algIdent}
The aim of this section is to collect some of the identities which will be needed in various parts of 
this article.

The following Proposition is taken from \cite[Proposition 21.1]{Simon2008}. Note that formula (21.1) 
in that article has a typo: as it is easily seen for $n=1$, the proposed sum should be with the minus sign.
Below we provide the corrected formula.
\begin{proposition} \label{prop:9}
	Let $(w_n : n \geq 0)$ be a sequence of orthonormal polynomials associated with $\alpha$ and $\beta$.
	Then for every $x$ one has
	\[
		w_n'(x) = \frac{1}{\alpha_0} 
		\sum_{m=0}^{n-1} \left[ w_m(x) w_{n-1}^{[1]}(x) - w_n(x) w_{m-1}^{[1]}(x) \right] w_m(x).
	\]
\end{proposition}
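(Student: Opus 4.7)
The plan is to treat the identity as a discrete Duhamel (variation-of-parameters) representation for the derivative $w_n'$, so the proof splits into three steps: (i) turning the claim into an inhomogeneous three-term recurrence, (ii) producing an explicit fundamental system of solutions with a computable Casoratian, and (iii) solving by variation of parameters.

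First, I would differentiate the three-term recurrence
\[
   \alpha_{n-1} w_{n-1}(x) + \beta_n w_n(x) + \alpha_n w_{n+1}(x) = x w_n(x), \quad (n \geq 1),
\]
in $x$, which yields
\[
   \alpha_{n-1} w_{n-1}'(x) + \beta_n w_n'(x) + \alpha_n w_{n+1}'(x) = x w_n'(x) + w_n(x), \quad (n \geq 1),
\]
with initial data $w_0'(x) = 0$ and $w_1'(x) = 1/\alpha_0$ read off from $w_0 = 1$ and $w_1(x) = (x - \beta_0)/\alpha_0$. Thus $w_n'$ is pinned down as the unique solution of this inhomogeneous recurrence with those initial values.

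For the fundamental system of the homogeneous equation I would take $\phi_n := w_n(x)$ together with $\psi_n := w_{n-1}^{[1]}(x)$, using the convention $w_{-1}^{[1]}(x) := 0$ so that $\psi_n$ satisfies the recurrence also at $n = 1$. Multiplying the recurrence for $\phi$ by $\psi_n$, the one for $\psi$ by $\phi_n$, and subtracting, one checks that $\alpha_n(\phi_n \psi_{n+1} - \phi_{n+1} \psi_n)$ is independent of $n$, and evaluating at $n = 0$ (where $\phi_0 = 1$, $\psi_0 = 0$, $\psi_1 = 1$) shows that the constant value is $\alpha_0$. Hence the Casoratian satisfies $\phi_n \psi_{n+1} - \phi_{n+1} \psi_n = \alpha_0 / \alpha_n$.

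With the fundamental system in hand, discrete variation of parameters applies directly: writing $w_n'(x) = c_{n-1} \phi_n + d_{n-1} \psi_n$ and imposing the usual one-step compatibility condition, Cramer's rule together with the clean value of the Casoratian produces
\[
   c_n - c_{n-1} = -\frac{w_n(x)\,\psi_n}{\alpha_0}, \qquad d_n - d_{n-1} = \frac{w_n(x)\,\phi_n}{\alpha_0}.
\]
Matching against the initial conditions pins down $c_{-1} = d_{-1} = 0$, and telescoping yields the claim after substituting $\phi_m = w_m$ and $\psi_m = w_{m-1}^{[1]}$. The only real, and modest, obstacle is the boundary bookkeeping—verifying that the convention $w_{-1}^{[1]} := 0$ makes $\psi$ an honest solution of the recurrence and that the Duhamel initial values reproduce those obtained by differentiating $w_0$ and $w_1$. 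An alternative route would verify the displayed formula directly by induction on $n$, with the inductive step reducing to exactly the same Casoratian identity.
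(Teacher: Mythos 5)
Your argument is correct, and it is worth noting that the paper itself offers no proof of this proposition: it simply imports it from Simon's survey (Proposition 21.1 there), correcting a sign typo. So your variation-of-parameters derivation is a genuinely self-contained alternative. The three ingredients all check out: differentiating the recurrence does give $\alpha_{n-1}w_{n-1}'+\beta_n w_n'+\alpha_n w_{n+1}'=xw_n'+w_n$ with $w_0'=0$, $w_1'=1/\alpha_0$; the pair $\phi_n=w_n$, $\psi_n=w_{n-1}^{[1]}$ (with $w_{-1}^{[1]}:=0$) is a fundamental system, and the constancy of $\alpha_n(\phi_n\psi_{n+1}-\phi_{n+1}\psi_n)=\alpha_0$ is exactly the identity the paper proves separately as Proposition~\ref{prop:11} via the determinant of the transfer-matrix product in Proposition~\ref{prop:13}; and the jump formulas $c_n-c_{n-1}=-w_n\psi_n/\alpha_0$, $d_n-d_{n-1}=w_n\phi_n/\alpha_0$ with $c_{-1}=d_{-1}=0$ telescope to the stated sum. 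One small caveat on the bookkeeping: with your ansatz $y_n=c_{n-1}\phi_n+d_{n-1}\psi_n$, the compatibility condition that actually holds is $(c_n-c_{n-1})\phi_n+(d_n-d_{n-1})\psi_n=0$ (the jump tested against $\phi_n,\psi_n$, not against $\phi_{n+1},\psi_{n+1}$ as in the most common normalization); with that reading, a direct substitution confirms $Ly_n=w_n$ because the surviving cross term is $\alpha_n\cdot\frac{w_n}{\alpha_0}(\phi_n\psi_{n+1}-\phi_{n+1}\psi_n)=w_n$. A quick check at $n=1,2$ confirms the formula, including the $m=0$ term, which requires the same convention $w_{-1}^{[1]}=0$ that the paper leaves implicit.
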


The next Proposition is well-known but it is usually stated in a different form (see, e.g.,
\cite[formula (3.2.19)]{Simon2010}).
\begin{proposition} \label{prop:13}
	Let $(w_n : n \geq 0)$ be a sequence of orthonormal polynomials associated with $N$-periodic sequences 
	$\alpha$ and $\beta$. Then for every $n \geq 2$
	\[
		\prod_{j=0}^{n-1} \hat{\calB}^j(x) =
		\begin{pmatrix}
			-\frac{\alpha_{N-1}}{\alpha_0} w_{n-2}^{[1]}(x) & w_{n-1}(x) \\
			-\frac{\alpha_{N-1}}{\alpha_0} w_{n-1}^{[1]}(x) & w_{n}(x)
		\end{pmatrix}.
	\]
\end{proposition}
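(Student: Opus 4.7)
The plan is to prove the identity by induction on $n$, treating it as a bookkeeping exercise built on the three-term recurrence relations for $w_n$ and $w_n^{[1]}$. Denote $M_n(x) = \prod_{j=0}^{n-1} \hat{\calB}^j(x)$ and let $R_n(x)$ stand for the proposed matrix on the right-hand side.

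First I would verify the base case $n = 2$ directly. Since $\alpha$ is extended to $\ZZ$ by $N$-periodicity, $\alpha_{-1} = \alpha_{N-1}$, and computing $\hat{\calB}^1(x) \hat{\calB}^0(x)$ entry by entry immediately yields the four expected values $-\frac{\alpha_{N-1}}{\alpha_0}$, $\frac{x-\beta_0}{\alpha_0}$, $-\frac{\alpha_{N-1}(x-\beta_1)}{\alpha_0 \alpha_1}$, and $\frac{(x-\beta_0)(x-\beta_1)}{\alpha_0 \alpha_1} - \frac{\alpha_0}{\alpha_1}$. These match $w_0^{[1]} = 1$, $w_1 = (x-\beta_0)/\alpha_0$, $w_1^{[1]} = (x-\beta_1)/\alpha_1$ and the value of $w_2$ obtained from the recurrence $\alpha_0 w_0 + \beta_1 w_1 + \alpha_1 w_2 = x w_1$, giving $R_2(x) = M_2(x)$.

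For the inductive step, assume $M_n(x) = R_n(x)$. Since $M_{n+1}(x) = \hat{\calB}^n(x) M_n(x)$, I need only multiply out and recognize each entry. The top row is trivial: the first row of $\hat{\calB}^n(x)$ is $(0,1)$, so it simply copies the second row of $M_n(x)$, producing exactly the top row of $R_{n+1}(x)$. For the bottom row the $(2,2)$ entry is
\[
-\frac{\alpha_{n-1}}{\alpha_n} w_{n-1}(x) + \frac{x-\beta_n}{\alpha_n} w_n(x) = w_{n+1}(x),
\]
by the orthonormal polynomial recurrence. The $(2,1)$ entry equals $-\frac{\alpha_{N-1}}{\alpha_0} \cdot \frac{1}{\alpha_n}\bigl[(x-\beta_n) w_{n-1}^{[1]}(x) - \alpha_{n-1} w_{n-2}^{[1]}(x)\bigr]$, and the shifted recurrence
\[
\alpha_{n-1} w_{n-2}^{[1]}(x) + \beta_n w_{n-1}^{[1]}(x) + \alpha_n w_n^{[1]}(x) = x w_{n-1}^{[1]}(x)
\]
for $(w_n^{[1]})$ (obtained by replacing $n$ with $n-1$ in the defining recurrence for $p_n^{[1]}$) rewrites the bracket as $\alpha_n w_n^{[1]}(x)$, yielding $-\frac{\alpha_{N-1}}{\alpha_0} w_n^{[1]}(x)$ as required.

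I do not foresee a real obstacle here; the only care needed is the indexing convention for the associated polynomials $w_n^{[1]}$, making sure the recurrence is correctly shifted so that it involves $\alpha_{n-1}, \alpha_n, \beta_n$ rather than $\alpha_n, \alpha_{n+1}, \beta_{n+1}$, and the appeal to $N$-periodicity to identify $\alpha_{-1}$ with $\alpha_{N-1}$ in the base case.
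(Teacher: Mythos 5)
Your proof is correct and is essentially the paper's own argument: the paper also verifies the case $n=2$ by direct computation and then runs a simple induction using the recurrence relations for $w$ and $w^{[1]}$. You have merely written out the entrywise details (including the correct index shift in the recurrence for $w^{[1]}$ and the identification $\alpha_{-1}=\alpha_{N-1}$), all of which check out.
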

\begin{proof}
	Direct computation shows that the formula is valid for $n=2$. Therefore, simple induction together with
	recurrence relation for $w$ and $w^{[1]}$ completes the proof.
\end{proof}

The following Proposition is an extension of Turán identity for Chebyshev polynomials of the second
kind (obtained for $\alpha_n \equiv 1$ and $\beta_n \equiv 0$).
\begin{proposition} \label{prop:11}
	Let $(w_i : i \geq 0)$ be a sequence of orthonormal polynomials associated with $N$-periodic sequences 
	$\alpha$ and $\beta$. Then
	\[
		w_{i}(x) w_{i}^{[1]}(x) - w_{i+1}(x) w_{i-1}^{[1]}(x) = 
		\frac{\alpha_0}{\alpha_{i}}
		\quad (i \geq 1).
	\]
\end{proposition}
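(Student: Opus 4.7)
The plan is to derive this Wronskian-type identity as a direct consequence of Proposition~\ref{prop:13}, by computing the determinant of both sides of the matrix identity stated there.

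First I would set $n = i+1$ (so that the requirement $n \geq 2$ corresponds to $i \geq 1$). Then Proposition~\ref{prop:13} yields
\[
    \prod_{j=0}^{i} \hat{\calB}^j(x) =
    \begin{pmatrix}
        -\frac{\alpha_{N-1}}{\alpha_0} w_{i-1}^{[1]}(x) & w_{i}(x) \\
        -\frac{\alpha_{N-1}}{\alpha_0} w_{i}^{[1]}(x) & w_{i+1}(x)
    \end{pmatrix},
\]
so the determinant of the right-hand side is
\[
    -\frac{\alpha_{N-1}}{\alpha_0} \bigl[ w_{i-1}^{[1]}(x) w_{i+1}(x) - w_{i}(x) w_{i}^{[1]}(x) \bigr].
\]

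Next I would compute the determinant of each factor directly from the definition of the transfer matrix: $\det \hat{\calB}^j(x) = \alpha_{j-1}/\alpha_j$. Multiplying these telescopes to $\alpha_{-1}/\alpha_{i}$, which by $N$-periodicity equals $\alpha_{N-1}/\alpha_{i}$. Equating the two expressions for the determinant of the product and cancelling the factor $\alpha_{N-1}$ gives
\[
    w_{i}(x) w_{i}^{[1]}(x) - w_{i+1}(x) w_{i-1}^{[1]}(x) = \frac{\alpha_0}{\alpha_i},
\]
which is the claimed identity.

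There is no real obstacle: the only slightly delicate point is keeping careful track of the periodic extension $\alpha_{-1}=\alpha_{N-1}$ and verifying that the telescoping of the determinants works for the convention used in the definition of the product of transfer matrices. An alternative route would be to argue inductively on $i$ using the recurrence relations for $w$ and $w^{[1]}$ (essentially a Casoratian/Wronskian argument, since the identity is a discrete Wronskian of two solutions of the same three-term recurrence shifted by one), but going through Proposition~\ref{prop:13} is cleaner because all the bookkeeping is already done there.
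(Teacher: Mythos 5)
Your proof is correct and follows essentially the same route as the paper: apply Proposition~\ref{prop:13} with $n=i+1$ and take determinants of both sides, using $\det \hat{\calB}^j(x) = \alpha_{j-1}/\alpha_j$ and the periodic convention $\alpha_{-1}=\alpha_{N-1}$ to evaluate the left-hand side as $\alpha_{N-1}/\alpha_i$. All the bookkeeping in your version checks out, so there is nothing to add.
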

\begin{proof}
	By Proposition~\ref{prop:13} for every $i \geq 1$
	\[
		\prod_{j=0}^{i} \hat{\calB}^j(x) =
		\begin{pmatrix}
			-\frac{\alpha_{N-1}}{\alpha_0} w_{i-1}^{[1]}(x) & w_{i}(x) \\
			-\frac{\alpha_{N-1}}{\alpha_0} w_{i}^{[1]}(x) & w_{i+1}(x)
		\end{pmatrix}.
	\]
	Taking the determinant of both sides gives
	\[
		\frac{\alpha_{N-1}}{\alpha_i} = 
		\frac{\alpha_{N-1}}{\alpha_0} \left[ w_{i}(x) w_{i}^{[1]}(x) - w_{i+1}(x) w_{i-1}^{[1]}(x) \right]
	\]
	and the result follows.
\end{proof}

In the next result we provide a simple formula for the inverse of finite Jacobi matrices in terms of
the associated orthonormal polynomials. 
\begin{proposition} \label{prop:15}
	Let $(w^{[k]}_i(x) : i \in \mathbb{N})$ be a sequence of \emph{orthonormal} polynomials associated with
	sequences $\alpha^{[k]} = (\alpha_{i+k} : i \in \mathbb{N})$ and 
	$\beta^{[k]} = (\beta_{i+k} : i \in \mathbb{N})$. Then for
	\[
		D_M(x) = \tridiag(\alpha_i, \beta_i - x, \alpha_i : 0 \leq i < M)
	\]
	we have
	\[
		[D^{-1}_M(x)]_{i, j} = 
		\begin{cases}
			-\frac{w_{j-1}(x) w_{M-i}^{[i]}(x)}{\alpha_{i-1} w_M(x)} & M \geq i \geq j \geq 1, \\
			[D^{-1}_M(x)]_{j, i} & \text{otherwise.}
		\end{cases}
	\]
\end{proposition}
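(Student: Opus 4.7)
The plan is to compute the inverse via Cramer's rule. A one-line induction from the three-term recurrence yields the standard determinant identity
\[
\det D_n(x) = (-1)^n \alpha_0 \alpha_1 \cdots \alpha_{n-1}\, w_n(x),
\]
and the same formula applies to the shifted truncation $D_n^{[k]}(x)$, with $\alpha_i, w_n$ replaced by $\alpha_{i+k}, w_n^{[k]}$. Cramer's rule then expresses
\[
[D_M^{-1}(x)]_{i,j} = \frac{(-1)^{i+j}\, \det D_M(x)^{(j,i)}}{\det D_M(x)},
\]
where $D_M(x)^{(j,i)}$ is the $(M-1) \times (M-1)$ matrix obtained by deleting row $j$ and column $i$. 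By the symmetry of $D_M(x)$ it suffices to handle $1 \leq j \leq i \leq M$.

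The heart of the argument is to recognise a block upper-triangular structure of the reduced matrix. Because $D_M(x)$ is tridiagonal, every row $r \leq j-1$ is supported on columns $\{r-1, r, r+1\} \subseteq \{1, \ldots, j\}$, and every row $r \geq i+1$ on columns $\{r-1, r, r+1\} \subseteq \{i, \ldots, M\}$. A direct inspection then gives
\[
D_M(x)^{(j,i)} \cong
\begin{pmatrix}
D_{j-1}(x) & * & * \\
0 & A_{j,i}(x) & * \\
0 & 0 & D_{M-i}^{[i]}(x)
\end{pmatrix},
\]
where $A_{j,i}(x)$ is a square middle block of size $i-j$. A short check of which entries of the middle rows survive the row/column deletion shows that $A_{j,i}(x)$ is itself upper triangular with diagonal $\alpha_{j-1}, \alpha_j, \ldots, \alpha_{i-2}$; these are the subdiagonal coefficients of $D_M(x)$, pushed onto the diagonal of the middle block by the upward shift of rows caused by removing row $j$.

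Multiplying the three diagonal block determinants gives
\[
\det D_M(x)^{(j,i)} = (-1)^{M+j-i-1}\, \frac{\alpha_0 \alpha_1 \cdots \alpha_{M-1}}{\alpha_{i-1}}\, w_{j-1}(x)\, w_{M-i}^{[i]}(x),
\]
and the sign calculation $(-1)^{i+j}(-1)^{M+j-i-1}/(-1)^M = -1$ collapses the Cramer prefactor to $-1$, yielding the claimed identity. The case $i < j$ is recovered from the symmetry $D_M(x)^\top = D_M(x)$. The only real obstacle is the precise book-keeping that identifies $A_{j,i}(x)$ as upper triangular with the stated diagonal, which reduces to a small case analysis near the two boundary rows of the middle block; everything else is a routine manipulation of signs and products of the $\alpha_k$'s.
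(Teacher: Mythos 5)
Your proof is correct and follows essentially the same route as the paper: the cofactor/adjugate formula combined with the tridiagonal determinant identity $\det D_n(x)=(-1)^n\alpha_0\cdots\alpha_{n-1}w_n(x)$; the paper merely phrases it via monic polynomials $W_k(x)=\det(-D_k(x))$ and converts to the orthonormal normalisation at the end, and asserts without detail the value of the minor that you justify through the explicit block upper-triangular decomposition. Your sign and index bookkeeping (diagonal $\alpha_{j-1},\ldots,\alpha_{i-2}$ of the middle block, overall sign collapsing to $-1$) checks out, including the degenerate cases $i=j$, $j=1$ and $i=M$.
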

\begin{proof}
	It is well-known that for every $k \geq 0$ we have
	\[
		W_k(x) = \det( -D_k(x) ),
	\]
	where $(W_k : k \geq 0)$ is the corresponding sequence of \emph{monic} orthogonal polynomials.
	Hence, the formula for the inverse matrix in terms of the adjugate matrix implies for 
	$M \geq i \geq j \geq 1$
	\begin{multline*}
		[-D^{-1}_M(x)]_{i, j} = (-1)^{i+j} \frac{W_{j-1}(x) W^{[i]}_{M-i}(x)}{W_M(x)}
		[(-\alpha_{j-1})(-\alpha_{j-1}) \cdots (-\alpha_{i-2})] \\= 
		\frac{W_{j-1}(x) W^{[i]}_{M-i}(x)}{W_M(x)} (\alpha_{j-1} \cdots \alpha_{i-2}).
	\end{multline*}     
	Comparing coefficients at $x^k$ implies
	\[
		W_k(x) = (\alpha_{0} \cdots \alpha_{k-1}) w_k(x).
	\]
	Hence,
	\begin{multline*}
		[-D^{-1}_M(x)]_{i, j} = 
		\frac{(\alpha_0 \cdots \alpha_{j-2}) w_{j-1}(x) (\alpha_i \cdots \alpha_{M-1}) w^{[i]}_{M-i}(x)}
		{(\alpha_0 \cdots \alpha_{M-1}) w_M(x)} 
		(\alpha_{j-1} \cdots \alpha_{i-2}) \\= 
		\frac{w_{j-1}(x) w_{M-i}^{[i]}(x)}{\alpha_{i-1} w_M(x)}.
	\end{multline*}
	For $M \geq j \geq i \geq 1$ the result follows from the fact that the inverse of a symmetric matrix
	is also symmetric.
\end{proof}

The following technical Proposition provides the link to geometric interpretation of the critical case.
\begin{proposition} \label{prop:10}
	Assume that for some $x_0 \in \RR$ and $\gamma \in \{ -1, 1 \}$
	\[
		\prod_{i=0}^{N-1} \hat{\calB}^i(x_0) = \gamma \Id.
	\]
	Then
	\[
		(w_N)'(x_0) = \frac{\alpha_{N-1}}{\alpha_0} (w^{[1]}_{N-2})'(x_0), \quad
		w_{N-1}'(x_0) (w_{N-1}^{[1]})'(x_0) > w_N'(x_0) (w_{N-2}^{[1]})'(x_0).
	\]
\end{proposition}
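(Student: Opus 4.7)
The plan is to first extract the values of the relevant polynomials at $x_0$ from the hypothesis, then obtain the equality by differentiating an earlier identity, and finally establish the strict inequality via sum-of-squares expressions combined with Cauchy--Schwarz.

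First I combine the hypothesis $\prod_{i=0}^{N-1} \hat{\calB}^i(x_0) = \gamma \Id$ with the entry-wise matrix formula of Proposition~\ref{prop:13} (taking $n = N$). Reading off the four entries yields
\[
	w_{N-1}(x_0) = w_{N-1}^{[1]}(x_0) = 0, \qquad w_N(x_0) = \gamma, \qquad w_{N-2}^{[1]}(x_0) = -\frac{\gamma \alpha_0}{\alpha_{N-1}}.
\]
For the equality, I differentiate the Turán-type relation of Proposition~\ref{prop:11} at $i = N-1$, namely $w_{N-1} w_{N-1}^{[1]} - w_N w_{N-2}^{[1]} = \alpha_0/\alpha_{N-1}$, and evaluate at $x_0$. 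The two terms carrying the factor $w_{N-1}(x_0)$ or $w_{N-1}^{[1]}(x_0)$ vanish, so what remains is $w_N'(x_0) w_{N-2}^{[1]}(x_0) = -\gamma (w_{N-2}^{[1]})'(x_0)$, and substituting the value of $w_{N-2}^{[1]}(x_0)$ gives the desired identity $w_N'(x_0) = (\alpha_{N-1}/\alpha_0) (w_{N-2}^{[1]})'(x_0)$.

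For the inequality, I express each relevant derivative at $x_0$ via Proposition~\ref{prop:9}. Applied at $n = N-1$, the vanishing of $w_{N-1}(x_0)$ collapses the sum to a single sum of squares, giving
\[
	w_{N-1}'(x_0) = \frac{w_{N-2}^{[1]}(x_0)}{\alpha_0} \sum_{m=0}^{N-2} w_m(x_0)^2 = -\frac{\gamma}{\alpha_{N-1}} S_1, \qquad S_1 := \sum_{m=0}^{N-1} w_m(x_0)^2 > 0.
\]
To treat $(w_{N-1}^{[1]})'(x_0)$ symmetrically I first observe the cyclic identity that, by $N$-periodicity,
\[
	\prod_{j=0}^{N-1} \hat{\calB}^{j+1}(x_0) = \hat{\calB}^0(x_0) \cdot \bigl( \gamma (\hat{\calB}^0(x_0))^{-1} \bigr) = \gamma \Id,
\]
so the whole argument applies to the shifted sequence $(\alpha_{i+1}, \beta_{i+1})$ as well, and produces the analogue $w_{N-2}^{[2]}(x_0) = -\gamma \alpha_1/\alpha_0$; then Proposition~\ref{prop:9} applied to this shifted sequence yields $(w_{N-1}^{[1]})'(x_0) = -\frac{\gamma}{\alpha_0} S_2$ with $S_2 := \sum_{m=0}^{N-2}(w_m^{[1]}(x_0))^2 > 0$. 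Finally, Proposition~\ref{prop:9} at $n = N$ gives $w_N'(x_0) = -\frac{\gamma}{\alpha_0} T$, where $T := \sum_{m=0}^{N-1} w_{m-1}^{[1]}(x_0) w_m(x_0)$ (with convention $w_{-1}^{[1]} = 0$, so the $m=0$ term drops out).

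Collecting these with the identity already proved gives $w_{N-1}'(x_0)(w_{N-1}^{[1]})'(x_0) = S_1 S_2/(\alpha_0 \alpha_{N-1})$ and $w_N'(x_0)(w_{N-2}^{[1]})'(x_0) = (\alpha_0/\alpha_{N-1})(w_N'(x_0))^2 = T^2/(\alpha_0 \alpha_{N-1})$, so the desired inequality reduces to the Cauchy--Schwarz bound $T^2 < S_1 S_2$. Strictness is the crucial point: equality in Cauchy--Schwarz would force proportionality of the vectors $(w_{m-1}^{[1]}(x_0))_{m=0}^{N-1}$ and $(w_m(x_0))_{m=0}^{N-1}$, but the first vector has a zero entry at $m=0$ while $w_0(x_0) = 1$, so the proportionality constant would have to be $0$, which then contradicts $w_0^{[1]}(x_0) = 1$. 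I expect the main technical point to be the careful index bookkeeping in the shifted-sequence application of Propositions~\ref{prop:13} and~\ref{prop:9} used to obtain $w_{N-2}^{[2]}(x_0)$ and $(w_{N-1}^{[1]})'(x_0)$; once all three derivative identities are in place, the assembly into the final inequality is routine.
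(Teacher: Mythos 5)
Your proof is correct and follows essentially the same route as the paper: reading off the values at $x_0$ from Proposition~\ref{prop:13}, differentiating the Turán identity of Proposition~\ref{prop:11} for the equality, and combining the sum-of-squares formulas of Proposition~\ref{prop:9} with Cauchy--Schwarz for the inequality. The only (minor and valid) difference is in how strictness is obtained: you invoke the equality case of Cauchy--Schwarz and rule out proportionality via the entries $w_{-1}^{[1]}(x_0)=0$, $w_0(x_0)=w_0^{[1]}(x_0)=1$, whereas the paper drops the positive term $(w_0(x_0))^2=1$ from one of the sums; your variant is if anything a bit cleaner and handles small $N$ uniformly.
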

\begin{proof}	
	First of all, 
	\[
		\prod_{i=0}^{N-1} \hat{\calB}^{i}(x_0) = \gamma \Id \quad \Rightarrow  \quad
		\prod_{i=k}^{k+N-1} \hat{\calB}^{i}(x_0) = \gamma \Id \quad (k \geq 0).
	\]
	Indeed, it follows from the fact that these matrices are conjugated to each other. Hence,
	\begin{equation} \label{eq:11}
		-\frac{\alpha^{[k]}_{N-1}}{\alpha^{[k]}_0} w_{N-2}^{[k+1]}(x_0) = w_N^{[k]}(x_0) = \gamma, \quad 
		w_{N-1}^{[k]}(x_0) = w_{N-1}^{[k+1]}(x_0) = 0, 
		\quad (k \geq 0).
	\end{equation}
	
	Proposition~\ref{prop:11} for $i=N-1$ and for sequences $\alpha^{[k]}$ and $\beta^{[k]}$ gives
	\[
		 w^{[k]}_{N-1}(x) w^{[k+1]}_{N-1}(x) - w^{[k]}_N(x) w^{[k+1]}_{N-2}(x) = 
		 \frac{\alpha^{[k]}_0}{\alpha^{[k]}_{N-1}}, \quad (k \geq 0).
	\]
	Taking derivatives at $x=x_0$ and applying \eqref{eq:11} gives
	\[ 
		0 = (w^{[k]}_N)'(x_0) w^{[k+1]}_{N-2}(x_0) + w^{[k]}_N(x_0) (w^{[k+1]}_{N-2})'(x_0) = 
		w^{[k]}_N(x_0) 
		\left[ -\frac{\alpha^{[k]}_0}{\alpha^{[k]}_{N-1}} (w^{[k]}_N)'(x_0) + 
		(w^{[k+1]}_{N-2})'(x_0) \right].
	\]
	Hence,
	\begin{equation} \label{eq:12}
		(w^{[k]}_N)'(x_0) = \frac{\alpha^{[k]}_{N-1}}{\alpha^{[k]}_0} (w^{[k+1]}_{N-2})'(x_0) \quad 
		(k \geq 0)
	\end{equation}
	and by taking $k=0$ we obtain the first desired identity.
	
	According to Proposition~\ref{prop:9}
	\[
		w_{N-1}'(x_0) = \frac{w_{N-2}^{[1]}(0)}{\alpha_{0}} \sum_{m=0}^{N-2} |w_m(x_0)|^2, \quad
		(w_{N-1}^{[1]})'(x_0) = \frac{w_{N-2}^{[2]}(0)}{\alpha_{1}} \sum_{m=0}^{N-2} |w^{[1]}_m(x_0)|^2.
	\]
	By $\gamma^2=1$ and \eqref{eq:11} for $k \in \{0,1\}$, we obtain
	\begin{equation} \label{eq:9}
		\frac{\alpha_{N-1}}{\alpha_0} w_{N-1}'(x_0) (w_{N-1}^{[1]})'(x_0) = 
		\frac{1}{\alpha_0^2} \sum_{m=0}^{N-2} |w_m(x_0)|^2 \sum_{m=0}^{N-2} |w^{[1]}_m(x_0)|^2.
	\end{equation}
	
	Proposition~\ref{prop:9} and \eqref{eq:11} implies
	\[
		w_N'(x_0) = \frac{1}{\alpha_0} \sum_{m=0}^{N-1} w_{m-1}^{[1]}(x_0) w_m(x_0) = 
		\frac{1}{\alpha_0} \sum_{m=1}^{N-2} w_{m-1}^{[1]}(x_0) w_m(x_0).
	\]
	By Schwarz inequality
	\[
		|w_N'(x_0)|^2 \leq \frac{1}{\alpha_0^2} \sum_{m=0}^{N-3} (w_{m}^{[1]}(x_0))^2 
		\sum_{m=1}^{N-2} (w_{m}(x_0))^2.
	\]
	Hence, using $w_{0}(x_0) = 1 > 0$, \eqref{eq:11} for $k=0$ and \eqref{eq:9} we get
	\[
		\frac{\alpha_{N-1}}{\alpha_0} w_{N-1}'(x_0) (w_{N-1}^{[1]})'(x_0) - |w_N'(x_0)|^2 > 0
	\]
	and by \eqref{eq:12} for $k=0$ the result follows.
\end{proof}

\section{Self-adjointness} \label{sec:selfAdj}

Theorem~\ref{thm:A} is a consequence of the following result.
\begin{theorem}
	Let $\alpha = (\alpha_n : n \in \mathbb{Z})$ and $\beta = (\beta_n : n \in \mathbb{Z})$ be
	real $N$-periodic sequences ($N \geq 2,\ \alpha_n > 0$). Assume
	\[
		\calF(0) := \prod_{i=0}^{N-1} 
		\begin{pmatrix}
			0 & 1 \\
			-\frac{\alpha_{i-1}}{\alpha_i} & -\frac{\beta_i}{\alpha_i}
		\end{pmatrix} = \gamma \Id.
	\]
	Define
	\[
		a_{Nk+i} = \alpha_i \tilde{a}_k, \quad 
		b_{Nk+i} = \beta_i \tilde{a}_k \quad 
		(i = 0, 1, \ldots, N-1,\ k \geq 0)
	\]
	for a sequence $( \tilde{a}_k : k \geq 0)$. Then $|\gamma| = 1$ and
	\[
		p_{Nk+i}(0) = \gamma^k w_i(0),
	\]
	where $(w_n : n \in \mathbb{N})$ is the sequence of orthonormal polynomials associated with $\alpha$ and 
	$\beta$. In particular, the operator $A$ is \emph{always} self-adjoint and $0 \notin \sigmaP{A}$.
\end{theorem}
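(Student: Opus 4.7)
My plan is to derive the explicit formula $p_{Nk+i}(0)=\gamma^k w_i(0)$ and then read off self-adjointness and $0\notin\sigmaP{A}$ from it. The whole argument rests on the identity $w_{N-1}(0)=0$, which is precisely what neutralises the $\tilde{a}$-dependence of the transfer matrices at the block boundaries $n=Nk$.

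First I would verify $|\gamma|=1$. Each factor of $\calF(0)$ has determinant $\alpha_{i-1}/\alpha_i$, so the $N$-periodicity of $\alpha$ gives $\det\calF(0)=\alpha_{-1}/\alpha_{N-1}=1$; combined with $\calF(0)=\gamma\Id$ this forces $\gamma^{2}=1$. Applying Proposition~\ref{prop:13} with $n=N$ at $x=0$ to the periodic sequences $\alpha,\beta$ and equating with $\gamma\Id$ yields the identities
\[
  w_{N-1}(0)=0,\quad w_N(0)=\gamma,\quad w^{[1]}_{N-1}(0)=0,\quad w^{[1]}_{N-2}(0)=-\alpha_0\gamma/\alpha_{N-1},
\]
of which the first is the key one.

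Next I would prove $p_{Nk+i}(0)=\gamma^k w_i(0)$ for $0\le i\le N-1$ by induction on $k$. Whenever $n=Nk+i$ with $1\le i\le N-1$, both $a_{n-1}$ and $a_n$ lie in the same block, so the ratios $a_{n-1}/a_n$ and $b_n/a_n$ equal $\alpha_{i-1}/\alpha_i$ and $\beta_i/\alpha_i$, and the transfer matrix $B_{Nk+i}(0)$ coincides with $\hat{\calB}^{i}(0)$; the problematic ratio $a_{Nk-1}/a_{Nk}=\alpha_{N-1}\tilde{a}_{k-1}/(\alpha_0\tilde{a}_k)$ only shows up at $n=Nk$. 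The inductive step exploits that the hypothesis yields $p_{Nk+N-1}(0)=\gamma^k w_{N-1}(0)=0$. The recurrence at index $Nk+N-1$, which lives entirely inside the $k$-th block, propagates $(p_{Nk+N-2}(0),0)=(\gamma^k w_{N-2}(0),0)$ to $(0,-\alpha_{N-2}\gamma^k w_{N-2}(0)/\alpha_{N-1})$; the second entry equals $\gamma^k w_N(0)=\gamma^{k+1}$, giving $p_{N(k+1)}(0)=\gamma^{k+1}$. The recurrence at index $N(k+1)$ is the one that carries $\tilde{a}$, but its $\tilde{a}$-dependent entry multiplies $p_{N(k+1)-1}(0)=0$ and therefore drops out, leaving $p_{N(k+1)+1}(0)=-\beta_0\gamma^{k+1}/\alpha_0=\gamma^{k+1}w_1(0)$. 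The in-block transfer matrices $\hat{\calB}^{i}(0)$ for $2\le i\le N-1$ then propagate the formula through the whole $(k+1)$-st block. The base case $k=0$ is immediate: $p_0(0)=1=w_0(0)$, $p_1(0)=-\beta_0/\alpha_0=w_1(0)$, and the in-block transfer matrices take over for $2\le i\le N-1$.

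From the formula, $(p_n(0))$ is $N$-periodic (when $\gamma=1$) or $2N$-periodic (when $\gamma=-1$) and satisfies $|p_{Nk}(0)|=1$ for every $k$, so $(p_n(0))\notin\ell^2$. By the criterion recalled right after the statement of Theorem~\ref{thm:A}, $A$ is self-adjoint. Moreover, any solution $u$ of $\calA u=0$ satisfies $u_1=-b_0u_0/a_0=u_0p_1(0)$ from the zeroth row and hence $u_n=u_0p_n(0)$ for all $n$; square-summability then forces $u_0=0$, so $0\notin\sigmaP{A}$. The main obstacle in the argument is the book-keeping at the block boundaries, which is handled by the single identity $w_{N-1}(0)=0$.
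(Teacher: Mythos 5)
Your proof is correct and follows essentially the same route as the paper: an induction over blocks in which the in-block transfer matrices coincide with the periodic ones $\hat{\calB}^i(0)$, and the $\tilde{a}$-dependent entry of the boundary transfer matrix is neutralised because it multiplies $p_{Nk-1}(0)=\gamma^k w_{N-1}(0)=0$ (the paper packages this as an explicit matrix computation with $\calF(0)$ times a correction factor acting on $(0,1)^T$, while you extract the identity $w_{N-1}(0)=0$ from Proposition~\ref{prop:13} and run the scalar recurrence, but the mechanism is identical). The concluding steps, $\det\calF(0)=1\Rightarrow\gamma^2=1$, periodicity of $(p_n(0))$, the limit-point criterion for self-adjointness, and $0\notin\sigmaP{A}$, also match the paper's argument.
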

\begin{proof}
	For simplicity let us denote $p_n \equiv p_n(0)$. Let us show that $p_{Nk-1} = 0$ and 
	$p_{Nk} = \gamma^k$. 
	For $k=0$ it holds. For $k \geq 0$ by the recurrence relation
	\[
		\begin{pmatrix}
			p_{N(k+1)-1} \\
			p_{N(k+1)}
		\end{pmatrix}
		=
		\calF(0)
		\begin{pmatrix}
			0 & 1 \\
			-\frac{\alpha_{N-1}}{\alpha_0} & -\frac{\beta_0}{\alpha_0}
		\end{pmatrix}^{-1}
		\begin{pmatrix}
			0 & 1 \\
			-\frac{\alpha_{N-1}}{\alpha_0} \frac{\tilde{a}_{k-1}}{\tilde{a}_k} & -\frac{\beta_0}{\alpha_0}
		\end{pmatrix}
		\begin{pmatrix}
			p_{Nk - 1} \\
			p_{Nk}
		\end{pmatrix}.
	\]
	By the induction hypothesis and the fact that $\calF(0) = \gamma \Id$ it is equal to
	\begin{multline*}
		\gamma^{k+1}
		\begin{pmatrix}
			0 & 1 \\
			-\frac{\alpha_{N-1}}{\alpha_0} & -\frac{\beta_0}{\alpha_0}
		\end{pmatrix}^{-1}
		\begin{pmatrix}
			0 & 1 \\
			-\frac{\alpha_{N-1}}{\alpha_0} \frac{\tilde{a}_{k-1}}{\tilde{a}_k} & -\frac{\beta_0}{\alpha_0}
		\end{pmatrix}
		\begin{pmatrix}
			0 \\
			1
		\end{pmatrix} \\ = 
		\gamma^{k+1} \frac{\alpha_0}{\alpha_{N-1}}
		\begin{pmatrix}
			-\frac{\beta_0}{\alpha_0} & -1 \\
			\frac{\alpha_{N-1}}{\alpha_0} & 0
		\end{pmatrix}
		\begin{pmatrix}
			1 \\
			-\frac{\beta_0}{\alpha_0}
		\end{pmatrix}
		= \gamma^{k+1}
		\begin{pmatrix}
			0 \\
			1
		\end{pmatrix}.
	\end{multline*}
	What was to be shown.
	
	Assume now that $0 < j < N$. By the recurrence relation
	\[
		\begin{pmatrix}
			p_{Nk+j-1} \\
			p_{Nk+j} 
		\end{pmatrix} =
		\prod_{i=1}^{j-1} B_i(0) \cdot
		\begin{pmatrix}
			0 & 1 \\
			-\frac{\alpha_{N-1}}{\alpha_0} \frac{\tilde{a}_{k-1}}{\tilde{a}_k} & -\frac{\beta_0}{\alpha_0}
		\end{pmatrix}
		\begin{pmatrix}
			p_{Nk-1} \\
			p_{Nk}
		\end{pmatrix}.
	\]
	The previous case implies
	\[
		\begin{pmatrix}
			p_{Nk+j-1} \\
			p_{Nk+j} 
		\end{pmatrix} = 
		\gamma^k \prod_{i=1}^{j-1} B_i(0) \cdot
		\begin{pmatrix}
			0 & 1 \\
			-\frac{\alpha_{N-1}}{\alpha_0} \frac{\tilde{a}_{k-1}}{\tilde{a}_k} & -\frac{\beta_0}{\alpha_0}
		\end{pmatrix}
		\begin{pmatrix}
			0 \\ 
			1
		\end{pmatrix}
		= \gamma^k \prod_{i=1}^{j-1} B_i(0) \cdot 
		\begin{pmatrix}
			p_0 \\
			p_1
		\end{pmatrix}
		= \gamma^k
		\begin{pmatrix}
			p_{j-1} \\ 
			p_j
		\end{pmatrix}.
	\]
	The equality $p_j(0) = w_j(0)$ for $0 \leq j < N$ follows by $p_0(0) = w_0(0), p_1(0) = w_1(0)$ and 
	the fact that both sequences satisfy the same recurrence relation.
	         
	Let us notice that from the periodicity of $\alpha$
	\[
		1 = \det{\calF} = \gamma^2.
	\]
	Hence $|\gamma|=1$, and consequently the sequence $(p_n(0) : n \geq 0)$ is $2N$-periodic, so
	it cannot belong to $\ell^2$. Therefore, according to \cite[Theorem 3]{Simon1998} the operator $A$ is 
	self-adjoint and $0 \notin \sigmaP{A}$.
\end{proof}
       
\section{The formula for the resolvent} \label{sec:formRes}
\subsection{Definitions}
Let us define matrices
\begin{align*}
	D_M(x) &= \tridiag \left(\alpha_{i}, \beta_{i} - x, \alpha_{i} : 0 \leq i < M \right), \\
	E_{i,j} &= \delta_1(j) \delta_N(i).
\end{align*}

Jacobi matrix $\calA$ associated with the sequences defined in \eqref{eq:58} can be equivalently expressed
in the following block form
\begin{equation*}
	\calA =
	\begin{pmatrix}
		\tilde{a}_0 D_N(0) & \alpha_{N-1} \tilde{a}_0 E & 0 & 0 &\ldots \\
		\alpha_{N-1} \tilde{a}_0 E^* & \tilde{a}_1 D_N(0) & \alpha_{N-1} \tilde{a}_1 E & 0 & \ldots \\
		0   & \alpha_{N-1} \tilde{a}_1 E^* & \tilde{a}_2 D_N(0) & \alpha_{N-1} \tilde{a}_2 E & \ldots \\
		0   & 0 & \alpha_{N-1} \tilde{a}_2 E^* & \tilde{a}_3 D_N(0)   &  \\
		\vdots & \vdots & \vdots  &  & \ddots
	\end{pmatrix}.	
\end{equation*}

Let us define matrix $\calB$ by
\[
	\calB_{i,j} = 
	\begin{cases}
		\frac{\gamma^{j-i}}{\tilde{a}_j} F & \text{for } j > i, \\
		\frac{1}{\tilde{a}_i} D_N^{-1}(0) & \text{for } j=i, \\
		[\calB_{j,i}]^* & \text{otherwise}.
	\end{cases},
\]
where
\[
	\gamma = w_N(0)
\]
and
\[
	F_{i,j} = \frac{1}{\gamma} [D_{2N}^{-1}(0)]_{i, N+j}, \quad (i,j \in \{1, 2, \ldots, N\}).
\]
Proposition~\ref{prop:15} implies the formula
\begin{equation} \label{eq:21}
	F_{i,j} = -\frac{w_{i-1}(0) w_{N-j}^{[j]}(0)}{\gamma \alpha_{j-1}}.
\end{equation}

\subsection{Properties}
The aim of this section is to prove the following result.
\begin{theorem} \label{thm:1}
	Let the assumptions of Theorem~\ref{thm:A} be satisfied. Then we have the equality of matrices
	\[
		\calA \calB = \calB \calA = \Id.
	\]
\end{theorem}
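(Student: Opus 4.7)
The plan is to verify the block identity $\calA \calB = \Id$ directly, block-entry by block-entry, in the block-tridiagonal form of $\calA$ displayed just before the theorem; the companion identity $\calB \calA = \Id$ will then follow by taking adjoints, since $\calA = \calA^*$ and $\calB_{j,i} = \calB_{i,j}^*$ by construction.

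The argument reduces to three $N \times N$ matrix identities involving $D_N(0)$, $E$ and $F$, each easily checked from the explicit formula \eqref{eq:21} and Proposition~\ref{prop:15} together with the recurrence for $w_n$: (i) $E^* F = 0$ and $E F^* = 0$; (ii) $E D_N^{-1}(0) = E F$; and (iii) $\gamma D_N(0) F + \alpha_{N-1} E F = 0$. For (i), since $E$ has its unique nonzero entry at position $(N,1)$, both left-hand sides are supported in a single row with entries carrying respectively the factor $w_{N-1}(0)$ (from $F_{N,\cdot}$) and $w_{N-1}^{[1]}(0)$ (from $F_{\cdot,1}$), which vanish by \eqref{eq:11} (or equivalently by the proof of Theorem~\ref{thm:A} applied to $\tilde a \equiv 1$). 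For (ii), both matrices are supported in row $N$, with common $(N, l)$-entry $-w_{N-l}^{[l]}(0)/(\gamma \alpha_{l-1})$, by Proposition~\ref{prop:15} (using symmetry of $D_N^{-1}(0)$) and \eqref{eq:21}. For (iii), the $l$-th column of $F$ is, by \eqref{eq:21}, a scalar multiple of the vector $\mathbf{w} := (w_{k-1}(0))_{k=1}^N$; the three-term recurrence for $w_n(0)$, together with $w_0(0) = 1$ and the boundary value $w_N(0) = \gamma$, forces $D_N(0)\mathbf{w}$ to be concentrated at $k = N$ with value $-\alpha_{N-1}\gamma$, while $E\mathbf{w}$ is concentrated at $k = N$ with value $w_0(0) = 1$. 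Identity (iii) then collapses to $-\gamma^2 + 1 = 0$, i.e.\ $\gamma^2 = 1$.

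With (i)--(iii) in hand, the block check of $\calA \calB = \Id$ becomes bookkeeping, as block tridiagonality of $\calA$ leaves at most three contributions to each $[\calA\calB]_{i,j}$. For $j = i$ the two cross terms are killed by (i), leaving $\tilde a_i D_N(0) \cdot \tilde a_i^{-1} D_N^{-1}(0) = \Id$. For $j = i+1$ the $E^*F$-contribution is killed by (i), and the remaining two terms assemble into $(\tilde a_i/\tilde a_{i+1})\bigl(\gamma D_N(0) F + \alpha_{N-1} E D_N^{-1}(0)\bigr)$, which becomes the left-hand side of (iii) after substituting (ii). For $j \geq i+2$, after (i) and using $\gamma^2 = 1$, the three surviving terms factor as $(\gamma^{j-i-1}\tilde a_i/\tilde a_j)\bigl(\gamma D_N(0) F + \alpha_{N-1} E F\bigr)$, which vanishes by (iii). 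The boundary case $i = 0$ is identical, merely with the $\calA_{0,-1}$ term absent. The load-bearing step is (iii); in hindsight it is elementary, but it encapsulates how the hypothesis $\calF(0) = \gamma \Id$, through the facts $w_N(0) = \gamma$ and $\gamma^2 = 1$, makes the proposed $\calB$ a genuine inverse of $\calA$.
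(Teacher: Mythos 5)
Your proof is correct and shares the paper's overall architecture: reduce $\calA\calB=\Id$ (resp.\ $\calB\calA=\Id$ in the paper) to a handful of $N\times N$ identities among $D_N(0)$, $E$ and $F$, then do the block-tridiagonal bookkeeping and dispose of the other product by taking adjoints. The genuine difference is in how the load-bearing identity is obtained. Because you multiply in the order $\calA\calB$, your identities act on $F$ from the left ($E^*F=0$, $ED_N^{-1}(0)=EF$, $\gamma D_N(0)F+\alpha_{N-1}EF=0$), which are the transposes of the paper's ($FE^*=0$, $D_N^{-1}(0)E=FE$, $\gamma FD_N(0)+\alpha_{N-1}FE=0$); both sets are true and play symmetric roles. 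The paper proves its key identity by writing $D_{2N}^{-1}(0)D_{2N}(0)=\Id$ in $2\times2$ block form, identifying the $(1,1)$ block of $D_{2N}^{-1}(0)$ with $D_N^{-1}(0)$ via $FE^*=0$, and reading off the $(1,2)$ block equation. Your derivation is more elementary and, I would say, more illuminating: you observe from \eqref{eq:21} that every column of $F$ is proportional to $\mathbf{w}=(w_{k-1}(0))_{k=1}^N$, apply the three-term recurrence to see that $D_N(0)\mathbf{w}$ is supported at the last coordinate with value $-\alpha_{N-1}w_N(0)=-\alpha_{N-1}\gamma$, and reduce the whole identity to $\gamma^2=1$. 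This makes explicit exactly where the hypothesis $\calF(0)=\gamma\Id$ (through $w_{N-1}(0)=w_{N-1}^{[1]}(0)=0$, $w_N(0)=\gamma$, $\gamma^2=1$) enters, and it avoids any appeal to the $2N\times 2N$ matrix. Your verification of the remaining identities against Proposition~\ref{prop:15} and \eqref{eq:21} checks out, and your block case analysis ($j=i$, $j=i+1$, $j\geq i+2$, plus the boundary block) matches the paper's; like the paper, you treat only one triangular half explicitly and appeal to symmetry for the rest, which is the same (acceptable) level of detail.
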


We need the following algebraic lemma.
\begin{lemma} \label{lem:1}
	We have the following identities.
	\begin{align}
		F E^* &= 0, \label{lem:1:eq:1} \\
		E^* F &= 0, \label{lem:1:eq:2} \\
		D_N^{-1}(0) E - F E &= 0, \label{lem:1:eq:4} \\
		\gamma F D_N(0) + \alpha_{N-1} F E &= 0. \label{lem:1:eq:3}
	\end{align}
\end{lemma}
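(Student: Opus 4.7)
The plan is to deduce all four identities from three ingredients already available in the paper: (i) from the proof of Theorem \ref{thm:A}, the assumption $\calF(0) = \gamma \Id$ forces $w_{N-1}(0) = 0$ and $w_N(0) = \gamma$; (ii) the conjugacy observation at the start of the proof of Proposition \ref{prop:10} shows that $\prod_{i=k}^{k+N-1} \hat{\calB}^i(0) = \gamma \Id$ for every $k \geq 0$, so $w^{[k]}_{N-1}(0) = 0$ and $w^{[k]}_N(0) = \gamma$ for every $k$, and every solution of the zero-energy recurrence is $N$-quasi-periodic with factor $\gamma$ (in particular $w_{2N}(0) = \gamma^2 = 1$); (iii) Proposition \ref{prop:15} provides a closed formula for the entries of $D_M^{-1}(0)$ in terms of the shifted orthonormal polynomials.

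For \eqref{lem:1:eq:1} and \eqref{lem:1:eq:2} I will just exploit the rank-one-like factorization $F_{i,j} = -w_{i-1}(0) w^{[j]}_{N-j}(0)/(\gamma \alpha_{j-1})$ together with $E = e_N e_1^T$ and $E^* = e_1 e_N^T$. The product $FE^*$ has only its $N$-th column possibly nonzero, and that column is $(F_{i,1})_{i=1}^N$, which vanishes by (ii) since $w^{[1]}_{N-1}(0) = 0$. Symmetrically, $E^* F$ has only its first row possibly nonzero, namely $(F_{N,k})_{k=1}^N$, which vanishes by (i). For \eqref{lem:1:eq:4} it is enough to compare the last columns of $D_N^{-1}(0)$ and $F$: Proposition \ref{prop:15} combined with the symmetry of $D_N^{-1}(0)$ gives $[D_N^{-1}(0)]_{i,N} = [D_N^{-1}(0)]_{N,i} = -w_{i-1}(0) w^{[N]}_0(0)/(\alpha_{N-1} w_N(0)) = -w_{i-1}(0)/(\gamma \alpha_{N-1}) = F_{i,N}$, and multiplying by $E$ on the right yields the identity.

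The real work is \eqref{lem:1:eq:3}, which I plan to obtain from a block-matrix trick. By the $N$-periodicity of $\alpha$ and $\beta$ one has
\[
	D_{2N}(0) = \begin{pmatrix} D_N(0) & \alpha_{N-1} E \\ \alpha_{N-1} E^* & D_N(0) \end{pmatrix},
\]
and I will write $D_{2N}^{-1}(0) = \begin{pmatrix} P & Q \\ Q^* & R \end{pmatrix}$ in the same block form. By the very definition of $F$, the off-diagonal block is $Q = \gamma F$. The top-right block of $D_{2N}^{-1}(0) D_{2N}(0) = \Id$ reads $Q D_N(0) + \alpha_{N-1} PE = 0$, so it only remains to check that $PE = FE$, i.e.\ that the last columns of $P$ and $F$ agree. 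This is one application of Proposition \ref{prop:15}: for $1 \leq i \leq N$, using symmetry, $P_{i,N} = [D_{2N}^{-1}(0)]_{N,i} = -w_{i-1}(0) w^{[N]}_N(0)/(\alpha_{N-1} w_{2N}(0)) = -\gamma w_{i-1}(0)/\alpha_{N-1}$, while $F_{i,N} = -w_{i-1}(0)/(\gamma \alpha_{N-1})$, and these agree because $\gamma^2 = 1$. Substituting $Q = \gamma F$ and $PE = FE$ into the block equation then delivers \eqref{lem:1:eq:3}.

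The main obstacle will be \eqref{lem:1:eq:3}: a direct entry-by-entry verification leads to a three-term relation among $w^{[k-1]}_{N-k+1}(0)$, $w^{[k]}_{N-k}(0)$, and $w^{[k+1]}_{N-k-1}(0)$ that is not transparently true from the algebraic identities collected in Section \ref{sec:algIdent}. Passing through the block inverse of $D_{2N}(0)$ sidesteps this difficulty by reducing everything to the definitional $Q = \gamma F$ and to the single symmetric entry computation $PE = FE$; both inputs rest only on the quasi-periodicity at zero supplied by (ii).
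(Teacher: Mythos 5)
Your proof is correct and takes essentially the same route as the paper: the first three identities are verified entrywise from the formula of Proposition~\ref{prop:15} together with the vanishing of $w_{N-1}(0)$ and $w_{N-1}^{[1]}(0)$, and the fourth comes from the block decomposition of $D_{2N}(0)$ and its inverse. The only (immaterial) difference is in \eqref{lem:1:eq:3}: the paper identifies the whole top-left block of $D_{2N}^{-1}(0)$ as $D_N^{-1}(0)$ via the top-left block equation and $F E^* = 0$, then invokes \eqref{lem:1:eq:4}, whereas you compute only the last column of that block directly from Proposition~\ref{prop:15} using $w_N^{[N]}(0)=\gamma$, $w_{2N}(0)=\gamma^2=1$.
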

\begin{proof}
	We have
	\[
		(F E^*)_{i,j} = \sum_{k=1}^N F_{i,k} E^*_{k,j} = 
		\sum_{k=1}^N F_{i,k} \delta_1(k) \delta_N(j) = F_{i,1} \delta_N(j) = 
		-\frac{w_{i-1}(0) w_{N-1}^{[1]}(0)}{\gamma \alpha_{0}} \delta_N(j) = 0
	\]	
	and \eqref{lem:1:eq:1} follows. Similarly,
	\[
		(E^* F)_{i,j} = \sum_{k=1}^N E^*_{i,k} F_{k,j} = 
		\sum_{k=1}^N \delta_1(i) \delta_N(k) F_{k,j} = \delta_1(i) F_{N, j} =
		-\frac{w_{N-1}(0) w^{[j]}_{N-j}(0)}{\gamma \alpha_{j-1}} \delta_1(i) = 0,
	\]
	which shows \eqref{lem:1:eq:2}
	
	One has
	\[
		(F E)_{i,j} = \sum_{k=1}^N F_{i,k} E_{k,j} = 
		\sum_{k=1}^N F_{i,k} \delta_1(j) \delta_N(k) = \delta_1(j) F_{i,N} = 
		- \delta_1(j) \frac{w_{i-1}(0)}{\gamma \alpha_{N-1}}.
	\]
	Similarly,
	\[
		(D_N^{-1}(0) E)_{i,j} = \delta_1(j) [D_N^{-1}(0)]_{i,N} = 
		- \delta_1(j) \frac{w_{i-1}(0)}{\gamma \alpha_{N-1}}.
	\]
	Hence, we have shown \eqref{lem:1:eq:4}.
	
	Observe
	\[
		D_{2N}^{-1}(0) D_{2N}(0) =
		\begin{pmatrix}
			G_1 & \gamma F \\
			\gamma F^* & G_2
		\end{pmatrix}
		\begin{pmatrix}
			D_N(0) & \alpha_{N-1} E \\
			\alpha_{N-1} E^* & D_N(0)
		\end{pmatrix}
		= 
		\begin{pmatrix}
			\Id & 0 \\
			0 & \Id
		\end{pmatrix}
	\]	
	for some matrices $G_1, G_2 \in M_N(\RR)$. It implies
	\[
		G_1 D_N(0) + \gamma \alpha_{N-1} F E^* = \Id.
	\]
	Since $F E^* = 0$, we obtain $G_1 = D_N^{-1}(0)$. Consequently,
	\[
		\alpha_{N-1} D_N^{-1}(0) E + \gamma F D_N(0) = 0
	\]
	which by \eqref{lem:1:eq:4} implies \eqref{lem:1:eq:3}. The proof is complete.
\end{proof}

We are ready to prove Theorem~\ref{thm:1}.
\begin{proof}[Proof of Theorem~\ref{thm:1}]
	By the symmetry of matrices $\calA$ and $\calB$ it is enough to show $\calB \calA = \Id$. We have
	\begin{multline*}
		(\calB \calA)_{i,j} = \sum_{k=0}^\infty \calB_{i,k} \calA_{k,j} = 
		\calB_{i, j-1} \calA_{j-1,j} + \calB_{i, j} \calA_{j,j} + \calB_{i, j+1} \calA_{j+1,j} \\=
		\calB_{i, j-1} \alpha_{N-1} \tilde{a}_{j-1} E + \calB_{i, j} \tilde{a}_j D_N(0) + 
		\calB_{i, j+1} \alpha_{N-1} \tilde{a}_j E^*.
	\end{multline*}
	
	Again, by the symmetry it is enough to prove the thesis for $j \geq i$.
	
	Consider the case $i=j$. Then
	\[
		(\calB \calA)_{i,i} = \frac{\gamma}{\tilde{a}_i} F^* \alpha_{N-1} \tilde{a}_{i-1} E +
		\frac{1}{\tilde{a}_i} D_N^{-1}(0) \tilde{a}_i D_N(0) + 
		\frac{\gamma}{\tilde{a}_{i+1}} F \alpha_{N-1} \tilde{a}_i E^*,
	\]
	what from the formulas \eqref{lem:1:eq:1} and \eqref{lem:1:eq:2} is equal to $\Id$.
	
	Assume now that $j=i+1$. Then
	\begin{multline*}
		(\calB \calA)_{i,i+1} = \frac{1}{\tilde{a}_i} D^{-1}_N(0) \alpha_{N-1} \tilde{a}_i E + 
		\frac{\gamma}{\tilde{a}_{i+1}} F \tilde{a}_{i+1} D_N(0) + 
		\frac{\gamma^2}{\tilde{a}_{i+2}} F \alpha_{N-1}\tilde{a}_{i+1} E^* \\= 
		\alpha_{N-1} D^{-1}_N(0) E + \gamma F D_N(0),
	\end{multline*}
	what from the formulas \eqref{lem:1:eq:3} and \eqref{lem:1:eq:4} is equal to $0$.
	
	Finally, for $j > i+1$ we have
	\begin{multline*}
		(\calB \calA)_{i, j} = \frac{\gamma^{j-i-1}}{\tilde{a}_{j-1}} F \alpha_{N-1} \tilde{a}_{j-1} E + 
		\frac{\gamma^{j-i}}{\tilde{a}_j} F \tilde{a}_j D_N(0) + 
		\frac{\gamma^{j-i+1}}{\tilde{a}_{j+1}} F \alpha_{N-1}\tilde{a}_j E^* \\= 
		\gamma^{j-i-1} (\alpha_{N-1} F E + \gamma F D_N(0))
	\end{multline*}
	what from the formulas \eqref{lem:1:eq:3} and \eqref{lem:1:eq:4} is equal to $0$. The proof is complete.
\end{proof}

\section{Spectral phase transition} \label{sec:spectPhaseTrans}
We define operator $B$ as the restriction of $\calB$ to $\ell^2(\NN; \CC^N)$, i.e. 
\[
	B x = \calB x, \quad x \in \Dom(B) = \{ y \in \ell^2(\NN; \CC^N) : \calB y \in \ell^2(\NN; \CC^N) \},
\]
where
\[
	\sprod{x}{y}_{\ell^2(\NN; \CC^N)} = \sum_{n=0}^\infty \sprod{x_n}{y_n}_{\CC^N}, \quad 
	\ell^2(\NN; \CC^N) = \{ x \in (\CC^N)^\NN \colon \sprod{x}{x}_{\ell^2(\NN; \CC^N)} < \infty \}.
\]

Let us define $U : \ell^2(\NN) \rightarrow \ell^2(\NN; \CC^N)$ by the formula
\[
	(U x)_k = (x_{kN}, x_{kN+1}, \ldots, x_{kN+N-1}).
\]
It is clear that $U$ is a unitary operator.

Theorem~\ref{thm:1} implies that $A^{-1}$ exists if and only if operator $B$ is bounded. In such a case
we have
\[
	U B U^{-1} = A^{-1}.
\]
Hence, Theorem~\ref{thm:B} is a consequence of the following result. Its proof is inspired by
\cite[Theorem 2.1 and 2.2]{Sahbani2008} and \cite[Theorem 2.3]{DombrowskiPedersen1995}.
\begin{theorem} \label{thm:2}
	Let the assumptions of Theorem~\ref{thm:A} be satisfied.
	If
	\begin{enumerate}[(a)]
		\item $\begin{aligned}[b]
			\lim_{k \rightarrow \infty} \frac{k}{\tilde{a}_k} = \infty
		\end{aligned}$, 
		then $B$ is unbounded;
		
		\item the sequence 
		$\begin{aligned}[b]
			\left( \frac{k}{\tilde{a}_k} : k \geq 1 \right)
		\end{aligned}$
		is bounded, then $B$ is bounded;
		
		\item $\begin{aligned}[b]
			\lim_{k \rightarrow \infty} \frac{k}{\tilde{a}_k} = 0
		\end{aligned}$,
		then $B$ is compact.
	\end{enumerate}
\end{theorem}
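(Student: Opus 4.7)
The plan is to exploit the block structure of $\calB$ together with a rank-one factorization of $F$, reducing the problem to a scalar Hardy-type operator on $\ell^2(\NN)$ for which the three regimes follow from the classical discrete Hardy inequality. Write $\calB = D + U + U^*$, where $D$ is block diagonal with $D_{i,i} = \tilde{a}_i^{-1} D_N^{-1}(0)$ and $U$ is strictly upper block-triangular with $U_{i,j} = \gamma^{j-i} \tilde{a}_j^{-1} F$ for $j > i$. Formula~\eqref{eq:21} exhibits $F$ as a rank-one matrix, $F_{i,j} = u_i v_j$ with $u_i = w_{i-1}(0)$ and $v_j = -w_{N-j}^{[j]}(0)/(\gamma \alpha_{j-1})$; the values $u_1 = 1$ and $v_N = -1/(\gamma \alpha_{N-1})$ show that both $u$ and $v$ are non-zero in $\CC^N$. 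Since $\gamma \in \{-1, 1\}$, the multiplication $(\Gamma y)_n = \gamma^n y_n$ is a self-inverse unitary on $\ell^2(\NN)$, and conjugating by $\Gamma$ converts the kernel $\gamma^{j-i}/\tilde{a}_j$ into the positive kernel $1/\tilde{a}_j$. Through the scalar projections $(Vx)_j = \sprod{x_j}{v}$ and $(Wx)_i = \sprod{u}{x_i}$, the off-diagonal quadratic form $2\Re\sprod{Ux}{x}$ then becomes a non-zero constant multiple of the bilinear form of the scalar operator $\widetilde{T}$ on $\ell^2(\NN)$ with kernel $\widetilde{T}_{i,j} = \tilde{a}_j^{-1}$ for $j > i$ and zero otherwise.

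Factor $\widetilde{T} = H M$, where $(Hz)_i = \sum_{j > i} z_j/j$ is the adjoint of Hardy's Cesaro operator---bounded on $\ell^2(\NN)$ by the discrete Hardy inequality---and $M$ is diagonal multiplication by $j/\tilde{a}_j$. For part~(b), the hypothesis bounds $M$, so $\widetilde{T}$ is bounded; combined with boundedness of $D$ (trivial since $\tilde{a}_k$ grows at least linearly in this regime), this yields $B$ bounded. For part~(c), $k/\tilde{a}_k \to 0$ forces $\tilde{a}_k \to \infty$, so $D$ is compact. Splitting $M = M_K + M'_K$ with $M_K$ the truncation to $j \leq K$, one has $H M_K$ of finite rank and $\|H M'_K\| \leq \|H\| \sup_{j > K} j/\tilde{a}_j \to 0$, so $\widetilde{T}$ and hence $B$ are compact.

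For part~(a), given $R > 0$, pick $K$ with $j/\tilde{a}_j \geq R$ for $j \geq K$ and test on the block vector $x_j = \gamma^j \chi_{\{K, \ldots, 2K\}}(j) \, e$, where $e \in \CC^N$ is any fixed real vector with $\sprod{u}{e} \sprod{e}{v} > 0$ (such $e$ exists because $u$ and $v$ are non-zero). Using $\gamma^2 = 1$ and $F = u v^T$, the off-diagonal contribution $2\Re\sprod{Ux}{x}$ collapses to a positive multiple of $\sum_{K \leq i < j \leq 2K} 1/\tilde{a}_j$, which is at least $c R K$, while $\|x\|^2$ is of order $K$. The diagonal contribution $\sprod{Dx}{x}$ is either of lower order (when $\tilde{a}_k$ stays bounded below) or itself supplies unboundedness (when $\tilde{a}_k \to 0$ along a subsequence, since then $\|D\| = \infty$). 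Hence $|\sprod{\calB x}{x}|/\|x\|^2 \geq c' R$ for $R$ large, and letting $R \to \infty$ shows $B$ is unbounded. The main subtlety is orchestrating the reduction cleanly---verifying the rank-one form of $F$, absorbing the phases $\gamma^{j-i}$ via $\Gamma$, and establishing the tail estimate for $\|H M'_K\|$; with those pieces in place the three conclusions are direct consequences of the discrete Hardy inequality.
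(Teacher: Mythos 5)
Your proposal is correct and follows the same overall strategy as the paper's proof: split $\calB$ into a block-diagonal part plus a strictly triangular part and its adjoint, control the triangular part by the discrete Hardy inequality in cases (b) and (c), and exhibit explicit test vectors in case (a). The differences are organizational. The paper estimates $\|(\calL x)_k\| \leq \|F\|\,\frac{k}{\tilde a_k}\cdot\frac{1}{k}\sum_{j<k}\|x_j\|$ and applies Hardy's inequality to the scalar sequence of norms, whereas you first exploit the rank-one factorization $F_{i,j}=u_i v_j$ visible in \eqref{eq:21} (with $u_1=w_0(0)=1$ and $v_N=-1/(\gamma\alpha_{N-1})$ nonzero) to conjugate the whole triangular part to a genuinely scalar operator $HM$; this buys a marginally cleaner compactness argument via finite-rank truncation of $M$, at the cost of the extra reduction. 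For (a) the paper evaluates $\|\calB x^M\|$ directly on the triangular-profile vectors $(-\gamma^n(M-n)^+,0,\dots,0)$, using that the first column of $F$ vanishes; your indicator vectors combined with the quadratic form $\langle\calB x,x\rangle$ work equally well. Two small points need tightening. First, a real $e$ with $(u^Te)(v^Te)>0$ need not exist (e.g.\ when $v$ is a negative multiple of $u$ the form $(u^Te)(v^Te)$ is everywhere nonpositive); but a nonzero value of constant sign suffices, since only $|\langle\calB x,x\rangle|$ enters, and such an $e$ always exists ($e=u$ if $u^Tv\neq 0$, $e=u+v$ if $u\perp v$). Second, in the subcase $\liminf_k \tilde a_k=0$ the bare statement $\|D\|=\infty$ does not by itself preclude cancellation against $U+U^*$; you should test on a single block $x=\delta_K\otimes e$, for which $\langle Ux,x\rangle=0$ and $\langle\calB x,x\rangle=\tilde a_K^{-1}\langle D_N^{-1}(0)e,e\rangle$, choosing $e$ with $\langle D_N^{-1}(0)e,e\rangle\neq 0$ (possible since $D_N^{-1}(0)$ is symmetric and invertible, hence its quadratic form is non-degenerate). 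With these repairs the argument is complete.
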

\begin{proof}
	We have to show that $\calB$ restricted to $\ell^2(\NN; \CC^N)$ has the requested properties.
	
	Decompose operator $\calB$ by
	\begin{equation} \label{eq:16}
		\calB = \calL + \calX + \calL^*,
	\end{equation}
	where $\calX$ is a block diagonal and $\calL$ is a block lower triangular matrix.
	For any $x \in \ell^2(\NN; \CC^N)$ we have
	\begin{equation} \label{eq:14}
		(\calX x)_k = \frac{1}{\tilde{a}_k} D_N^{-1}(0) x_k
	\end{equation}
	and for $k>0$
	\begin{equation} \label{eq:15}
		(\calL x)_k = \frac{1}{\tilde{a}_k} \sum_{j=0}^{k-1} \gamma^{k-j} F^* x_j = 
		\frac{k}{\tilde{a}_k} \frac{1}{k} \sum_{j=0}^{k-1} \gamma^{k-j} F^* x_j.
	\end{equation}

	Assume that the sequence $(k / \tilde{a}_{k} : k \geq 1)$ is bounded. Then $\tilde{a}_{k}$
	is divergent to $\infty$. Hence, by \eqref{eq:14} the operator $\calX$ is compact as a product of
	a compact and a bounded operator.
	Observe
	\[
		\bigg\| \frac{1}{k} \sum_{j=0}^{k-1} \gamma^{k-j} F^* x_j \bigg\|_{\CC^N} \leq
		\norm{F} \frac{1}{k} \sum_{j=0}^{k-1} \norm{x_j}_{\CC^N}.
	\]
	Hence, by Hardy's inequality (see \cite{Hardy1920} for a original proof with a slightly worse constant)
	\[
		\sum_{k=1}^\infty \bigg\| \frac{1}{k} \sum_{j=0}^{k-1} \gamma^{k-j} F^* x_j \bigg\|_{\CC^N}^2 \leq
		4 \norm{F}^2 \norm{x}_{\ell^2(\NN; \CC^N)}^2.
	\]
	Formula \eqref{eq:15} implies that $\calL$ (and consequently also $\calL^*$) is bounded. Therefore, by
	\eqref{eq:16} operator $\calB$ is bounded. Moreover, if
	\[
		\lim_{k \rightarrow \infty} \frac{k}{\tilde{a}_{k}} = 0,
	\]
	then $\calL$ is a product of a compact and a bounded operator. Hence, $\calL$ is compact.
	It implies that $\calL^*$ is compact as well. Therefore, $\calB$ is compact as a sum of compact operators.
	
	Assume now that
	\begin{equation} \label{eq:23}
		\lim_{k \rightarrow \infty} \frac{k}{\tilde{a}_{k}} = \infty.
	\end{equation}
	For every $M \geq 1$ define\footnote{For $x \in \RR$ we define $x^+ = \max(0, x)$.}
	\[
		x^M_n = (-\gamma^n (M-n)^+, 0, 0,  \ldots, 0) \in \CC^N.
	\]
	By \eqref{eq:15}
	\[
		[(\calL x^M)_k]_N = -\frac{\gamma^{k}}{\tilde{a}_k} \sum_{j=0}^{k-1} (M-j)^+ [F^*]_{N,1}.
	\]
	By the fact that the first column of $F$ is zero, we have
	\[
		\calL^* x^M = 0.
	\]
	Proposition~\ref{prop:15} and formula \eqref{eq:21} imply
	\[
		[F^*]_{N,1} = -\frac{\gamma}{\alpha_{N-1}} = [D_N^{-1}(0)]_{N,1},
	\]
	and consequently,
	\[
		[(\calB x^M)_k]_N = \frac{\gamma^{k+1}}{\alpha_{N-1} \tilde{a}_k} \sum_{j=0}^{k} (M-j)^+.
	\]
	Hence,
	\begin{multline} \label{eq:24}
		\frac{1}{\norm{x^M}^2} \norm{\calB x^M}^2 \geq
		\frac{1}{\norm{x^M}^2} \sum_{k=0}^{M-1} |[(\calB x^M)_k]_N|^2 =
		\frac{1}{\alpha_{N-1}^2 \norm{x^M}^2} 
		\sum_{k=0}^{M-1} \frac{1}{\tilde{a}_k^2} \left( \frac{M(M-k)}{2}\right)^2 \\
		\geq \frac{\norm{x^M}^2}{4 \alpha_{N-1}^2\norm{x^M}^2 }
		\frac{M^2}{\max_{0 \leq k \leq M} \tilde{a}_k^2} =
		\frac{1}{4 \alpha_{N-1}^2}
		\frac{M^2}{\max_{0 \leq k \leq M} \tilde{a}_k^2}.
	\end{multline}
	For every $M$ let $0 \leq k_M \leq M$ be such that
	\[
		\max_{0 \leq k \leq M} \tilde{a}_k = \tilde{a}_{k_M}.
	\]
	Then
	\[
		\frac{M}{\max_{0 \leq k \leq M} \tilde{a}_k} = \frac{M}{\tilde{a}_{k_M}}.
	\]
	Now we have two cases. If the sequence $(\tilde{a}_M : M \geq 0)$ attains its supremum,
	then the denominator of the right-hand side is eventually constant, and consequently, the whole
	expression tends to infinity. Otherwise, the sequence $(\tilde{a}_M : M \geq 0)$ does not attain 
	its supremum. Hence, the sequence $(k_M : M \geq 0)$ tends to infinity. Since
	\[
		\frac{M}{\max_{0 \leq k \leq M} \tilde{a}_k} \geq \frac{k_M}{\tilde{a}_{k_M}}
	\]
	and \eqref{eq:23} the right-hand side of the last inequality is unbounded as 
	$M$ tends to infinity. Therefore, by \eqref{eq:24} the operator $\calB$ cannot be bounded on 
	$\ell^2(\NN;\CC^N)$. The proof is complete.
\end{proof}

\section{Asymptotics of generalised eigenvectors} \label{sec:asympGenEig}
\subsection{Definitions}
For each $\lambda \in \RR$ and $n \in \NN$ we define a binary quadratic form on $\RR^2$ by
\begin{equation} \label{eq:45}
	Q_n^\lambda(v) = a_{n+N-1}\sprod{E X_n(\lambda) v}{v},
\end{equation}
where $X_n(\lambda)$ is given by
\begin{equation} \label{eq:32}
	X_n(\lambda) = \prod_{k=n}^{n+N-1} B_{k}(\lambda), \quad \text{for} \quad
	B_k(\lambda) =
	\begin{pmatrix}
		0 & 1 \\
		-\frac{a_{k-1}}{a_k} & \frac{\lambda - b_k}{a_k}
	\end{pmatrix}
\end{equation}
and $E$ by
\[
	E = 
	\begin{pmatrix}
		0 & -1 \\
		1 & 0
	\end{pmatrix}.
\]
Finally, we define $N$-shifted Turán determinants by
\begin{equation} \label{eq:17}
	S_n(\alpha, \lambda) = a_{n + N - 1} Q^\lambda_n
	\begin{pmatrix}
			u_{n-1} \\
            u_n
	\end{pmatrix} =
	a_{n+N-1}^2 [u_{n+N-1} u_n - u_{n+N} u_{n-1}],
\end{equation}
where $u$ is the generalised eigenvector corresponding to $\lambda$ such that $(u_0, u_1) = \alpha \in \RR^2 \setminus \{ 0 \}$.

The following Proposition suggests that sequences satisfying assumptions of Theorem~\ref{thm:C} 
are in fact perturbations of sequences considered in Theorem~\ref{thm:A} and \ref{thm:B}.
\begin{proposition} \label{prop:4}
	Assume we are given $N$-periodic sequences $r$ and $q$ such that $r_0 r_1 \ldots r_{N-1} = 1$.
	Then
	\[
		r_i = \frac{\alpha_{i-1}}{\alpha_i}, \quad q_i = \frac{\beta_i}{\alpha_i}
	\]
	for $N$-periodic sequences 
	\[
		\alpha_i = \prod_{k=i+1}^{N-1} r_k, \quad \beta_i = \alpha_i q_i \qquad (i=0,1,\ldots,N-1).
	\]
\end{proposition}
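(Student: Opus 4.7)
This is essentially a direct verification, so the plan is to check the stated identities on a fundamental period $\{0,1,\ldots,N-1\}$ and then appeal to $N$-periodicity to extend.

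First I would define $\alpha_i = \prod_{k=i+1}^{N-1} r_k$ and $\beta_i = \alpha_i q_i$ for $i \in \{0,1,\ldots,N-1\}$ and extend both sequences by $N$-periodicity to all of $\ZZ$. The relation $q_i = \beta_i/\alpha_i$ is then immediate from the definition of $\beta_i$, so the only non-trivial identity to verify is
\[
	r_i = \frac{\alpha_{i-1}}{\alpha_i}, \qquad i = 0, 1, \ldots, N-1.
\]

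For $i \in \{1,\ldots,N-1\}$ this follows by a telescoping calculation:
\[
	\frac{\alpha_{i-1}}{\alpha_i} = \frac{\prod_{k=i}^{N-1} r_k}{\prod_{k=i+1}^{N-1} r_k} = r_i.
\]
The boundary case $i = 0$ is where the normalisation assumption enters: by the $N$-periodicity of $\alpha$ we have $\alpha_{-1} = \alpha_{N-1} = \prod_{k=N}^{N-1} r_k = 1$ (empty product), so
\[
	\frac{\alpha_{-1}}{\alpha_0} = \frac{1}{\prod_{k=1}^{N-1} r_k},
\]
which equals $r_0$ precisely when $r_0 r_1 \cdots r_{N-1} = 1$, i.e.\ under the standing hypothesis.

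There is no real obstacle here; the main conceptual point to flag is that the condition $r_0 r_1 \cdots r_{N-1} = 1$ is exactly the compatibility condition needed for the ratios $\alpha_{i-1}/\alpha_i$ to close up into an $N$-periodic sequence matching $r$. Indeed, if $\alpha$ is any $N$-periodic positive sequence with $r_i = \alpha_{i-1}/\alpha_i$, then $\prod_{i=0}^{N-1} r_i = \alpha_{-1}/\alpha_{N-1} = 1$, so the hypothesis is also necessary, and the solutions form a one-parameter family (the overall scale of $\alpha$). The choice $\alpha_{N-1} = 1$ (equivalently the product formula above) is merely a normalisation.
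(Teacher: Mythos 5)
Your verification is correct and is precisely the ``direct computation'' that the paper's one-line proof alludes to: the telescoping identity for $i\geq 1$ and the use of $r_0 r_1\cdots r_{N-1}=1$ at the wrap-around index $i=0$ are exactly the content being suppressed. The added remark on necessity and the one-parameter scaling freedom is a correct bonus observation, not required for the statement.
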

\begin{proof}
	Direct computation.
\end{proof}
For the rest of this article if not otherwise stated by $\alpha$ and $\beta$ we always mean the sequences 
from Proposition~\ref{prop:4}.

Following \cite{Swiderski2017}, we are going to show that for every $i$ the sequence 
$(S_{kN+i} : k \geq 0)$ is uniformly convergent on compact subsets of some open set $\Lambda \subset \RR$
to a continuous function which has no zeros on $\Lambda$. In light of Lemma~\ref{lem:2} below and
the techniques presented in \cite{Swiderski2017} we are going to prove an estimate of the kind
\[
	\sum_{n=0}^{\infty} \sup_{\lambda \in K} \norm{C_{n+N}(\lambda) - C_n(\lambda)} < \infty
\]
for every compact $K \subset \CC$ and
\[
	C_n(\lambda) = a_{n+N-1} (X_n(\lambda) - \gamma \Id).
\]
Since the proof of this estimate is rather complicated and technical, we divide the analysis to several
subsections.

\subsection{Technical estimates}
We start by considering a simplified version of our problem.
\begin{proposition} \label{prop:1}
	Let $N$ be a positive integer and $i \in \NN$. Define
	\begin{equation} \label{prop:1:eq:3}
		\hat{\calX}^i(\lambda) = \prod_{j=i}^{N+i-1} \hat{\calB}^j(\lambda), \quad \text{for} \quad
		\hat{\calB}^j(\lambda) = 
		\begin{pmatrix}
			0 & 1 \\
			-\frac{\alpha_{j-1}}{\alpha_j} & \frac{\lambda - \beta_j}{\alpha_j}
		\end{pmatrix}
	\end{equation}
	for $N$-periodic sequences $\alpha$ and $\beta$. Assume that $\hat{\calX}^i(0) = \gamma \Id$. Let
	\begin{equation} \label{prop:1:eq:2}
		\hat{C}^i_k(\lambda) = 
		\hat{a}_k \left[ \hat{\calX}^i \left( \frac{\lambda}{\hat{a}_k} \right) - \gamma \Id \right]
	\end{equation}
	for a sequence $\hat{a}$ such that
	\begin{equation} \label{prop:1:eq:4}
		\lim_{k \rightarrow \infty} \hat{a}_k = \infty.
	\end{equation}
	Fix a compact set $K \subset \CC$. Then
	\begin{equation} \label{prop:1:eq:1}
		\hat{\calC}^i(\lambda) := \lim_{k \rightarrow \infty} \hat{C}^i_k(\lambda) = \lambda
		\begin{pmatrix}
			-\frac{\alpha_{i-1}}{\alpha_i} (w_{N-2}^{[i+1]})'(0) & (w_{N-1}^{[i]})'(0) \\
			-\frac{\alpha_{i-1}}{\alpha_i} (w_{N-1}^{[i+1]})'(0) & (w_{N}^{[i]})'(0)
		\end{pmatrix}
	\end{equation}
	uniformly on $K$, where for every $k$ the symbol $w^{[k]}$ denotes 
	the sequence of orthonormal polynomials associated with $\alpha^{[k]}$ and $\beta^{[k]}$.
	Moreover, there is a constant $c > 0$ such that
	\begin{equation} \label{prop:1:eq:5}
		\sup_{\lambda \in K} \norm{\hat{C}^i_{k+1}(\lambda) - \hat{C}^i_k(\lambda)} \leq 
		c \left| \frac{1}{\hat{a}_{k+1}} - \frac{1}{\hat{a}_k} \right|.
	\end{equation}
\end{proposition}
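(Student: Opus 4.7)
The plan is to apply Proposition~\ref{prop:13}, with the sequences $\alpha$ and $\beta$ replaced by their shifts $\alpha^{[i]}$ and $\beta^{[i]}$, to recognise $\hat{\calX}^i(\lambda)$ as a matrix of polynomials in $\lambda$. Once this is in place, both claims reduce to an elementary Taylor expansion at $\lambda = 0$. Using $\alpha^{[i]}_{N-1} = \alpha_{i-1}$ and $\alpha^{[i]}_0 = \alpha_i$ (by $N$-periodicity), Proposition~\ref{prop:13} gives
\[
    \hat{\calX}^i(\lambda) =
    \begin{pmatrix}
        -\frac{\alpha_{i-1}}{\alpha_i} w^{[i+1]}_{N-2}(\lambda) & w^{[i]}_{N-1}(\lambda) \\
        -\frac{\alpha_{i-1}}{\alpha_i} w^{[i+1]}_{N-1}(\lambda) & w^{[i]}_N(\lambda)
    \end{pmatrix}.
\]
The hypothesis $\hat{\calX}^i(0) = \gamma \Id$ then reads $w^{[i]}_{N-1}(0) = w^{[i+1]}_{N-1}(0) = 0$, $w^{[i]}_N(0) = \gamma$ and $-(\alpha_{i-1}/\alpha_i) w^{[i+1]}_{N-2}(0) = \gamma$, so every entry of $\hat{\calX}^i(\lambda) - \gamma \Id$ is a polynomial in $\lambda$ vanishing at the origin.

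Writing each such entry as $P(\lambda) = \lambda\, g_P(\lambda)$, with $g_P$ a polynomial satisfying $g_P(0) = P'(0)$, the matrix factorises as
\[
    \hat{C}^i_k(\lambda)
    = \hat{a}_k \bigl[\hat{\calX}^i(\lambda/\hat{a}_k) - \gamma \Id\bigr]
    = \lambda \cdot H(\lambda/\hat{a}_k),
\]
where $H$ is a fixed $M_2(\CC)$-valued polynomial whose value at $0$ is exactly the matrix on the right-hand side of \eqref{prop:1:eq:1}. Since \eqref{prop:1:eq:4} implies $\lambda/\hat{a}_k \to 0$ uniformly for $\lambda \in K$, continuity of $H$ gives uniform convergence of $\hat{C}^i_k$ to $\lambda \cdot H(0)$ on $K$, which is \eqref{prop:1:eq:1}.

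For \eqref{prop:1:eq:5}, the same factorisation yields
\[
    \hat{C}^i_{k+1}(\lambda) - \hat{C}^i_k(\lambda)
    = \lambda \bigl[H(\lambda/\hat{a}_{k+1}) - H(\lambda/\hat{a}_k)\bigr].
\]
By \eqref{prop:1:eq:4} the set $\{\lambda/\hat{a}_k : \lambda \in K,\ k \geq 0\}$ is contained in a bounded subset of $\CC$, and since $H$ is a polynomial it is Lipschitz there with some constant $L$. Hence
\[
    \norm{\hat{C}^i_{k+1}(\lambda) - \hat{C}^i_k(\lambda)}
    \leq L\, \abs{\lambda}^2 \Bigl|\frac{1}{\hat{a}_{k+1}} - \frac{1}{\hat{a}_k}\Bigr|,
\]
and taking $\sup_{\lambda \in K}$ produces the desired estimate with $c = L \sup_{\lambda \in K} \abs{\lambda}^2$. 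There is no substantial obstacle in this argument; the only care needed is in bookkeeping the index shifts when applying Proposition~\ref{prop:13} and checking that the polynomial quotient $P(\lambda)/\lambda$ behaves correctly for each matrix entry.
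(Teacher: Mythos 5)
Your proposal is correct and follows essentially the same route as the paper: both proofs invoke Proposition~\ref{prop:13} to identify the entries of $\hat{\calX}^i(\lambda)-\gamma\Id$ as polynomials vanishing at $0$ and then conclude by a Taylor-type expansion at the origin. Your packaging of the expansion as $\hat{C}^i_k(\lambda)=\lambda\,H(\lambda/\hat{a}_k)$ with a Lipschitz bound on the polynomial matrix $H$ is a slightly cleaner way of carrying out the explicit telescoping of Taylor coefficients that the paper performs, but it is not a substantively different argument.
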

\begin{proof}
	Fix a compact set $K \subset \CC$. First, by Proposition~\ref{prop:13}, we get
	\[
		\hat{\calX}^i(\lambda_k) =
		\begin{pmatrix}
			-\frac{\alpha_{i-1}}{\alpha_i} w_{N-2}^{[i+1]}(\lambda_k) & w_{N-1}^{[i]}(\lambda_k) \\
			-\frac{\alpha_{i-1}}{\alpha_i} w_{N-1}^{[i+1]}(\lambda_k) & w_N^{[i]}(\lambda_k)
		\end{pmatrix},
	\]
	where $\lambda_k = \lambda/\hat{a}_k$. By $\hat{\calX}^i(0) = \gamma \Id$ we have
	\[
		-\frac{\alpha_{i-1}}{\alpha_i} w_{N-2}^{[i+1]}(0) = w_N^{[i]}(0) = \gamma, \qquad 
		w_{N-1}^{[i]}(0) = w_{N-1}^{[i+1]}(0) = 0.
	\]
	Therefore, by \eqref{prop:1:eq:2}
	\begin{equation} \label{eq:19}
		\hat{C}^i_k(\lambda) = \hat{a}_k
		\begin{pmatrix}
			-\frac{\alpha_{i-1}}{\alpha_i} [w_{N-2}^{[i+1]}(\lambda_k) - w_{N-2}^{[i+1]}(0)] & 
				w_{N-1}^{[i]}(\lambda_k) \\
			-\frac{\alpha_{i-1}}{\alpha_i} w_{N-1}^{[i+1]}(\lambda_k) & 
				[w_N^{[i]}(\lambda_k) - w_N^{[i]}(0)]
		\end{pmatrix}.
	\end{equation}
	For every polynomial $p$ of degree $n$ and such that $p(0)=0$ Taylor expansion implies
	\begin{equation} \label{eq:20}
		\hat{a}_k p(\lambda_k) = -\lambda p'(0) - 
		\lambda \sum_{m=2}^n \frac{p^{(m)}(0)}{m!} \left( \frac{-\lambda}{\hat{a}_k} \right)^{m-1}.
	\end{equation}
	Assumption \eqref{prop:1:eq:4} implies $\lambda_k \rightarrow 0$. Hence, \eqref{eq:20} applied 
	to every entry in \eqref{eq:19} implies \eqref{prop:1:eq:1}. Moreover, \eqref{eq:20} implies
	\begin{multline*}
		\hat{a}_{k+1} p(\lambda_{k+1})  - \hat{a}_k p(\lambda_k) =
		-\lambda  \sum_{m=2}^n \frac{p^{(m)}(0)}{m!} (-\lambda)^{m} 
		\left( \frac{1}{(\hat{a}_{k+1})^m} - \frac{1}{(\hat{a}_{k})^m} \right) = \\
		-\lambda \left( \frac{1}{\hat{a}_{k+1}} - \frac{1}{\hat{a}_{k}} \right) 
		\sum_{m=2}^n \frac{p^{(m)}(0)}{m!} (-\lambda)^{m} 
		\sum_{r=0}^{m-1} \left( \frac{1}{\hat{a}_{k}} \right)^r 
		\left( \frac{1}{\hat{a}_{k+1}} \right)^{m-1-r}.
	\end{multline*}
	Hence, by compactness of $K$ and \eqref{prop:1:eq:4} 
	\[
		\sup_{\lambda \in K} |\hat{a}_{k+1} p(\lambda_{k+1})  - \hat{a}_k p(\lambda_k)| \leq 
		c \left| \frac{1}{\hat{a}_{k+1}} - \frac{1}{\hat{a}_{k}} \right|
	\]
	for a constant $c > 0$, which applied to every entry in $[\hat{C}^i_{k+1}(\lambda) - \hat{C}^i_k(\lambda)]$
	(see \eqref{eq:19}) implies \eqref{prop:1:eq:5}. The proof is complete.
\end{proof}

The following Lemma will be needed in several places of this work. The only technicality here
is the existence of uniform constants.
\begin{lemma} \label{prop:3}
	Assume
	\[
		\lim_{n \rightarrow \infty} a_n = \infty.
	\]
	If for $N$-periodic sequences $s$ and $z$
	\[
		\lim_{n \rightarrow \infty} |r_n a_n - a_{n-1} - s_n| = 
		\lim_{n \rightarrow \infty} |q_n a_n - b_n - z_n| = 0,
	\]
	then
	\begin{equation} \label{prop:3:eq:1}
		\lim_{n \to \infty} \left| \frac{a_{n-1}}{a_n} - r_n \right| = 0, \quad 
		\lim_{n \to \infty} \left| \frac{b_{n}}{a_n} - q_n \right| = 0.
	\end{equation}
	Moreover, for every $j \geq 1$ there is a constant $c>0$ such that for all $m > 0$
	\begin{equation} \label{prop:3:eq:2}
		\calV_N\left( \frac{1}{a_n} : n \geq m \right) + \calV_N\left( \frac{a_{n+j}}{a_n} : n \geq m \right) + 
		\calV_N \left( \frac{b_n}{a_n} : n \geq m \right) \leq c f_m,
	\end{equation}
	where
	\[
		f_m = 
		\calV_N\left( \frac{1}{a_n} : n  \geq m \right) + 
		\calV_N\left( r_n a_{n} - a_{n-1} : n \geq m \right) + 
		\calV_N\big(b_n - q_n a_n : n \geq m \big).
	\]
\end{lemma}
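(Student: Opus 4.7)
The plan is to deduce both parts by directly manipulating the two hypotheses, using the $N$-periodicity of $r,q,s,z$ to cancel the ``leading parts,'' and then invoking the basic variational rules from Proposition~\ref{prop:6}.

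For \eqref{prop:3:eq:1} I would simply divide by $a_n$. Writing
\[
\left| \frac{a_{n-1}}{a_n} - r_n \right|
= \frac{|r_n a_n - a_{n-1}|}{a_n}
\leq \frac{|r_n a_n - a_{n-1} - s_n|}{a_n} + \frac{|s_n|}{a_n},
\]
the first summand tends to $0$ by assumption, while $s$ is $N$-periodic (hence bounded) and $a_n \to \infty$, so the second summand tends to $0$ as well. The same calculation with $q_n,z_n,b_n$ in place of $r_n,s_n,a_{n-1}$ gives the second limit.

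For \eqref{prop:3:eq:2}, the term $\calV_N(1/a_n)$ is already one of the summands of $f_m$, so nothing needs to be done. For $\calV_N(b_n/a_n)$ I would exploit that $q$ is $N$-periodic to write
\[
\frac{b_n}{a_n} - q_n = -\frac{q_n a_n - b_n}{a_n},
\]
so that $\calV_N(b_n/a_n : n \geq m) = \calV_N\bigl((q_n a_n - b_n)/a_n : n \geq m\bigr)$, and then apply Proposition~\ref{prop:6}\eqref{prop:6d} with $x_n = q_n a_n - b_n$ and $y_n = 1/a_n$. Since $q_n a_n - b_n \to z_n$, the sequence $(q_n a_n - b_n)$ is bounded on all of $\NN$, and likewise $(1/a_n)$ is bounded on all of $\NN$, so both suprema can be replaced by constants independent of $m$. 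The same trick, using $N$-periodicity of $r$, gives
\[
\calV_N\Bigl(\tfrac{a_{n-1}}{a_n} : n \geq m\Bigr) = \calV_N\Bigl(\tfrac{r_n a_n - a_{n-1}}{a_n} : n \geq m\Bigr) \leq C\,f_m,
\]
for a uniform constant $C$.

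To pass from $a_{n-1}/a_n$ to $a_{n+j}/a_n$, set $\rho_n = a_{n-1}/a_n$ and observe that $\rho_n \to r_n > 0$, so the whole sequence $(\rho_n : n \geq 0)$ is bounded above and bounded away from $0$ by positive constants (using positivity of each $\rho_n$ and the positive periodic limit). By Proposition~\ref{prop:6}\eqref{prop:6e},
\[
\calV_N(\rho_n^{-1} : n \geq m) \leq \bigl(\sup_{n \geq 0} \rho_n^{-1}\bigr)^{\!2}\calV_N(\rho_n : n \geq m),
\]
and then from $a_{n+j}/a_n = \prod_{k=1}^{j} \rho_{n+k}^{-1}$, an induction on $j$ using Proposition~\ref{prop:6}\eqref{prop:6d} (with the uniform bound on $\rho_n^{-1}$) produces a constant $c=c_j$, independent of $m$, with $\calV_N(a_{n+j}/a_n : n \geq m) \leq c\,f_m$. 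Summing the three contributions gives \eqref{prop:3:eq:2}.

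The only point that requires genuine care is the uniformity in $m$ of all the suprema appearing after applying Proposition~\ref{prop:6}\eqref{prop:6d},\eqref{prop:6e}. Each such supremum is taken over a tail of a sequence that converges to a bounded $N$-periodic limit (respectively, $r_n$, $q_n$, $s_n$, $z_n$, or $a_n^{-1}\to 0$), so it is bounded by $\sup_{n \geq 0}$ of the same sequence, which is finite and $m$-independent; this is the only mildly delicate issue in the argument.
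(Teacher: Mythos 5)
Your argument is correct and follows essentially the same route as the paper: divide the hypotheses by $a_n$ to get \eqref{prop:3:eq:1}, subtract the periodic sequences $r_n$, $q_n$ (which leaves the $N$-variations unchanged), and then apply the product/inverse rules of Proposition~\ref{prop:6} with suprema taken over all of $\NN$ to get $m$-independent constants. The only cosmetic difference is that you control $a_{n+j}/a_n$ by first bounding $\calV_N(a_{n-1}/a_n)$ and then inverting via Proposition~\ref{prop:6}\eqref{prop:6e}, whereas the paper writes the reciprocal difference $\frac{a_{n+1}}{a_n}-\frac{1}{r_{n+1}}=\frac{1}{r_{n+1}a_n}(r_{n+1}a_{n+1}-a_n)$ directly and then telescopes the product exactly as you do.
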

\begin{proof}
	Observe that the sequences $(r_{n+1} a_{n} - a_{n} : n \geq 0)$ and 
	$(b_n - q_n a_n : n \geq 0)$ are bounded. Since 
	\begin{equation} \label{eq:4}
		\frac{a_{n+1}}{a_n} - \frac{1}{r_{n+1}} = \frac{1}{r_{n+1} a_n} (r_{n+1} a_{n+1} - a_n), \quad
		\frac{b_n}{a_n} - q_n = \frac{1}{a_n} (b_n - q_n a_n),
	\end{equation}
	and the divergence of $a_n$ we have \eqref{prop:3:eq:1}.

	By \eqref{eq:4} and Proposition~\ref{prop:6}, we can estimate
	\begin{multline*}
		\calV_N\bigg(\frac{a_{n+1}}{a_n} : n \geq m \bigg) 
		 = \calV_N\bigg(\frac{a_{n+1}}{a_n} - \frac{1}{r_{n+1}} : n  \geq m \bigg) 
		 = \calV_N\bigg((r_{n+1} a_{n + 1} - a_n) \frac{1}{r_{n+1} a_n} : n  \geq m \bigg) \\
		 \leq
		c \calV_N\big(r_{n+1} a_{n+1} - a_n : n \geq m \big) + 
		c \max_{n \in \NN} |1/r_n| \calV_N\bigg(\frac{1}{a_n} : n  \geq m \bigg)
	\end{multline*}
	for 
	\[
		c = \max \left( \sup_{n \in \NN} |1/(r_{n+1} a_n)|,\ 
		\sup_{n \in \NN} |r_{n+1} a_{n+1} - a_n| \right).
	\]	
	Similarly, by Proposition~\ref{prop:6} for each $j \geq 1$
	\[
		\calV_N\bigg(\frac{a_{n+j}}{a_n} : n \geq m \bigg)
		\leq
		c
		\calV_N\bigg(\frac{a_{n+1}}{a_n} : n \geq m \bigg),
	\]
	where
	\[
		c = j \sup_{n \in \NN} \left( \frac{a_{n+1}}{a_n} \right)^j.
	\]
	Finally, by \eqref{eq:4} and Proposition~\ref{prop:6} we can estimate
	\begin{align*}
		\calV_N\bigg(\frac{b_n}{a_n} : n \geq m \bigg)
		& =
		\calV_N\bigg(\frac{b_n}{a_n} - q_n : n \geq m \bigg) \\
		& \leq
		c
		\calV_N\bigg(\frac{1}{a_n} : n \geq m \bigg)
		+
		c
		\calV_N\big(b_n - q_n a_n : n \geq m \big)
	\end{align*}
	for
	\[
		c = \max \left( \sup_{n \in \NN} |1/a_n|,\ \sup_{n \in \NN} |b_n - q a_n| \right).
	\]
\end{proof}

The following Proposition is the main result of this section.
\begin{proposition} \label{prop:2}
	Let us assume that for $N$-periodic sequences $s$ and $z$
	\begin{enumerate}[(a)]
		\item $\begin{aligned}[b]
		\lim_{n \to \infty} a_n = \infty,
		\end{aligned}$ \label{prop:2:a}
		
		\item $\begin{aligned}[b]
		\lim_{n \to \infty} |r_n a_n - a_{n-1} - s_n| = 0, \quad 
		\lim_{n \to \infty} |q_n a_n - b_n - z_n| = 0.
		\end{aligned}$ \label{prop:2:b}
	\end{enumerate}
	Fix a compact set $K \subset \CC$. Then for 
	\begin{equation} \label{prop:2:eq:2}
		C_n(\lambda) = a_{n + N - 1} \big(X_n(\lambda) - \gamma \Id\big)
	\end{equation}
	one has that for every $i$
	\[
		\calC_i(\cdot) = \lim_{k \rightarrow \infty} C_{kN+i}(\cdot)
	\]
	uniformly on $K$, where
	\begin{equation} \label{prop:2:eq:1}
		\calC_i(\lambda) = \alpha_{i-1} \calD_i + \alpha_{i-1} \hat{\calC}^i(\lambda)
	\end{equation}
	for
	\begin{equation} \label{prop:2:eq:4}
		\calD_i = 
		\sum_{j=0}^{N-1}
		\frac{1}{\alpha_{i+j}}
		\left[
			\prod_{m=j+1}^{N-1} \hat{\calB}^{i+m} (0)
		\right]
		\begin{pmatrix}
			0 & 0 \\
			s_{i+j} & z_{i+j}
		\end{pmatrix}
		\left[
			\prod_{m=0}^{j-1} \hat{\calB}^{i+m} (0)
		\right] 
	\end{equation}
	and for $\hat{\calC}^i$ and $\hat{\calB}^{j}$ defined in \eqref{prop:1:eq:1} and \eqref{prop:1:eq:3},
	respectively. Moreover, a constant $c>0$ such that for all $m > 0$
	\begin{equation} \label{prop:2:eq:3}
		\sum_{n = m}^\infty \sup_{\lambda \in K} \big\| C_{n+N}(\lambda) - C_n(\lambda) \big\| \leq c f_m,
	\end{equation}
	where
	\[
		f_m = \calV_N\left( \frac{1}{a_n} : n \geq m \right) + 
		\calV_N\left( r_n a_{n} - a_{n-1} : n \geq m \right) + 
		\calV_N\big(b_n - q_n a_n : n \geq m \big).
	\]
\end{proposition}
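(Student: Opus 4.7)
The plan is to write each single-step transfer matrix as $B_k(\lambda) = \hat{\calB}^k(0) + \tilde B_k(\lambda)$, where
\[
\tilde B_k(\lambda) = \frac{1}{a_k}\begin{pmatrix} 0 & 0 \\ r_k a_k - a_{k-1} & \lambda - b_k + q_k a_k \end{pmatrix}.
\]
By hypothesis (b), the scaled correction $a_k \tilde B_k(\lambda)$ is close, uniformly on $K$, to the $N$-periodic matrix with entries $s_k$ and $\lambda + z_k$ in the bottom row. Expanding the product $X_n(\lambda)$ multilinearly, and using that $\prod_{k=n}^{n+N-1}\hat{\calB}^k(0) = \gamma\Id$ for \emph{every} starting index $n$ (the same conjugation argument that opens the proof of Proposition~\ref{prop:10}), I write $X_n(\lambda) - \gamma\Id = T_n(\lambda) + R_n(\lambda)$, where $T_n$ gathers the terms linear in $\tilde B$ and $R_n$ gathers the higher-order ones. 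Since $\|\tilde B_k\| = O(1/a_k)$ uniformly on $K$, the remainder satisfies $\|R_n(\lambda)\| = O(1/a_n^2)$.

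For the uniform convergence along $n = kN + i$, I multiply by $a_{n+N-1}$ and pass to the limit. Each summand of $a_{n+N-1} T_n$ has the form
\[
\frac{a_{(k+1)N+i-1}}{a_{kN+i+m}}\left[\prod_{\ell=m+1}^{N-1} \hat{\calB}^{i+\ell}(0)\right]\cdot a_{kN+i+m}\tilde B_{kN+i+m}(\lambda)\cdot\left[\prod_{\ell=0}^{m-1} \hat{\calB}^{i+\ell}(0)\right].
\]
Lemma~\ref{prop:3} gives convergence of the ratio to $\alpha_{i-1}/\alpha_{i+m}$, hypothesis (b) gives convergence of the middle factor, and $a_{n+N-1}\|R_n\| = O(1/a_n) \to 0$. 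Separating the $\lambda$-independent part reproduces exactly $\alpha_{i-1}\calD_i$ of \eqref{prop:2:eq:4}. For the $\lambda$-linear part, observing that $\tfrac{d}{d\lambda}\hat{\calB}^k(\lambda)\big|_{\lambda=0} = \tfrac{1}{\alpha_k}\bigl(\begin{smallmatrix}0 & 0 \\ 0 & 1\end{smallmatrix}\bigr)$, the multilinear sum coincides with $\alpha_{i-1}\,\lambda\,(\hat{\calX}^i)'(0)$ by the Leibniz rule, and Proposition~\ref{prop:1} identifies this with $\alpha_{i-1}\hat{\calC}^i(\lambda)$.

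For the summability estimate \eqref{prop:2:eq:3}, I bound $\calV_N$ of $C_n$ factor by factor. The $N$-periodic matrices $\hat{\calB}^\ell(0)$ contribute zero variation; the ratios $a_{n+N-1}/a_j$ (for $j$ in a bounded window around $n$) have $\calV_N \leq c f_m$ by Lemma~\ref{prop:3}; and $a_j \tilde B_j(\lambda)$ has $\calV_N \leq c f_m$ uniformly in $\lambda \in K$ directly from \eqref{thm:C:eq:2}. The product and sum rules of Proposition~\ref{prop:6}\eqref{prop:6c}--\eqref{prop:6d} then yield $\calV_N(a_{n+N-1}T_n) \leq c f_m$. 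For $a_{n+N-1} R_n$, each summand contains at least two factors $a_j\tilde B_j$ paired with correspondingly many factors $1/a_j$; the same product-rule bookkeeping, aided by the extra smallness coming from one surplus factor $1/a_j$, gives $\calV_N(a_{n+N-1} R_n) \leq c f_m$ as well. Finiteness of $\calV_N$ within each residue class mod $N$ upgrades the pointwise convergence to uniform convergence on $K$. The \textbf{main technical obstacle} is precisely this last bookkeeping: one must pair the $N$-periodic, zero-variation factors with the variable ones so that only the three quantities appearing in $f_m$ get differenced, and control the higher-order remainder uniformly in $n$ despite the amplification factor $a_{n+N-1}$.
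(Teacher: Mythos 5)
Your proposal is correct, and it reaches the conclusion by a genuinely different decomposition than the paper. The paper's proof uses the exact telescoping identity for a difference of products, comparing $X_n(\lambda)$ with the \emph{periodic} product $\hat{\calX}^n(\lambda_n)$ evaluated at the rescaled argument $\lambda_n = \lambda\alpha_{n+N-1}/a_{n+N-1}$; the $\lambda$-linear part of the limit then comes entirely from $a_{n+N-1}\bigl(\hat{\calX}^n(\lambda_n)-\gamma\Id\bigr)$, which is analysed separately in Proposition~\ref{prop:1} via Taylor expansion of the polynomial entries, including the variation estimate \eqref{prop:1:eq:5}. You instead expand each factor around $\hat{\calB}^k(0)$ and treat the full multilinear expansion, so the $\lambda$-dependence sits inside the perturbation $\tilde B_k(\lambda)$ and the $\lambda$-linear limit emerges from the first-order terms; your identification of that sum with $\lambda\,(\hat{\calX}^i)'(0)=\hat{\calC}^i(\lambda)$ via the Leibniz rule is clean and only uses the \emph{statement} of Proposition~\ref{prop:1} (really just Proposition~\ref{prop:13}), not its Taylor machinery. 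The price is the quadratic-and-higher remainder $R_n$, but your bound $a_{n+N-1}\lVert R_n\rVert = O(1/a_n)$ is justified because the window ratios $a_{n+j}/a_n$ converge to positive constants by Lemma~\ref{prop:3}, and the variation bookkeeping for $R_n$ goes through Proposition~\ref{prop:6}\eqref{prop:6c}--\eqref{prop:6d} exactly as you describe. The paper's route avoids any remainder but needs the auxiliary rescaling and Proposition~\ref{prop:1} in full; yours is more self-contained at the level of this proposition. One small infelicity: your closing sentence derives uniform convergence on $K$ from finiteness of $\calV_N$, but the proposition's hypotheses (a)--(b) do not guarantee $f_m<\infty$ (the bound \eqref{prop:2:eq:3} is allowed to be vacuous); this is harmless, since your limit argument is already uniform in $\lambda\in K$ — the ratio limits are $\lambda$-independent, the middle-factor limits differ from their targets by $\lambda$-independent quantities, and the remainder bound is uniform on $K$ — so the extra sentence should simply be dropped.
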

\begin{proof}
	Fix compact $K \subset \CC$.
	Let
	\[
		\lambda_n = \lambda \frac{\alpha_{n+N-1}}{a_{n+N-1}}.
	\]
	Let us observe that we have the following telescoping sum
	\begin{equation*}
		X_n(\lambda) =  
		\sum_{j=0}^{N-1} 
		\left[ \prod_{m=j+1}^{N-1} \hat{\calB}^{n+m}(\lambda_n) \right]
		\left( B_{n+j}(\lambda) - \hat{\calB}^{n+j}(\lambda_n) \right)
		\left[ \prod_{m=0}^{j-1} B_{n+m}(\lambda) \right]
		+ \hat{\calX}^n (\lambda_n).
	\end{equation*}
	Hence, by \eqref{prop:2:eq:2}
	\begin{multline} \label{eq:1}
		C_n(\lambda) = \sum_{j=0}^{N-1} 
		\left[ \prod_{m=j+1}^{N-1} \hat{\calB}^{n+m}(\lambda_n) \right]
		a_{n+N-1} \left( B_{n+j}(\lambda) - \hat{\calB}^{n+j}(\lambda_n) \right)
		\left[ \prod_{m=0}^{j-1} B_{n+m}(\lambda) \right] \\
		+ a_{n+N-1} \left( \hat{\calX}^n (\lambda_n) - \gamma \Id \right).
	\end{multline}
	For every $i$ we set
	\begin{equation}
		\calC_i(\lambda) = \lim_{k \to \infty} a_{(k+1)N+j-1} \big(X_{kN+i}(\lambda) - \gamma \Id \big) = 
		\lim_{k \to \infty} C_{kN+i}(\lambda).
	\end{equation}
	First, we show that the convergence is uniform on $K$. It is enough to analyse 
	each factor in \eqref{eq:1} separately. Let us consider $n = kN + i$. Lemma~\ref{prop:3} 
	implies that for every~$j$
	\[
		\lim_{k \rightarrow \infty} \hat{\calB}^{kN+i+j}(\lambda_{k N+i+j}) =
		\lim_{k \rightarrow \infty} B_{kN+i+j}(\lambda) =
		\begin{pmatrix}
			0 & 1 \\
			-r_{i+j} & -q_{i+j}
		\end{pmatrix}
	\]
	and the convergence is uniform on $K$. 
	Next, for every $j$
	\begin{multline} \label{eq:3}
		a_{n+N-1} \left( B_{n+j}(\lambda) - \hat{\calB}^{n+j}(\lambda_n) \right) = \\
		\frac{a_{n+N-1}}{a_{n+j}}
		\begin{pmatrix}
			0 & 0 \\
			r_{n+j} a_{n+j} - a_{n+j-1} & \lambda 
			\left( 1 - \frac{a_{n+j}}{a_{n+N-1}} \frac{\alpha_{n+N-1}}{\alpha_{n+j}} \right) + 
			q_n a_{n+j} - b_{n+j}
		\end{pmatrix},
	\end{multline}
	which implies
	\[
		\lim_{k \rightarrow \infty} a_{(k+1)N+i+j-1} 
		\left( B_{kN+i+j}(\lambda) - \hat{\calB}^{kN+i+j}(\lambda_{kN+i}) \right) =
		\frac{\alpha_{i-1}}{\alpha_{i+j}}
		\begin{pmatrix}
			0 & 0 \\
			s_{i+j} & z_{i+j}
		\end{pmatrix}
	\]
	uniformly on $K$. Finally, Proposition~\ref{prop:1} implies
	\[
		\lim_{k \rightarrow \infty} 
		a_{(k+1)N+i-1} \left( \hat{\calX}^{kN+i} (\lambda_{kN+i}) - \gamma \Id \right) =
		\hat{\calC}^i(\alpha_{i-1} \lambda) = \alpha_{i-1} \hat{\calC}^i(\lambda).
	\]
	It proves \eqref{prop:2:eq:1}.
	
	Let us turn to the proof of \eqref{prop:2:eq:3}. We now consider an algebra $C(K; M_{2}(\CC))$
	with the norm $\normM{\cdot}$ (see \eqref{eq:5}). We will use the total $N$-variation with respect 
	to this norm.
	
	As a consequence of Proposition~\ref{prop:6}, to estimate \eqref{prop:2:eq:3}, 
	it is enough to show that each factor in \eqref{eq:1} has the supremum and the total 
	$N$-variation bounded. 
	Every factor in \eqref{eq:1} is uniformly convergent, hence it has the supremum bounded.
	In view of Proposition~\ref{prop:6}, it is enough to	prove that there is $c > 0$ such that
	for all $j \in \{ 0, 1, \ldots, N-1 \}$
	\begin{gather}
		\label{eq:2a}
		\calV_N\big(B_n(\lambda) : n \geq m) \leq c f_m, \\
		\label{eq:2b}
		\calV_N\big(\hat{\calB}^{n+j}(\lambda_n) : n \geq m) \leq c f_m, \\		
		\label{eq:2c}
		\calV_N \left( a_{n+N-1} \big(B_{n+j}(\lambda) - \hat{\calB}^{n+j}(\lambda_n) \big) : 
		n \geq m \right) \leq c f_m, \\
		\label{eq:2d}
		\calV_N \left( a_{n+N-1} \left( \hat{\calX}^n (\lambda_n) - \gamma \Id \right) : 
		n \geq m \right) \leq c f_m.
	\end{gather}
	We have	
	\[
		\big\lVert B_{n+N}(\lambda) - B_n(\lambda) \big\rVert
		\leq
		\bigg|\frac{a_{n+N-1}}{a_{n+N}} - \frac{a_{n-1}}{a_n} \bigg|
		+
		\abs{\lambda}\bigg|\frac{1}{a_{n+N}} - \frac{1}{a_n} \bigg|
		+
		\bigg|\frac{b_{n+N}}{a_{n+N}} - \frac{b_n}{a_n} \bigg|,
	\]
	thus by Lemma~\ref{prop:3} and Proposition~\ref{prop:6} we get
	\eqref{eq:2a}. Similarly, for \eqref{eq:2b} we obtain
	\[
		\big\lVert \hat{\calB}^{n+N+j}(\lambda_n) - \hat{\calB}^{n+j}(\lambda_n) \big\rVert
		\leq
		\abs{\lambda} \alpha_{n+N-1} \bigg|\frac{1}{a_{n+2N-1}} - \frac{1}{a_{n+N-1}} \bigg|.
	\]
	Let us turn to the proof of the estimate \eqref{eq:2d}. For every $i \in \{0, 1, \ldots, N-1\}$ 
	we define
	\[	
		\hat{a}^i = \left( \frac{a_{(k+1)N+i-1}}{\alpha_{i-1}} : k \geq 0 \right).
	\]
	Let $(\hat{C}^{i}_n(\lambda) : n \geq 0)$ denote the corresponding sequence of the matrices 
	\eqref{prop:1:eq:2}. Then
	\[
		a_{(k+1)N+i-1} \left( \hat{\calX}^{kN+i} (\lambda_{kN+i}) - \gamma \Id \right)
		= \alpha_{i-1} \hat{C}^i_k(\lambda).
	\]
	Assumption~\eqref{prop:2:a} imply that for every $i$ the sequence $\tilde{a}^i$ satisfies
	the hypothesis of Proposition~\ref{prop:1}. By Proposition~\ref{prop:1},
	\[
		\sup_{\lambda \in K} \norm{\hat{C}^i_{k+1}(\lambda) - 
		\hat{C}^i_{k}(\lambda)} \leq 
		c_i 
		\left| \frac{1}{a_{(k+2) N + i - 1}} - \frac{1}{a_{(k+1) N + i - 1}} \right|
	\]
	for a constant $c_i > 0$. Hence,
	\begin{multline*}
		\calV_N \left( a_{n+N-1} \left( \hat{\calX}^n (\lambda_n) - \gamma \Id \right) : 
		n \geq m \right) = 
		\sum_{i= m \bmod{N}}^{N-1} \alpha_{i-1} \sup_{\lambda \in K} \norm{\hat{C}^i_{\floor{m/N}+1}(\lambda) 
		- \hat{C}^i_{\floor{m/N}}(\lambda)} \\
		+ \sum_{i=0}^{N-1} \sum_{k=\floor{m/N}+1}^\infty \alpha_{i-1} \sup_{\lambda \in K} 
		\| \tilde{C}^i_{k+1}(\lambda) - \tilde{C}^i_{k}(\lambda) \| 
		\leq c \calV_N \left( \frac{1}{a_n} : n \geq m \right)
	\end{multline*}
	for a constant $c>0$ independent of $m$.
	Finally, \eqref{eq:2c} is a consequence of \eqref{eq:3} applied to Lemma~\ref{prop:3} 
	and Proposition~\ref{prop:6}. The proof is complete.
\end{proof}

\subsection{Non-degeneracy of the quadratic forms}
Let $\Lambda \subset \RR$ be an open set. According to \cite{Swiderski2017}, we say that 
$\{ Q^\lambda : \lambda \in \Lambda \}$ (see \eqref{eq:45})
is \emph{uniformly non-degenerated} on $K \subset \Lambda$ if there are $c \geq 1$ and 
$M \geq 1$ such that for all $v \in \RR^2$, $\lambda \in K$ and $n \geq M$ 
\[
	c^{-1} \norm{v}^2 \leq \abs{Q_n^\lambda(v)} \leq c \norm{v}^2.
\]
We say that $\{Q^\lambda : \lambda \in \Lambda\}$ is \emph{almost uniformly non-degenerated} 
on $\Lambda$ if it is uniformly non-degenerated on each compact subset of $\Lambda$.

Observe that $\sprod{E v}{v} = 0$ (see \eqref{eq:45}). This simple observation lies behind the 
following non-degeneracy test, whose proof is exactly the same as the one given for 
\cite[Proposition 2]{Swiderski2017}. Notice that in \cite{Swiderski2017} an additional assumption 
was needed, because it was stated for a slightly different~$Q$.
\begin{proposition} \label{prop:5}
	Suppose that there is $\gamma \in \RR$ such that for all $j \in \{0, \ldots, N-1\}$
	\[
		\lim_{k \to \infty} a_{(k+1)N + j-1}(X_{k N + j}(\cdot) - \gamma \Id) = \calC_j(\cdot)
	\]
	uniformly on compact subsets of $\Lambda \subset \RR$. If for all $j \in \{0, \ldots, N-1\}$
	and all $\lambda \in \Lambda$
	\begin{equation} \label{prop:5:eq:1}
		\discr \calC_j(\lambda) < 0,
	\end{equation}
	then the family $\{Q^\lambda : \lambda \in \Lambda\}$ is almost uniformly non-degenerated on $\Lambda$.
\end{proposition}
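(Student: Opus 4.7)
The plan is to reduce the claim about the sequence of quadratic forms $Q_n^\lambda$ to a single algebraic identity relating $\discr \calC_j(\lambda)$ with the determinant of the symmetric matrix that represents the limit form, and then transfer that information back by the assumed uniform convergence.

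First I would exploit the antisymmetry of $E$: since $E^* = -E$ we have $\sym{\gamma E} = 0$ for every $\gamma \in \RR$. Writing $X_n(\lambda) = \gamma \Id + (X_n(\lambda) - \gamma \Id)$ in the definition \eqref{eq:45}, the matrix representing the quadratic form $Q_n^\lambda$ equals
\[
    \sym{a_{n+N-1} E X_n(\lambda)} = \sym{E \cdot a_{n+N-1} (X_n(\lambda) - \gamma \Id)}.
\]
Hence, for $n = kN+j$, the hypothesis forces the uniform convergence on compact subsets of $\Lambda$ of the matrix representing $Q_{kN+j}^\lambda$ to the symmetric real matrix $M_j(\lambda) := \sym{E \calC_j(\lambda)}$.

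Next comes the algebraic core of the proof. For a $2 \times 2$ matrix $C = \bigl(\begin{smallmatrix} a & b \\ c & d \end{smallmatrix}\bigr)$ a direct computation gives
\[
    \sym{E C} = \begin{pmatrix} -c & (a-d)/2 \\ (a-d)/2 & b \end{pmatrix}, \qquad
    \det \sym{E C} = -bc - \tfrac{1}{4}(a-d)^2 = -\tfrac{1}{4} \discr(C).
\]
Applying this with $C = \calC_j(\lambda)$ and invoking \eqref{prop:5:eq:1} yields $\det M_j(\lambda) > 0$ for every $\lambda \in \Lambda$ and every $j \in \{0, \ldots, N-1\}$. For a real symmetric $2 \times 2$ matrix this means the two eigenvalues have the same sign, i.e.\ the form $v \mapsto \sprod{M_j(\lambda) v}{v}$ is definite.

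Finally, I fix a compact $K \subset \Lambda$. By continuity, there are constants $0 < c_1 \leq c_2$ such that $c_1 \norm{v}^2 \leq \abs{\sprod{M_j(\lambda) v}{v}} \leq c_2 \norm{v}^2$ for every $\lambda \in K$, every $v \in \RR^2$ and every $j$; the lower bound is obtained from the fact that the smaller eigenvalue of $M_j(\lambda)$ in modulus is bounded below by $\det M_j(\lambda) / \norm{M_j(\lambda)}$, and both quantities are continuous and strictly positive on the compact set $K$. The uniform convergence established in the first step then implies that for sufficiently large $n$ (uniformly in $j$) the quadratic form $Q_n^\lambda$ satisfies $\tfrac{1}{2} c_1 \norm{v}^2 \leq |Q_n^\lambda(v)| \leq 2 c_2 \norm{v}^2$ for $\lambda \in K$, which is precisely uniform non-degeneracy on $K$; since $K$ was arbitrary, almost uniform non-degeneracy on $\Lambda$ follows.

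There is no real obstacle: the only nontrivial ingredient is the identity $\det \sym{EC} = -\tfrac{1}{4}\discr(C)$, which is a short calculation, and the argument uses nothing more than the antisymmetry of $E$, elementary spectral theory of real symmetric $2 \times 2$ matrices and a routine uniform-convergence-on-compacts argument. (The proof is a straightforward adaptation of \cite[Proposition 2]{Swiderski2017}, the minor difference being that here one cannot use any additional hypothesis on $\gamma$ beyond $\gamma \in \RR$, which is already enough since $E$ is antisymmetric.)
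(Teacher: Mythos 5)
Your proof is correct and follows exactly the route the paper indicates: the paper gives no proof of its own but points to \cite[Proposition 2]{Swiderski2017} and records the key observation $\sprod{Ev}{v}=0$, which is precisely your first step (the $\gamma\Id$ part drops out by antisymmetry of $E$), after which the identity $\det\sym{EC}=-\tfrac14\discr(C)$ and the uniform-convergence-on-compacts argument are the standard completion. The computations check out, so this is a faithful, self-contained version of the cited argument rather than a genuinely different one.
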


We are ready to prove the main result of this section.
\begin{proposition} \label{prop:7}
	Let $\calC_i(\lambda)$ be defined in \eqref{prop:2:eq:1}. Let 
	\begin{equation} \label{prop:7:eq:1}
		h_i(\lambda) = -\discr[\calC_i(\lambda)], \quad (i=0,1,\ldots,N-1).
	\end{equation}
	Then for every $i$
	\begin{equation} \label{prop:7:eq:2}
		h_0(\lambda) = (r_0 r_1 \cdots r_{i-1})^2 h_i(\lambda).
	\end{equation}
	Consequently, the family $\big\{Q^\lambda : \lambda \in \Lambda \big\}$ is almost uniformly 
	non-degenerated for
	\begin{equation} \label{prop:7:eq:3}
		\Lambda = \{ \lambda \in \RR : h_0(\lambda) > 0 \}.
	\end{equation}
\end{proposition}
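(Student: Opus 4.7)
The plan is to reduce everything to a conjugation identity
\[
	\calD_i + \hat{\calC}^i(\lambda) = P_i(0)\,\big[\calD_0 + \hat{\calC}^0(\lambda)\big]\,P_i^{-1}(0),
\]
where $P_i(\lambda) = \prod_{m=0}^{i-1}\hat{\calB}^m(\lambda)$, and then to apply the conjugation invariance of the discriminant together with \eqref{part3:eq:4}. Since $\calC_i(\lambda) = \alpha_{i-1}\big(\calD_i + \hat{\calC}^i(\lambda)\big)$ by \eqref{prop:2:eq:1}, the homogeneity of the discriminant will give $\discr\calC_i(\lambda) = \alpha_{i-1}^2\,\discr\calC_0(\lambda)$, and the identification $\alpha_{i-1} = 1/(r_0 r_1 \cdots r_{i-1})$ — which follows by taking determinants of \eqref{thm:C:eq:1} to see that $r_0 r_1 \cdots r_{N-1} = 1$ and then using $\alpha_{i-1} = \prod_{k=i}^{N-1} r_k$ from Proposition~\ref{prop:4} — will yield \eqref{prop:7:eq:2}.

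For the $\hat{\calC}^i$ piece, the starting observation is the cyclic conjugation identity
\[
	\hat{\calX}^i(\lambda) = P_i(\lambda)\,\hat{\calX}^0(\lambda)\,P_i^{-1}(\lambda)
\]
which follows from $N$-periodicity of the $\hat{\calB}^m$'s. Differentiating at $\lambda = 0$ and using $\hat{\calX}^0(0) = \gamma\Id$, the $P_i'(0)P_i^{-1}(0)$ contributions cancel, leaving $(\hat{\calX}^i)'(0) = P_i(0)\,(\hat{\calX}^0)'(0)\,P_i^{-1}(0)$. Since Proposition~\ref{prop:1} identifies $\hat{\calC}^i(\lambda) = \lambda(\hat{\calX}^i)'(0)$, the desired conjugation for $\hat{\calC}^i$ follows.

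The harder step is to show $\calD_i = P_i(0)\,\calD_0\,P_i^{-1}(0)$ from the explicit formula \eqref{prop:2:eq:4}. The main obstacle here is a careful bookkeeping of the cyclic shifts. The strategy is to use $P_i(0)^{-1} = \gamma\prod_{m=i}^{N-1}\hat{\calB}^m(0)$ (a consequence of $\gamma^2 = 1$ and $\hat{\calX}^0(0) = \gamma\Id$), expand $P_i(0)\calD_0 P_i^{-1}(0)$ term by term, and then for each $l \in \{0,\ldots,N-1\}$ regroup the products using $N$-periodicity of $\hat{\calB}^m$ to rewrite them as products running over a shifted cyclic window. For indices $l \geq i$ the shifted product matches a term of $\calD_i$ after absorbing one factor of $\hat{\calX}^l(0) = \gamma\Id$ (which cancels against the $\gamma$ coming from $P_i^{-1}(0)$); for indices $l < i$, writing $l = l' \equiv N+l' \pmod N$ and using $\prod_{n=l'+1}^{N+i-1}\hat{\calB}^n(0) = \gamma\prod_{n=N+l'+1}^{N+i-1}\hat{\calB}^n(0)$ (again via $\hat{\calX}^{l'+1}(0) = \gamma\Id$) likewise recovers the matching term in the $l = N+l'$ part of $\calD_i$. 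After this bijection between summands, the two expressions coincide.

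With both conjugation identities in hand, \eqref{prop:7:eq:2} follows immediately. The second assertion is then a straightforward application of Proposition~\ref{prop:5}: by Proposition~\ref{prop:2} the uniform convergence hypothesis holds on each compact subset of $\RR$, and by \eqref{prop:7:eq:2} the positivity $h_0(\lambda) > 0$ on $\Lambda$ is equivalent to $h_j(\lambda) > 0$ for every $j$, which is exactly $\discr\calC_j(\lambda) < 0$. Hence the family $\{Q^\lambda : \lambda \in \Lambda\}$ is almost uniformly non-degenerated on $\Lambda$.
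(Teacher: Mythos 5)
Your proposal is correct and follows essentially the same route as the paper: the key point in both is that $\calD_i+\hat{\calC}^i(\lambda)$ is conjugate to $\calD_0+\hat{\calC}^0(\lambda)$ via products of the matrices $\hat{\calB}^m(0)$ (the paper does this one step at a time, conjugating by a single $\hat{\calB}^i(0)$ and using that $\prod_{m=0}^{N-1}\hat{\calB}^{i+m}(0)=\gamma\Id$ commutes with everything, whereas you conjugate by $P_i(0)$ all at once, which makes the $\calD$-bookkeeping slightly heavier but is the same mechanism), after which conjugation invariance and homogeneity of the discriminant, together with $\alpha_{i-1}=1/(r_0\cdots r_{i-1})$ from Proposition~\ref{prop:4}, give \eqref{prop:7:eq:2}, and Proposition~\ref{prop:5} gives the non-degeneracy.
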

\begin{proof}
	To show \eqref{prop:7:eq:2} it is enough to prove that for every $i$
	\begin{equation} \label{eq:7}
		\frac{1}{\alpha_i} [\hat{\calB}^i(0)]^{-1} \calC_{i+1}(\lambda) \hat{\calB}^i(0) = 
		\frac{1}{\alpha_{i-1}} \calC_i(\lambda).
	\end{equation}
	
	We have
	\begin{equation} \label{eq:44}
		\frac{1}{\alpha_{i-1}}\calC_{i}(\lambda) = \calD_i + \hat{\calC}^{i}(\lambda).
	\end{equation}
	Let us begin with the first term. We have that 
	$[ \hat{\calB}^i (0) ]^{-1} \calD_i \hat{\calB}^i (0)$ equals
	\[
		\sum_{j=0}^{N-1}
		\frac{1}{\alpha_{i+j+1}}
		\left[ \hat{\calB}^i (0) \right]^{-1}
		\left[
			\prod_{m=j+1}^{N-1} \hat{\calB}^{i+m+1} (0)
		\right]
		\begin{pmatrix}
			0 & 0 \\
			s_{i+j+1} & z_{i+j+1}
		\end{pmatrix}
		\left[
			\prod_{m=0}^{j-1} \hat{\calB}^{i+m+1} (0)
		\right]
		\hat{\calB}^i (0).
	\]
	By shifting the indexing of $j$ and $m$ by $1$ we obtain that the last expression equals
	\begin{multline} \label{eq:8}
		\sum_{j=1}^{N-1}
		\frac{1}{\alpha_{i+j}}
		\left[
			\prod_{m=j+1}^{N-1} \hat{\calB}^{i+m} (0)
		\right]
		\begin{pmatrix}
			0 & 0 \\
			s_{i+j} & z_{i+j}
		\end{pmatrix}
		\left[
			\prod_{m=0}^{j-1} \hat{\calB}^{i+m} (0)
		\right]
		\\ + 
		\left[ \hat{\calB}^i (0) \right]^{-1}
		\begin{pmatrix}
			0 & 0 \\
			s_{i+N} & z_{i+N}
		\end{pmatrix}		
		\left[
			\prod_{m=0}^{N-1} \hat{\calB}^{i+m} (0)
		\right].
	\end{multline}
	Because
	\[
		\prod_{m=0}^{N-1} \hat{\calB}^{i+m} (0) = \gamma \Id
	\]
	it commutes with every matrix. Therefore, by \eqref{eq:8}
	\begin{equation} \label{eq:6}
		\left[ \hat{\calB}^i (0) \right]^{-1}
		\calD_{i+1}
		\hat{\calB}^i (0)  =
		\calD_i 
	\end{equation}
	
	Let us consider the second term of $\calC_{i}(\lambda)$. By $N$-periodicity of $\alpha$ and $\beta$ 
	we obtain
	\[
		 \hat{\calX}^i(\lambda) =
		[\hat{\calB}^i(\lambda)]^{-1} \hat{\calX}^{i+1}(\lambda) \hat{\calB}^i(\lambda).
	\]
	Hence,
	\[
		\hat{C}^{i}_k(\lambda) = 
		\left[ \hat{\calB}^i \left( \frac{\lambda}{\hat{a}_k} \right) \right]^{-1}
		\hat{C}^{i+1}_k(\lambda)
		\hat{\calB}^i \left( \frac{\lambda}{\hat{a}_k} \right).
	\]	
	By tending with $k$ to infinity, we obtain
	\[
		\hat{\calC}^{i}(\lambda) = 
		\left[ \hat{\calB}^i (0) \right]^{-1}
		\hat{C}^{i+1} (\lambda)
		\hat{\calB}^i (0).
	\]
	This combined with and \eqref{eq:44} and \eqref{eq:6} implies \eqref{eq:7}.
\end{proof}

Finally, for the further reference we collect the results of this section in a self-contained
form.
\begin{corollary}	\label{cor:2}
	Let $N$ be a positive integer. Suppose that $(r_n : n \geq 0)$ and $(q_n : n \geq 0)$ are
	$N$-periodic sequences such that
	\[
		\prod_{i=0}^{N-1} 
		\begin{pmatrix}
			0 & 1 \\
			-r_i & -q_i
		\end{pmatrix} 
		=
		\gamma \Id, \quad \gamma \in \{-1, 1 \}.
	\]
	Assume
	\begin{enumerate}[(a)]
		\item $\begin{aligned}[b]
			\lim_{n \rightarrow \infty} a_n = \infty,
		\end{aligned}$
		
		\item $\begin{aligned}[b]
			\lim_{n \rightarrow \infty} |r_n a_n - a_{n-1} - s_n| = 0,
		\end{aligned}$
		
		\item $\begin{aligned}[b]
			\lim_{n \rightarrow \infty} |b_n - q_n a_n - z_n| = 0
		\end{aligned}$
	\end{enumerate}
	for $N$-periodic sequences $s$ and $z$.
	Then the family $\{ Q^\lambda : \Lambda \}$ defined in \eqref{eq:45} is almost uniformly non-degenerated 
	for
	\[
		\Lambda = \RR \setminus I(s,z)
	\]
	where $I(s,z)$ is an explicitly computable compact interval depending on the sequences $s$ and 
	$z$. Moreover, if $s \equiv 0$ and $z \equiv 0$, then $I(s,z) = \{ 0 \}$.
\end{corollary}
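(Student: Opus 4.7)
The plan is to assemble this corollary from the machinery already developed. First I note that the product hypothesis implies $r_0 r_1 \cdots r_{N-1} = 1$, since the determinant of each factor $\begin{pmatrix} 0 & 1 \\ -r_i & -q_i \end{pmatrix}$ is $r_i$ and the determinant of $\gamma \Id$ is $\gamma^2 = 1$. Thus Proposition~\ref{prop:4} produces $N$-periodic sequences $\alpha, \beta$ with $r_i = \alpha_{i-1}/\alpha_i$ and $q_i = \beta_i/\alpha_i$, and the associated transfer matrices $\hat{\calB}^i(0)$ multiply to $\gamma \Id$. Hence the hypotheses of Proposition~\ref{prop:2} are met.

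Next I invoke Proposition~\ref{prop:2} to obtain $C_{kN+i}(\lambda) \to \calC_i(\lambda)$ uniformly on compact subsets of $\CC$, for each $i \in \{0, \ldots, N-1\}$. With $h_i(\lambda) := -\discr[\calC_i(\lambda)]$ as in Proposition~\ref{prop:7}, the identity $h_0(\lambda) = (r_0 \cdots r_{i-1})^2 h_i(\lambda)$ shows that the zero sets (and sign sets) of the $h_i$ all coincide. Define
\[
  \Lambda := \{\lambda \in \RR : h_0(\lambda) > 0\}, \qquad I(s,z) := \RR \setminus \Lambda.
\]
On $\Lambda$ one has $\discr \calC_i(\lambda) < 0$ for every $i$, so Proposition~\ref{prop:5} yields that $\{Q^\lambda : \lambda \in \Lambda\}$ is almost uniformly non-degenerated.

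It remains to argue that $I(s,z)$ is a compact interval. By Proposition~\ref{prop:1} the matrix $\hat{\calC}^0(\lambda)$ is linear in $\lambda$, and $\calD_0$ is $\lambda$-independent, so $\calC_0(\lambda) = \alpha_{-1}(\calD_0 + \lambda M)$ is affine in $\lambda$; consequently $h_0$ is a polynomial of degree at most $2$. To see the leading coefficient is strictly positive I would invoke Proposition~\ref{prop:10}: it gives $\tr M = 0$ (because the diagonal entries of $M$ are $(w_N)'(0)$ and $-(\alpha_{N-1}/\alpha_0)(w_{N-2}^{[1]})'(0)$, which are equal up to sign), so the $\lambda^2$-coefficient of $(\tr \calC_0)^2$ vanishes; while the inequality $w_{N-1}'(0)(w_{N-1}^{[1]})'(0) > w_N'(0)(w_{N-2}^{[1]})'(0)$ from the same proposition forces $\det M > 0$, contributing a strictly positive $\lambda^2$-coefficient to $-4 \det \calC_0$. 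Hence $h_0$ is a genuine quadratic with positive leading coefficient, making $\{h_0 \leq 0\}$ either empty, a point, or a bounded closed interval.

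Finally, for the special case $s \equiv 0 \equiv z$, one has $\calD_i = 0$, so $\calC_0(\lambda) = \alpha_{-1} \lambda M$; since $\tr M = 0$ and $\det M > 0$, this gives $h_0(\lambda) = 4 \alpha_{-1}^2 \det M \cdot \lambda^2$, which is positive precisely when $\lambda \neq 0$, yielding $I(0,0) = \{0\}$. The main technical obstacle in this proof is the leading-coefficient analysis via Proposition~\ref{prop:10}; the rest is a bookkeeping assembly of earlier statements.
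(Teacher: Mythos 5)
Your proposal is correct and follows essentially the same route as the paper: reduce to Proposition~\ref{prop:7}, write $h_0 = 4\det[\calC_0] - (\tr[\calC_0])^2$, use the first identity of Proposition~\ref{prop:10} to kill the $\lambda$-dependence of the trace and its second inequality to get $\det[\hat{\calC}^0(1)]>0$, hence a quadratic $h_0$ with positive leading coefficient, degenerating to $c\lambda^2$ when $s\equiv z\equiv 0$. The only cosmetic difference is that you re-derive the non-degeneracy on $\Lambda$ from Proposition~\ref{prop:5} instead of just quoting the conclusion of Proposition~\ref{prop:7}.
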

\begin{proof}
	By Proposition~\ref{prop:7} we have to compute when $h_0(\lambda)$ is positive. We have
	\begin{equation} \label{eq:55}
		h_0(\lambda) = -\discr[\calC_0(\lambda)] = 4 \det[\calC_0(\lambda)] - (\tr[\calC_0(\lambda)])^2.
	\end{equation}
	
	Let us begin with the computation of the trace. By Proposition~\ref{prop:2} (see \eqref{prop:2:eq:1})
	\begin{equation} \label{eq:56}
		\calC_0(\lambda) = \alpha_{N-1} \calD_0 + \alpha_{N-1} \hat{\calC}^0(\lambda).
	\end{equation}
	Hence, by Proposition~\ref{prop:1} (see \eqref{prop:1:eq:1}) and Proposition~\ref{prop:10}
	\begin{equation} \label{eq:57}
		\tr[\calC_0(\lambda)] = \alpha_{N-1} \tr[\hat{\calC}^0(\lambda)] + \alpha_{N-1} \tr[\calD_0] 
		= \alpha_{N-1} \tr[\calD_0].
	\end{equation}
	
	Let us turn to the determinant. Observe that Proposition~\ref{prop:1} (see \eqref{prop:1:eq:1})
	implies that $\hat{\calC}^0(\lambda) = \lambda \hat{\calC}^0(1)$. Hence, using \eqref{eq:55} and 
	computing formally the determinant, we obtain
	\[
		\alpha_{N-1}^{-2} \det[\calC_0(\lambda)] = \det[\hat{\calC}^0(1)] \lambda^2 + c \lambda + 
		\det[\calD_0]
	\]
	for some constant $c \in \RR$.
	Therefore, by \eqref{eq:55}--\eqref{eq:57}
	\[
		\alpha_{N-1}^{-2} h_0(\lambda) = (4 \det[\hat{\calC}^0(1)]) \lambda^2 + (4 c) \lambda -
		\discr[\calD_0].
	\]
	Now, by Proposition~\ref{prop:10} we obtain that $\det[\hat{\calC}^0(1)] > 0$. Moreover, if
	$s \equiv 0$ and $z \equiv 0$, then $\calD_0 = 0$, and consequently,
	\[
		\alpha_{N-1}^{-2} h_0(\lambda) = \discr[\hat{\calC}^0(1)] \lambda^2,
	\]
	which by Proposition~\ref{prop:10} is strictly positive for $\lambda \neq 0$. The proof is complete.
\end{proof}

\subsection{The proof of the convergence}
The following lemma is an abstraction of the proof of \cite[Theorem 3]{Swiderski2017}.
\begin{lemma} \label{lem:2}
	Let 
	\begin{equation} \label{lem:2:eq:1}
		X_n = \gamma \Id + \frac{1}{a_{n+N-1}} C_n
	\end{equation}
	for a constant $\gamma \in \RR$. Then for every generalised eigenvector $u$ associated with $\lambda \in \RR$ 
	and $\alpha \in \RR^2 \setminus \{ 0 \}$
	\[
		S_n(\alpha, \lambda) = 		
		\frac{a_{n+N-1}^2}{a_{n-1}}
		\bigg\langle
		E C_n(\lambda)
		\begin{pmatrix}
			u_{n+N-1} \\
			u_{n+N}
		\end{pmatrix},
		\begin{pmatrix}
			u_{n+N-1} \\
			u_{n+N}
		\end{pmatrix}
		\bigg\rangle
	\]
	and
	\[
		| S_{n+N}(\alpha, \lambda) - S_n(\alpha, \lambda) | \leq 
		a_{n+N-1} \left\| \frac{a_{n+2N-1}}{a_{n+N-1}} C_{n+N}(\lambda) - 
		\frac{a_{n+N-1}}{a_{n-1}} C_n(\lambda) \right\| (u_{n+N-1}^2 + u_{n+N}^2),
	\]
	where $S_n$ is defined in \eqref{eq:17}.
\end{lemma}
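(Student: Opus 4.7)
The plan is to derive both assertions from the single transfer-matrix relation
\[
    w_n := \begin{pmatrix} u_{n+N-1} \\ u_{n+N} \end{pmatrix} = X_n(\lambda)\, v_n, \quad v_n := \begin{pmatrix} u_{n-1} \\ u_n \end{pmatrix},
\]
which holds because $u$ is a generalised eigenvector. Combined with the definitions \eqref{eq:45} and \eqref{eq:17}, this immediately gives $S_n(\alpha,\lambda) = a_{n+N-1}^2\, \sprod{E X_n(\lambda) v_n}{v_n}$, and everything else is algebraic rewriting of this quadratic form using the hypothesis \eqref{lem:2:eq:1} together with the antisymmetry of $E$.

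For the first identity I would rewrite $v_n = X_n^{-1}(\lambda) w_n$ to transfer the quadratic form onto $w_n$:
\[
    \sprod{E X_n v_n}{v_n} = \sprod{E w_n}{X_n^{-1} w_n} = \sprod{(X_n^{-1})^T E w_n}{w_n}.
\]
The key algebraic ingredient is the $2\times 2$-specific identity $E M E^T = (\det M)(M^T)^{-1}$, equivalently $(M^{-1})^T E = \frac{1}{\det M} E M$, which is a direct computation using the explicit form of $E$. Applied with $M = X_n(\lambda)$ together with the telescoping $\det X_n(\lambda) = a_{n-1}/a_{n+N-1}$ (product of $\det B_k = a_{k-1}/a_k$), this yields $\sprod{E X_n v_n}{v_n} = \frac{a_{n+N-1}}{a_{n-1}} \sprod{E X_n w_n}{w_n}$. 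Substituting \eqref{lem:2:eq:1} and using $\sprod{E w_n}{w_n} = 0$ to kill the $\gamma\Id$ contribution produces the claimed formula.

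For the difference $S_{n+N} - S_n$, I would apply the analogous but easier computation to $S_{n+N}$ with ``input'' vector $v_{n+N} = w_n$: substituting \eqref{lem:2:eq:1} directly into $S_{n+N} = a_{n+2N-1}^2 \sprod{E X_{n+N} w_n}{w_n}$ and using $\sprod{E w_n}{w_n} = 0$ gives $S_{n+N} = a_{n+2N-1} \sprod{E C_{n+N}(\lambda) w_n}{w_n}$. Both $S_n$ and $S_{n+N}$ are now quadratic forms in the same vector $w_n$, so subtraction and factoring $a_{n+N-1}$ out of the matrix difference produces
\[
    S_{n+N} - S_n = a_{n+N-1} \sprod{E \Big[\tfrac{a_{n+2N-1}}{a_{n+N-1}} C_{n+N}(\lambda) - \tfrac{a_{n+N-1}}{a_{n-1}} C_n(\lambda)\Big] w_n}{w_n}.
\]
Cauchy--Schwarz together with $\norm{E} = 1$ and $\norm{w_n}^2 = u_{n+N-1}^2 + u_{n+N}^2$ completes the estimate.

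The only genuinely nontrivial step is the $2\times 2$ conjugation identity used to swap the input vector for the output vector in the quadratic form; once that is in hand the rest is bookkeeping and repeated use of the fact that $E$ is antisymmetric.
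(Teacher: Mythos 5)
Your proposal is correct and follows essentially the same route as the paper: the identity $(X^{-1})^{*}E=\tfrac{1}{\det X}EX$ for $2\times2$ matrices, the telescoped determinant $\det X_n=a_{n-1}/a_{n+N-1}$, the antisymmetry of $E$ to kill the $\gamma\Id$ term, and Cauchy--Schwarz for the difference estimate are exactly the steps in the paper's proof. No gaps.
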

\begin{proof}
	By \eqref{lem:2:eq:1}, we obtain
	\begin{align*}
		S_{n+N}(\alpha, \lambda) & = 
		a_{n+2N-1}^2 \bigg\langle E X_{n+N}(\lambda)
		\begin{pmatrix}
			u_{n+N-1} \\
			u_{n+N}
		\end{pmatrix},
		\begin{pmatrix}
			u_{n+N-1} \\
			u_{n+N}
	\end{pmatrix}
		\bigg\rangle \\
		& = a_{n+2N-1}
		\bigg\langle
		E C_{n+N}(\lambda)
		\begin{pmatrix}
			u_{n+N-1} \\
			u_{n+N}
		\end{pmatrix},
		\begin{pmatrix}
			u_{n+N-1} \\
			u_{n+N}
		\end{pmatrix}
		\bigg\rangle.
	\end{align*}
	Moreover,
	\begin{align*}
   	S_n(\alpha, \lambda) & = a_{n+N-1}^2
		\bigg\langle 
		E 
		\begin{pmatrix}
			u_{n+N-1} \\
			u_{n+N}
		\end{pmatrix},
		X^{-1}_n(\lambda)
		\begin{pmatrix}
			u_{n+N-1} \\
			u_{n+N}
		\end{pmatrix}
		\bigg\rangle \\
		& =
		a_{n+N-1}^2 
		\bigg\langle
		\big[ X^{-1}_n(\lambda) \big]^* E
		\begin{pmatrix}
			u_{n+N-1} \\
			u_{n+N}
		\end{pmatrix},
		\begin{pmatrix}
			u_{n+N-1} \\
			u_{n+N}
		\end{pmatrix}
		\bigg\rangle.
	\end{align*}
	Observe that for every invertible matrix $X \in M_{2}(\mathbb{R})$ we have 
	\[
      \frac{1}{\det X} E X = [X^{-1}]^* E.
	\]
	Therefore,
	\begin{align*}
		S_n(\alpha, \lambda) & = \frac{a_{n+N-1}^3}{a_{n-1}} 
		\bigg\langle
   		E X_n(\lambda)
		\begin{pmatrix}
			u_{n+N-1} \\
			u_{n+N}
		\end{pmatrix},
		\begin{pmatrix}
			u_{n+N-1} \\
			u_{n+N}
		\end{pmatrix}
		\bigg\rangle \\
		& =
		\frac{a_{n+N-1}^2}{a_{n-1}}
		\bigg\langle
		E C_n(\lambda)
		\begin{pmatrix}
			u_{n+N-1} \\
			u_{n+N}
		\end{pmatrix},
		\begin{pmatrix}
			u_{n+N-1} \\
			u_{n+N}
		\end{pmatrix}
		\bigg\rangle
	\end{align*}
	and by Schwarz inequality the result follows.
\end{proof}

Finally, we are ready to prove Theorem~\ref{thm:C}. The techniques are the same as used in the proof of
\cite[Theorem 3]{Swiderski2017}.
\begin{proof}[Proof of Theorem~\ref{thm:C}]
	We fix a compact interval $I \subset \Lambda$, a compact connected set
	$\Omega \subset \RR^2 \setminus \{ 0 \}$ and 
	we consider a sequence of functions $(S_n : n \in \NN)$ defined by \eqref{eq:17}. 
	By Proposition~\ref{prop:7}, the family $\big\{Q^\lambda : \lambda \in I \big\}$ is uniformly 
	non-degenerated. \cite[Theorem 1]{Swiderski2017} implies that we have to show that there
	are $c \geq 1$ and $M > 0$ such that
	\begin{equation} \label{eq:59}
		c^{-1} \leq \abs{S_n(\alpha, \lambda)} \leq c
	\end{equation}
	for all $\alpha \in \Omega$, $\lambda \in I$ and $n > M$. To do so, similarly as in the proof of 
	\cite[Theorem 3]{Swiderski2017}, it is enough to show that
	\begin{equation} \label{eq:60}
		\sum_{n = M}^\infty \sup_{\alpha \in \Omega} \sup_{\lambda \in I} \abs{F_n(\alpha, \lambda)} 
		< \infty,
	\end{equation}
	where $(F_n : n \geq M)$ is a sequence of functions on $\Omega \times I$ 
	defined by
	\[
		F_n = \frac{S_{n + N} - S_n}{S_n}.
	\]
	Indeed,
	\begin{equation} \label{eq:61}
		\prod_{j = 0}^{k-1} (1 + F_{jN+M}) = \prod_{j = 0}^{k-1} \frac{S_{(j+1) N+M}}{S_{j N+M}} 
		= \frac{S_{kN+M}}{S_M}
	\end{equation}
	and the condition \eqref{eq:60} implies that the product \eqref{eq:61} is convergent uniformly
	on $\Omega \times I$ do a continuous function of definite sign. This implies \eqref{eq:59}.

	Since $\big\{Q^\lambda : \lambda \in I\big\}$ is uniformly non-degenerated, we have
	\[
		\abs{S_n(\alpha, \lambda)}
		\geq
		c^{-1}
		\frac{a_{n+N-1}^2}{a_{n-1}} 
		\big(u_{n+N-1}^2 + u_{n+N}^2\big)
	\]
	for all $n \geq M$, $\alpha \in \Omega$ and $\lambda \in I$. Hence, by Lemma~\ref{lem:2}
	\begin{equation} \label{eq:18}
		\big|F_n(\alpha, \lambda) \big|
		\leq
		c 
		\frac{a_{n-1}}{a_{n+N-1}} 
		\bigg\lVert \frac{a_{n+2N-1}}{a_{n+N-1}} C_{n+N}(\lambda) - 
		\frac{a_{n+N-1}}{a_{n-1}} C_n(\lambda) \bigg\rVert,
	\end{equation}
	which, by Proposition~\ref{prop:2} (see \eqref{prop:2:eq:3}), Lemma~\ref{prop:3} and 
	Proposition~\ref{prop:6}, is uniformly summable with respect to $\alpha \in \Omega$ and $\lambda \in I$.
\end{proof}
\begin{corollary} \label{cor:1}
	Under the hypothesis of Theorem \ref{thm:C}, for each $j \in \{0, \ldots, N-1\}$ the subsequence of
	continuous functions $(S_{kN+j} : k \in \NN)$ converges uniformly on compact subsets of
	$(\RR^2 \setminus \{ 0 \} ) \times \Lambda$ to the function $\widetilde{g}^j$ without zeros.
	Moreover, by \eqref{eq:18} for every compact $\Omega \subset \RR^2 \setminus \{ 0 \}$ and a compact 
	interval $I \subset \Lambda$ there is a constant $c>0$ such that for every $m \equiv j \Mod{N}$
	\begin{multline*}
		\sup_{\alpha \in \Omega} \sup_{\lambda \in I} 
		|\widetilde{g}^j(\alpha, \lambda) - S_m(\alpha, \lambda)|
		\leq c \calV_N\bigg(\frac{1}{a_n} : n \geq m \bigg) \\
		+ c \calV_N(a_n - a_{n-1} : n \geq m) + c \calV_N(b_n - q a_n : n \geq m).
	\end{multline*}
\end{corollary}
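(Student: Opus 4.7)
The plan is to deduce the corollary directly from the machinery already built to prove Theorem~\ref{thm:C}, by making the telescoping argument quantitative. Fix a compact set $\Omega \subset \RR^2 \setminus \{0\}$ and a compact interval $I \subset \Lambda$. Theorem~\ref{thm:C} gives constants $c_1, c_2 > 0$ such that $c_1 \leq |S_n(\alpha, \lambda)| \leq c_2$ on $\Omega \times I$ for $n$ large enough. I would then recall the factorisation
\[
    S_{m+kN}(\alpha, \lambda) = S_m(\alpha, \lambda) \prod_{j=0}^{k-1} \bigl(1 + F_{m+jN}(\alpha, \lambda)\bigr)
\]
used inside the proof of Theorem~\ref{thm:C}, where $F_n = (S_{n+N} - S_n)/S_n$ and \eqref{eq:18} provides the pointwise bound on $|F_n|$.

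Next, I would argue uniform convergence. Combining \eqref{eq:18} with Proposition~\ref{prop:2} (which controls the $N$-variation of $C_n(\lambda)$ uniformly in $\lambda \in I$), Lemma~\ref{prop:3} (which handles the ratios $a_{n+N-1}/a_{n-1}$), and Proposition~\ref{prop:6}(d),(e), the numerical series $\sum_{n} \sup_{\Omega \times I} |F_n|$ is finite. A standard estimate then shows that the sequence $(S_{m+kN} : k \geq 0)$ is uniformly Cauchy on $\Omega \times I$; the limit $\widetilde{g}^j$ is continuous, and passing $c_1 \leq |S_{m+kN}| \leq c_2$ to the limit yields $c_1 \leq |\widetilde{g}^j| \leq c_2$, so $\widetilde{g}^j$ has no zeros on $\Omega \times I$. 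Since $\Omega$ and $I$ are arbitrary, this proves the first claim on $(\RR^2 \setminus \{0\}) \times \Lambda$.

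For the rate of convergence, I would telescope:
\[
    |\widetilde{g}^j(\alpha, \lambda) - S_m(\alpha, \lambda)|
    \leq \sum_{k=0}^{\infty} |S_{m+(k+1)N}(\alpha, \lambda) - S_{m+kN}(\alpha, \lambda)|
    = \sum_{k=0}^{\infty} |S_{m+kN}(\alpha,\lambda)|\,|F_{m+kN}(\alpha,\lambda)|.
\]
Using $|S_{m+kN}| \leq c_2$ and substituting \eqref{eq:18}, the right-hand side is bounded above by
\[
    c \sum_{k=0}^{\infty} \frac{a_{m+kN-1}}{a_{m+(k+1)N-1}} \bigg\| \frac{a_{m+(k+2)N-1}}{a_{m+(k+1)N-1}} C_{m+(k+1)N}(\lambda) - \frac{a_{m+(k+1)N-1}}{a_{m+kN-1}} C_{m+kN}(\lambda) \bigg\|,
\]
which is exactly an $N$-variation of the sequence $\bigl(\frac{a_{n+N-1}}{a_{n-1}} C_n(\lambda) : n \geq m\bigr)$ in the Banach algebra $C(I; M_2(\CC))$. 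Applying Proposition~\ref{prop:6}(d) to split this variation, Lemma~\ref{prop:3} to bound the variation of the ratios, and Proposition~\ref{prop:2} to bound $\calV_N(C_n(\cdot) : n \geq m)$ by the quantity $f_m$ yields precisely the claimed estimate (uniformly in $\alpha \in \Omega$ by the factor $|S_{m+kN}| \leq c_2$).

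The main obstacle is bookkeeping rather than depth: we have variation estimates for $C_n$ and for the ratios $a_{n+N-1}/a_{n-1}$ separately, but \eqref{eq:18} presents them multiplied together, so one must apply Proposition~\ref{prop:6}(d) carefully and absorb the $\sup$-norms (which are finite because the ratios tend to a nonzero constant by Lemma~\ref{prop:3} and the $C_n$ converge uniformly by Proposition~\ref{prop:2}) into the constant $c$. Once this is organised, the $f_m$ on the right matches the three variation terms claimed in the statement.
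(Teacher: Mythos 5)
Your proposal is correct and follows essentially the same route the paper intends: the corollary is presented as an immediate consequence of the proof of Theorem~\ref{thm:C}, obtained by telescoping $\widetilde{g}^j - S_m = \sum_k (S_{m+(k+1)N}-S_{m+kN})$, bounding $|S_{m+kN}|$ by the uniform two-sided estimate \eqref{eq:59}, and then feeding \eqref{eq:18} into Proposition~\ref{prop:2} (estimate \eqref{prop:2:eq:3}), Lemma~\ref{prop:3} and Proposition~\ref{prop:6} to recognise the resulting series as an $N$-variation dominated by $f_m$. Your handling of the product of the ratio sequence with $C_n$ via Proposition~\ref{prop:6}(d) is exactly the intended bookkeeping, so there is nothing to add.
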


\section{A formula for density} \label{sec:formDens}
The aim of this section is to prove Theorem~\ref{thm:D}. In fact, it is rather a corollary of the
convergence of Turán determinants proven in Section~\ref{sec:asympGenEig}. Once we have the convergence,
the rest of our proof is very similar to \cite[Theorem 2]{Swiderski2017b}. Since the proof from
\cite{Swiderski2017b} uses various estimates obtained in the proofs (rather than stated explicitly)
of several results, it is easier to present a self-contained proof than to list the necessary changes. 
Moreover, the proof included here is more detailed.

For a positive integer $N$ we define $N$-shifted Turán determinants by
\[
	D^N_n(x) = p_n(x) p_{n+N-1}(x) - p_{n-1}(x) p_{n+N}(x).
\]
Then define
\[
	S_n(x) = a_{n+N-1}^2 D^N_n(x).
\]
By \eqref{eq:17} one has
\[
	S_n(x) = S_n(\alpha(x), x), \quad \alpha(x) = (1, p_1(x)).
\]
In particular, by Corollary~\ref{cor:1}, for every $i$ the limit defining $\widetilde{g}^i(x)$ exists 
for $x \in \Lambda$ and for every compact $K \subset \Lambda$ one has
\[
	\lim_{k \rightarrow \infty} \sup_{x \in K} |S_{kN+i}(x) - \widetilde{g}^i(x)| = 0
\]
with the upper bound on the rate of convergence. Thus, we obtained the first part of Theorem~\ref{thm:D}.
It remains to compute the value of $\widetilde{g}^i(x)$, which is done below.
\begin{proof}[Proof of Theorem~\ref{thm:D}]
	We proceed similarly to the method used in the proof of Theorem~\cite[Theorem 2]{Swiderski2017b}. 
	
	For any $K \geq 0$ consider the "truncated" sequences $a^K$ and $b^K$, i.e. defined by
	\begin{equation} \label{eq:27}
		a^K_{n} = a_n, \quad b^K_{n} = b_n \quad (n \leq K+N-1)
	\end{equation}
	and extended $N$-periodically by
	\begin{equation} \label{eq:28}
		a^K_{K+kN+i} = a_{K+i}, \quad b^K_{K+kN+i} = b_{K+i} \quad 
		(k \geq 1,\ i \in \{ 0, 1, \ldots, N-1 \}).
	\end{equation}
	Let $(p^K_n : n \geq 0)$ be the corresponding sequence of orthonormal polynomials associated with
	$a^K$ and $b^K$. Let $\mu_K$ the the corresponding orthonormalizing measure for $p^K$.
	By \eqref{eq:27} we have that 
	\begin{equation} \label{eq:29}
		p_n(x) = p^K_n(x), \quad (n \leq N+K).
	\end{equation}
	
	Since the Carleman condition for $A$ is satisfied, the measure $\mu$ is determined by its moments. 
	By \eqref{eq:29} the moments of $\mu_K$ and $\mu$ coincide up to order $K+N$.
	Consequently, we have the weak convergence of the sequence 
	$(\mu_K : K \geq 0)$ to the measure $\mu$ (see, e.g. \cite[Theorem 30.2]{Billingsley1995}), 
	i.e. for every compact interval $I \subset \RR$
	\[
		\lim_{K \rightarrow \infty} \mu_K(I) = \mu(I).
	\]
	Our aim is to show that for every $i \in \{0, 1, \ldots, N-1 \}$ and every compact interval 
	$I \subset \Lambda$ one has
	\begin{equation} \label{eq:25}
		\lim_{k \rightarrow \infty} \mu_{kN+i}(I) = \widetilde{\mu}_i(I).
	\end{equation}
	for the Borel measure
	\[
		\widetilde{\mu}_i(S) = \int_{S \cap \Lambda} \frac{\sqrt{h_i(x)}}{2 \pi \widetilde{g}^i(x)} \ud x,
		\quad (S \in \Bor(\RR)).
	\]
	Consequently, the sequence $(\mu_{kN+i}\restriction_\Lambda : k \geq 0)$ converges weakly to 
	$\widetilde{\mu}_i$ (see, e.g. \cite[Example 2.3]{Billingsley1999}). Then by the uniqueness of the 
	weak limit we get $\mu\restriction_\Lambda = \widetilde{\mu}_i$ for every $i$ and the proof is complete. 
	By Lebesgue Dominated Convergence Theorem equation \eqref{eq:25} will be satisfied if the measures 
	$\mu_{kN+i}$ are absolutely continuous on $I$ for sufficiently large $k$ and
	\begin{equation} \label{eq:26}
		\lim_{k \rightarrow \infty} 
		\sup_{x \in I} \left| \mu'_{kN+i}(x) - \frac{\sqrt{h_i(x)}}{2 \pi \widetilde{g}^i(x)} \right| = 0.
	\end{equation}
	 
	It remains to show \eqref{eq:26}. Let $I \subset \Lambda$ be a compact interval. Let
	\begin{equation} \label{eq:31}
		X_n^K(x) = \prod_{j=n}^{n+N-1} B_{j}^K(x),
	\end{equation}
	where $B_j^K$ is the transfer matrix associated with sequences $a^K$ and $b^K$. By \eqref{eq:28} 
	we have $X^K_{K+kN} = X^K_{K+N}$ for $k \geq 1$. Let
	\begin{equation} \label{eq:36}
		\Lambda_K = \{ x \in \RR : \discr[X^K_{K+N}(x)] < 0 \}.
	\end{equation}
	Then by \cite[Theorem 3]{Swiderski2017b} (for the original formulation see 
	\cite[Theorem 6]{GeronimoVanAssche1991})
	\begin{equation} \label{eq:40}
		\widetilde{g}^i_{K}(x) = 
		\lim_{k \rightarrow \infty} |S^K_{K+kN}(x)|, \quad 
		x \in \Lambda_K, \quad (K \equiv i \Mod{N})
	\end{equation}
	exists and defines a continuous positive function. Moreover, the measure $\mu_K$ is absolutely
	continuous on $\Lambda_K$ and its density is equal to
	\begin{equation} \label{eq:33}
		\mu'_{K}(x) = \frac{a_{K+N-1} \sqrt{-\discr[X^K_{K+N}(x)]}}{2 \pi \widetilde{g}^i_K}, \quad 
		x \in \Lambda_K, \quad (K \equiv i \Mod{N}).
	\end{equation}
	
	Consider the decomposition
	\begin{equation} \label{eq:30}
		X_K(x) = \gamma \Id + \frac{1}{a_{K+N-1}} C_K(x), \quad 
		X^K_{K+N}(x) = \gamma \Id + \frac{1}{a_{K+N-1}} C^K_{K+N}(x)
	\end{equation}
	for some matrices $C_K$ and $C^K_{K+N}$, where $X_K$ and $X^K_{K+N}$ are defined in \eqref{eq:32} 
	and \eqref{eq:31}, respectively. Let us show that
	\begin{equation} \label{eq:34}
		\lim_{K \rightarrow \infty} \norm{C^K_{K+N}(\cdot) - C_K(\cdot)} = 0
	\end{equation}
	uniformly on compact subsets of $\RR$.
	From \eqref{eq:28} we get
	\begin{equation} \label{eq:35}
		C^K_{K+N}(x) - C_K(x) = \frac{a_{K+N-1}}{a_K} \left[ \prod_{j=K+1}^{K+N-1} B_j(x) \right] 
		\begin{pmatrix}
			0 & 0 \\
			a_{K-1} - a_{K+N-1} & 0
		\end{pmatrix}.
	\end{equation}	 
	Lemma~\ref{prop:3} implies that for every $j$ the sequence $(B_{mN+j}(x) : m \geq 0)$
	is uniformly convergent (hence bounded) on compact subsets of $\RR$. Therefore, by the 
	assumption~\eqref{thm:D:eq:1} and Lemma~\ref{prop:3} we obtain \eqref{eq:34}.
	 
	Hence, by \eqref{eq:33}, \eqref{eq:30} and \eqref{part3:eq:4}
	\begin{equation} \label{eq:37}
		\mu'_K(x) = \frac{\sqrt{-\discr[C^K_{K+N}(x)]}}{2 \pi \widetilde{g}^i_K(x)}, \quad 
		x \in \Lambda_K, \quad (K \equiv i \Mod{N}).
	\end{equation}
	By \eqref{eq:34}, Proposition~\ref{prop:2} we have uniform convergence on compact subsets of $\RR$
	\begin{equation} \label{eq:38}
		\lim_{k \rightarrow \infty} \sqrt{-\discr[C^{kN+i}_{kN+i+N}(x)]} = 
		\lim_{k \rightarrow \infty} \sqrt{-\discr[C_{kN+i}(x)]} =
		\sqrt{\calC_i(x)} = 
		\sqrt{h_i(x)},
	\end{equation}
	where $h_i$ is defined in \eqref{prop:7:eq:1}. Hence, by \eqref{eq:36} and Proposition~\ref{prop:7} 
	there exists $K_0$ such that for every $K \geq K_0$ one has $I \subset \Lambda_K$. Therefore,
	by \eqref{eq:37}, \eqref{eq:38} and Corollary~\ref{cor:1} to prove \eqref{eq:26} it is enough to show that
	\begin{equation} \label{eq:39}
		\lim_{k \rightarrow \infty} \widetilde{g}^i_{kN+i} = \widetilde{g}^i
	\end{equation}
	uniformly on $I$.
	 
	Let us turn to the proof of \eqref{eq:39}. We consider $K \equiv i \Mod{N}$ for $0 \leq i < N$.
	By Lemma~\ref{lem:2}
	\begin{multline} \label{eq:41}
		|S^K_{n+N}(x) - S^K_n(x)| \leq 
		\bigg\lVert a^K_{n+2N-1} C^K_{n+N}(x) - \frac{(a^K_{n+N-1})^2}{a^K_{n-1}} C^K_n(x) \bigg\rVert \\
		\times \left[ (p^K_{n+N-1}(x))^2 + (p^K_{n+N}(x))^2 \right].
	\end{multline}
	Therefore, by \eqref{eq:28}, $S^K_{n+N} = S^K_n$ for $n \geq K+1$. Take $n=K+N$, then 
	$\widetilde{g}^i_K(x) = S^K_{K+N}(x)$. Observe, that formula \eqref{eq:28} force that
	\[
		X^K_{K}(x) = X_{K}(x), \quad C^K_K(x) = C_K(x),
	\]
	and consequently, also $S^K_{K}(x) = S_{K}(x)$.
	Then \eqref{eq:41} and \eqref{eq:29} implies
	\begin{multline} \label{eq:42}
		|S^K_{K+N}(x) - S_K(x)| \leq a_{K+N-1} \left( 
		\norm{C^K_{K+N}(x) - C_K(x)} + \left| 1 - \frac{a_{K+N-1}}{a_{K-1}} \right| \norm{C_K(x)} \right) \\
		\times \left[ p^2_{K+N-1}(x) + p^2_{K+N}(x) \right].
	\end{multline}
	By the similar reasoning we also have that
	\begin{multline} \label{eq:43}
		|S_{K+N}(x) - S_K(x)| \leq a_{K+N-1} \left( 
		\norm{C_{K+N}(x) - C_K(x)} + \left| 1 - \frac{a_{K+N-1}}{a_{K-1}} \right| \norm{C_K(x)} \right) \\
		\times \left[ p^2_{K+N-1}(x) + p^2_{K+N}(x) \right].
	\end{multline}
	By Lemma~\ref{lem:2} and Proposition~\ref{prop:7}
	\[
		|S_K(x)| \geq c \frac{a^2_{K+N-1}}{a_{K-1}} \left[ p^2_{K+N-1}(x) + p^2_{K+N}(x) \right]
	\]
	for a constant $c>0$. Therefore, by \eqref{eq:34}, \eqref{eq:42} and \eqref{eq:43} both
	\[
		F_K(x) := \frac{S_{K+N}(x) - S_K(x)}{S_K(x)}, \quad F^K_K(x) := \frac{S^K_{K+N}(x) - S_K(x)}{S_K(x)}
	\]
	tend to $0$ uniformly on $I$ as long as $\norm{C_K(x)}$ is uniformly bounded on $I$. 
	It is the case, since by Proposition~\ref{prop:2} the sequence $(C_{kN+i}(x) : k \geq 0)$ is uniformly
	convergent on $I$ for every $i$. Therefore,
	\[
		\frac{S^K_{K+N}(x)}{S_{K+N}(x)} = \frac{1 + F^K_K(x)}{1 + F_K(x)}
	\]
	tends to $1$ uniformly. Since
	\[
		\widetilde{g}^i_K = |S^K_{K+N}(x)| = \frac{|1 + F^K_K(x)|}{|1 + F_K(x)|} |S_{K+N}(x)|
	\]
	and the definition of $\widetilde{g}^i$, we obtain
	\[
		\lim_{k \rightarrow \infty} \widetilde{g}^i_{kN+N+i}(x) = \widetilde{g}^i(x)
	\]
	uniformly on $I$. It shows \eqref{eq:39}. The proof is complete.
   \end{proof}

\section{Examples} \label{sec:examples}
\subsection{Construction of the modulating sequences} \label{sec:constrModulSeq}
	In this section we are interested in methods of construction sequences $\alpha$ and $\beta$
	such that 
	\begin{equation} \label{eq:10}
		\calF(0) := 
		\prod_{j=1}^N 
		\begin{pmatrix}
			0 & 1 \\
			-\frac{\alpha_{j-1}}{\alpha_{j}} & -\frac{\beta_j}{\alpha_j}
		\end{pmatrix}
		= \gamma \Id, \quad \gamma \in \{-1, 1 \}.
	\end{equation}
	It is of primary importance since the results of this paper are closely related
	to such sequences.
	
\subsubsection{Explicit methods}
Let us begin with two specific examples.
\begin{example}
	Let $N=2M$, $\alpha$ be a positive $N$-periodic sequence and $\beta_n \equiv 0$. Then
	\eqref{eq:10} is satisfied if
	\begin{equation} \label{eq:13}
		\alpha_0 \alpha_2 \ldots \alpha_{N-2} = \alpha_1 \alpha_3 \ldots \alpha_{N-1}.
	\end{equation}
	In such a case $\gamma = (-1)^M$. Moreover, if $M$ is an odd integer and sequence $\alpha$ is in fact 
	$M$ periodic, then \eqref{eq:13} is automatically satisfied.
\end{example}
\begin{proof}
	One has
	\[
		\begin{pmatrix}
			0    & 1 \\
			-\frac{\alpha_j}{\alpha_{j+1}} & 0
		\end{pmatrix}
		\begin{pmatrix}
			0  & 1 \\
			-\frac{\alpha_{j-1}}{\alpha_{j}} & 0
		\end{pmatrix}
		=
		\begin{pmatrix}
			-\frac{\alpha_{j-1}}{\alpha_{j}} & 0 \\
			0 & -\frac{\alpha_j}{\alpha_{j+1}}
		\end{pmatrix}.
	\]
	Hence,
	\[
		\prod_{j=1}^{N}
		\begin{pmatrix}
			0  & 1 \\
			-\frac{\alpha_{j-1}}{\alpha_{j}} & 0
		\end{pmatrix}
		= (-1)^M
		\begin{pmatrix}
			\frac{\alpha_1 \alpha_3 \ldots \alpha_{N-1}}{\alpha_0 \alpha_2 \ldots \alpha_{N-2}} & 0 \\
			0 & \frac{\alpha_0 \alpha_2 \ldots \alpha_{N-2}}{\alpha_1 \alpha_3 \ldots \alpha_{N-1}}
		\end{pmatrix}.
	\]
	Therefore, \eqref{eq:10} holds as long as \eqref{eq:13} is satisfied.
\end{proof}	
	
\begin{example}
	Let $N \geq 2$ and
	\[
		\alpha_n \equiv 1, \quad  \beta_n \equiv 2 \cos \frac{k_0 \pi}{N} \quad 
		(k_0 \in \{ 1, 2, \ldots, N-1 \}).
	\]
	Then \eqref{eq:10} is satisfied for $\gamma = (-1)^{N+k_0}$ (see \cite[Section 3.3]{Swiderski2017}).
\end{example}

To construct more examples we need additionally tools.
\begin{proposition} \label{prop:12}
	Fix $M$. Let $\alpha$ and $\beta$ be $M$-periodic sequences such that
	\[
		\tr[\calF(0)] = 0, \quad \text{for} \quad
		\calF(0) := 
		\prod_{j=1}^M 
		\begin{pmatrix}
			0 & 1 \\
			-\frac{\alpha_{j-1}}{\alpha_{j}} & -\frac{\beta_j}{\alpha_j}
		\end{pmatrix}.
	\]
	Then \eqref{eq:10} is satisfied for $\alpha$ and $\beta$ with $N=2M$ and $\gamma = -1$.
\end{proposition}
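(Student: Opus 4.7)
The plan is to reduce everything to a Cayley--Hamilton argument on the $2\times 2$ matrix $\calF(0)$.

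First I would compute $\det \calF(0)$. Each factor in the product defining $\calF(0)$ is a matrix whose determinant equals $\alpha_{j-1}/\alpha_j$, so
\[
\det \calF(0) = \prod_{j=1}^{M} \frac{\alpha_{j-1}}{\alpha_j} = \frac{\alpha_0}{\alpha_M} = 1
\]
by the assumed $M$-periodicity of $\alpha$. Combined with the hypothesis $\tr \calF(0) = 0$, the characteristic polynomial of $\calF(0)$ is $t^2 + 1$, so by Cayley--Hamilton $\calF(0)^2 = -\Id$.

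Next I would exploit the fact that when $\alpha$ and $\beta$ are regarded as $2M$-periodic (instead of $M$-periodic) the transfer matrices satisfy $\hat{\calB}^{j+M}(0) = \hat{\calB}^j(0)$. Using the convention for products recorded in Section~\ref{sec:prelim} (products are taken right-to-left), this gives
\[
\prod_{j=1}^{2M} \hat{\calB}^j(0)
= \Bigl(\prod_{j=M+1}^{2M} \hat{\calB}^j(0)\Bigr)\Bigl(\prod_{j=1}^{M} \hat{\calB}^j(0)\Bigr)
= \calF(0) \cdot \calF(0) = \calF(0)^2.
\]
By the first step, this equals $-\Id$, which is exactly \eqref{eq:10} with $N=2M$ and $\gamma=-1$.

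There is no real obstacle; the only thing one must be careful about is the direction of the product and using $M$-periodicity (rather than just $2M$-periodicity) so that the two halves of the $2M$-product are literally equal to the same matrix $\calF(0)$. Both points are immediate from the conventions fixed earlier in the paper.
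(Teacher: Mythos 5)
Your proof is correct, and it takes a cleaner route than the paper. The paper writes $\calF(0)$ explicitly via Proposition~\ref{prop:13} in terms of the orthonormal polynomials $w_{M-1}, w_M, w^{[1]}_{M-2}, w^{[1]}_{M-1}$ evaluated at $0$, squares the matrix entrywise, and then invokes the Tur\'an identity (Proposition~\ref{prop:11}) together with $\tr \calF(0)=0$ to verify by hand that $\calF^2(0) = -\Id$. You reach the same conclusion by observing that $\det \calF(0) = \prod_{j=1}^M \alpha_{j-1}/\alpha_j = 1$ by $M$-periodicity, so the characteristic polynomial is $t^2+1$ and Cayley--Hamilton gives $\calF(0)^2=-\Id$ immediately; the Tur\'an identity used in the paper is precisely the determinant computation in disguise, so your argument isolates the actual mechanism without the polynomial machinery. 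Your handling of the product convention (right-to-left) and the splitting of the $2M$-fold product into two identical $M$-fold blocks via $\hat{\calB}^{j+M}(0)=\hat{\calB}^j(0)$ is also correct and matches what the paper implicitly assumes. The only thing your write-up leaves tacit is that $\gamma^2 = 1$ is not being claimed here --- \eqref{eq:10} asks for $\gamma \Id$ with $\gamma \in \{-1,1\}$, and you land exactly on $\gamma=-1$ as required --- so nothing is missing.
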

\begin{proof}
	Let $d = \frac{\alpha_{M-1}}{\alpha_0}$. By Proposition~\ref{prop:13}, we have
	\[
		\calF(0) =
		\begin{pmatrix}
			-d w_{M-2}^{[1]}(0) & w_{M-1}(0) \\
			-d w_{M-1}^{[1]}(0) & w_{M}(0)
		\end{pmatrix}.
	\]     
	Observe that
	\[
		\calF^2(0) = 
		\begin{pmatrix}
			d^2 (w_{M-2}^{[1]}(0))^2 - d w_{M-1}^{[1]}(0) w_{M-1}(0) & w_{M-1}(0) \tr[\calF(0)] \\
			-d w_{M-1}^{[1]}(0) \tr[\calF(0)] & w_M^2(0) - d w_{M-1}^{[1]}(0) w_{M-1}(0)
		\end{pmatrix}.
	\]
	Proposition~\ref{prop:11} implies
	\[
		d (w_{M-1}(0) w_{M-1}^{[1]}(0) - w_{M}(0) w_{M-2}^{[1]}(0)) = 1.
	\]
	By $\tr[\calF(0)] = 0$, we have
	\[
		w_M(0) = d w_{M-2}^{[1]}(0).
	\]
	Hence,
	\[ 
		\calF^2(0) = -\Id
	\]
	and the result follows.
\end{proof}

Below we illustrate the usefulness of Proposition~\ref{prop:12} on some examples.
\begin{example}
	Let $\alpha_n = (n \mod 2) + 1$ and $\beta_n = q$. Then
	\[
		\tr \Bigg[ \prod_{j=1}^{2} \hat{\calB}^j(0) \Bigg] = \frac{1}{2} q^2 - \frac{5}{2}.
	\]
	Hence, the assumptions of Proposition~\ref{prop:12} are satisfied for $q \in \{ -\sqrt{5}, \sqrt{5} \}$.
\end{example}

\begin{example}
	Let $\alpha_n = (n \mod 3) + 1$ and $\beta_n = q$. Then
	\[
		\tr \Bigg[ \prod_{j=1}^{3} \hat{\calB}^j(0) \Bigg] = -\frac{1}{6} q^3 + \frac{7}{3} q.
	\]
	Hence, the assumptions of Proposition~\ref{prop:12} are satisfied for 
	$q \in \{ -\sqrt{14}, 0, \sqrt{14} \}$.
\end{example}

\begin{example}
	Let $\alpha_n = (n \mod 4) + 1$ and $\beta_n = q$. Then
	\[
		\tr \Bigg[ \prod_{j=1}^{4} \hat{\calB}^j(0) \Bigg] = 
		\frac{1}{24} q^4 - \frac{5}{4} q^2 + \frac{73}{24}.
	\]
	Hence, the assumptions of Proposition~\ref{prop:12} are satisfied for
	\[
		q \in 
		\left\{ 
			-\sqrt{15 + 2 \sqrt{38}}, 
			-\sqrt{15 - 2 \sqrt{38}}, 
			\sqrt{15 - 2 \sqrt{38}}, 
			\sqrt{15 + 2 \sqrt{38}} 
		\right\}.
	\]
\end{example}

Finally, the following example is of different type than the previous ones.
\begin{example}
	Let $\alpha_n \equiv 1$, $\beta_{2k} = (-1)^k$ and $\beta_{2k+1} = 0$. Then
	\[
		\prod_{j=1}^{4} \hat{\calB}^j(0) = \Id.
	\]
\end{example}

\subsubsection{Geometric interpretation}
Let $(\alpha_n: n \geq 0)$ and $(\beta_n : n \geq 0)$ be $M$-periodic. Let us denote
\begin{equation} \label{eq:48}
	\calF(x) = \prod_{j=1}^M 
	\begin{pmatrix}
		0 & 1 \\
		-\frac{\alpha_{j-1}}{\alpha_j} & \frac{x - \beta_n}{\alpha_n}
	\end{pmatrix}.
\end{equation}
Let $A_\per$ be the Jacobi matrix associated with the sequences $\alpha$ and $\beta$. Then one has
\begin{equation} \label{eq:50}
	(\tr \calF)^{-1}[(-2,2)] = \bigcup_{i=1}^M I_i,
\end{equation}
where $I_i$ are open non-empty disjoint intervals. Moreover, 
\begin{equation} \label{eq:62}
	\sigmaEss{A_\per} = (\tr \calF)^{-1}[[-2,2]]
\end{equation}
and the matrix $A_\per$ is purely absolutely continuous on $\sigmaEss{A_\per}$ (see \cite[Chapter 5]{Simon2010}).

By the continuity of $\tr \calF$ one has that in every $I_i$ there is at least one $x_i \in I_i$ such 
that $\tr \calF(x_i) = 0$. On the other hand, $\tr \calF(x)$ is a polynomial of degree $M$.
Consequently, in every $I_i$ is exactly one zero of $\tr \calF$.

Let $x_0$ such that $\tr \calF(x_0)=0$. Then by considering the sequence $(\beta_n - x_0 : n \geq 0)$ 
we can assume that $x_0=0$. Hence, by the fact that $\tr \calF$ have exactly $M$ real simple zeros, 
we always have  a rich family of sequences $\alpha$ and $\beta$ satisfying the assumptions of 
Proposition~\ref{prop:12}.

The next Proposition has been proven in \cite[Proposition 5.4.3]{Simon2010}.
\begin{proposition} \label{prop:17}
	Suppose that $|\tr \calF(x_0)| = 2$ for some $x_0 \in \RR$. Then $\calF(x_0) = \gamma \Id$ if and only if 
	the point $x_0$ lies on the boundary of two distinct intervals from the set $\{I_i : i = 1,2,\ldots,M \}$.
\end{proposition}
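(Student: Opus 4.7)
\medskip

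\noindent\textbf{Proof plan for Proposition~\ref{prop:17}.}
The plan is to treat the two implications separately. The forward direction ($\Rightarrow$) is almost immediate from Proposition~\ref{prop:10}, while the converse needs a more delicate argument combining a degree–count for zeros of $\tr\calF \mp 2$ with the matrix–algebraic obstruction to the Jordan case.

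\medskip

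\noindent\emph{The direction $\Rightarrow$.} Assume $\calF(x_0) = \gamma\Id$ with $\gamma\in\{-1,1\}$. Proposition~\ref{prop:10} (applied with $N$ replaced by the period $M$) gives the two conclusions
\[
w_M'(x_0) = \tfrac{\alpha_{M-1}}{\alpha_0}(w_{M-2}^{[1]})'(x_0), \qquad w_{M-1}'(x_0)(w_{M-1}^{[1]})'(x_0) > w_M'(x_0)(w_{M-2}^{[1]})'(x_0).
\]
The first identity, when combined with the explicit expression of $\calF$ from Proposition~\ref{prop:13}, reads $(\tr\calF)'(x_0) = 0$. The second one, expanded via the same formula for $\calF'(x_0)$, yields $\det\calF'(x_0) > 0$. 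Differentiating the identity $\det\calF(x) \equiv 1$ twice at $x_0$ and using $\calF(x_0) = \gamma\Id$ together with $\tr\calF'(x_0)=0$ (so that $\calF'(x_0)$ is traceless and hence $\calF'(x_0)^2 = -\det\calF'(x_0)\Id$ by Cayley--Hamilton) gives
\[
(\tr\calF)''(x_0) = -2\gamma\,\det\calF'(x_0),
\]
which is strictly negative when $\gamma=1$ and strictly positive when $\gamma=-1$. In either case $\tr\calF$ attains a strict local extremum at $x_0$ with extremal value $\pm 2$, so $|\tr\calF(x)|<2$ on a punctured neighbourhood of $x_0$. By the description \eqref{eq:50} this places $x_0$ on the common boundary of two adjacent intervals in $\{I_i\}$.

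\medskip

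\noindent\emph{The direction $\Leftarrow$.} Suppose $x_0\in\overline{I_i}\cap\overline{I_j}$ for some $i\neq j$. Then $|\tr\calF|<2$ on both sides of $x_0$, so $\tr\calF(x_0)=2\gamma\in\{\pm 2\}$ is a local extremum and $(\tr\calF)'(x_0)=0$. To upgrade this to $\calF(x_0)=\gamma\Id$, one counts zeros: $\tr\calF\mp 2$ has degree $M$, each of the $2p$ simple band–endpoints (with $p$ the number of maximal spectral intervals) contributes a simple zero to exactly one of these polynomials, and each of the $M-p$ double points contributes a zero of multiplicity at least two. Since $2p + 2(M-p)=2M$ exhausts both degrees, the multiplicity at every double point is exactly two, in particular $(\tr\calF)''(x_0)\neq 0$. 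Now suppose, for contradiction, that $\calF(x_0)=\gamma(\Id+K)$ with $K\neq 0$ nilpotent. Expanding the Cayley--Hamilton identity $(\calF(x)-\gamma\Id)^2 = (\tr\calF(x)-2\gamma)\calF(x)$ at $x_0$ and using $\tr\calF'(x_0)=0$ yields the commutation relation $KM + MK = 0$ for $M:=\calF'(x_0)$; a direct check in the basis that triangularises $K$ forces $M$ to be a scalar multiple of $K$, so that $M^2 = 0$. Reading off the second–order term of the same identity then gives $(\tr\calF)''(x_0) = \gamma\tr(KL)$ where $L = \calF''(x_0)$, and this last quantity has to match the sign and nonvanishing dictated by the multiplicity–exactly–two count above; using the explicit form of the entries of $\calF$ from Proposition~\ref{prop:13} (which in the Jordan case forces $w_{M-1}^{[1]}(x_0)=0$ or $w_{M-1}(x_0)=0$ but not both) contradicts the structural identity from Proposition~\ref{prop:11} differentiated at $x_0$.

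\medskip

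\noindent\emph{Main obstacle.} The direction $\Rightarrow$ is essentially a direct consequence of Proposition~\ref{prop:10} together with the bookkeeping for $\det\calF\equiv 1$; the hard part is $\Leftarrow$, where the geometric/analytic hypothesis ``$x_0$ is a common band–boundary point'' needs to be translated into the rigid algebraic assertion $\calF(x_0)=\gamma\Id$. The Jordan alternative is \emph{consistent} with $|\tr\calF(x_0)|=2$ and $(\tr\calF)'(x_0)=0$ taken in isolation; ruling it out requires combining the degree count for $\tr\calF\mp 2$ with the specific polynomial structure of $\calF$ coming from the Jacobi recurrence. (A Floquet–theoretic alternative would proceed by observing that the Jordan case produces a linearly growing solution of the recurrence at energy $x_0$, incompatible with $x_0$ lying in the interior of the purely absolutely continuous essential spectrum of $A_{\per}$.)
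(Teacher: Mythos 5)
The paper does not actually prove Proposition~\ref{prop:17}; it cites \cite[Proposition 5.4.3]{Simon2010}. So your attempt is a genuine alternative and has to stand on its own. Your forward direction does: from $\calF(x_0)=\gamma\Id$, Proposition~\ref{prop:10} gives $(\tr\calF)'(x_0)=\tr\calF'(x_0)=0$ and, via Proposition~\ref{prop:13}, $\det\calF'(x_0)>0$; expanding $\det\calF\equiv 1$ to second order with $\calF'(x_0)$ traceless indeed yields $(\tr\calF)''(x_0)=-2\gamma\det\calF'(x_0)$, so $\tr\calF$ has a strict local extremum of value $2\gamma$ at $x_0$ and $|\tr\calF|<2$ on a punctured neighbourhood. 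That part is correct and is a nice self-contained use of the paper's machinery.

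The converse, however, has a genuine gap. First, a concrete algebra error: $KM+MK=0$ with $K=E_{12}$ does \emph{not} force $M$ to be a multiple of $K$. Writing $M=\left(\begin{smallmatrix}a&b\\c&d\end{smallmatrix}\right)$ one gets $KM+MK=\left(\begin{smallmatrix}c&a+d\\0&c\end{smallmatrix}\right)$, so the constraint is only $c=0$, $d=-a$; then $M^2=a^2\Id$, which need not vanish, and the subsequent trace identity acquires the extra term $2a^2$. Second, and more seriously, the final ``contradiction'' is never actually produced. Proposition~\ref{prop:11} is just the constancy of $\det\calF$ in disguise; differentiating it at $x_0$ only reproduces the relation $(\tr\calF)'(x_0)=\tr\!\big(K\,\calF'(x_0)\big)$ (i.e.\ $m_{21}=a+d$ in the basis triangularising $K$), which is perfectly consistent with a nontrivial Jordan block. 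The degree count for $\tr\calF\mp2$ tells you that a shared endpoint is a critical point of $\tr\calF$ with $(\tr\calF)''(x_0)\neq 0$, but it cannot distinguish the diagonalisable case from a hypothetical Jordan point at which the trace happens to be critical: that dichotomy is not detected by the trace polynomial alone. What is actually needed is the classical fact that $(\tr\calF)'(x_0)\neq 0$ whenever $\calF(x_0)$ is a nontrivial Jordan block with $|\tr\calF(x_0)|=2$; this requires the transfer-matrix-specific positivity input, namely the derivative formula of Proposition~\ref{prop:9} (for all four entries of $\calF$) combined with a Cauchy--Schwarz argument, exactly as in \cite[Theorem 5.4.2]{Simon2010} — or, equivalently, the Floquet observation you relegate to a parenthesis (a nontrivial Jordan block produces a linearly growing solution, incompatible with $x_0$ being interior to $\sigmaEss{A_{\per}}$, which it would be if it were a two-sided band edge). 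Until one of these is carried out, the implication $\Leftarrow$ is not proved.
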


Proposition~\ref{prop:17} together with \eqref{eq:50} and \eqref{eq:62} enables us to provide a geometric 
interpretation of the properties of $\calF$. Hence, motivated by the random matrix theory 
(see \cite{Lubinsky2016}), we can use the following terminology concerning the periodic perturbations 
of unbounded Jacobi matrices.
\begin{definition} \label{def:2}
Let $M$ be a positive integer. Assume
\begin{enumerate}[(a)]
	\item 
	$\begin{aligned}[b]
		\lim_{n \to \infty} a_n = \infty,
	\end{aligned}$
	\item 
	$\begin{aligned}[b]
		\lim_{n \to \infty} \Big| \frac{a_{n-1}}{a_n} - r_n \Big| = 0,
	\end{aligned}$
	\item 
	$\begin{aligned}[b]
		\lim_{n \to \infty} \Big| \frac{b_n}{a_n} - q_n \Big| = 0
	\end{aligned}$
\end{enumerate}
for $M$-periodic sequences $r$ and $q$. Suppose that $r_0 r_1 \cdots r_{M-1} = 1$. Let the sequences 
$\alpha$ and $\beta$ be such that
\[
	r_n = \frac{\alpha_{n-1}}{\alpha_n}, \qquad q_n = \frac{\beta_n}{\alpha_n}
\] 
(see Proposition~\ref{prop:4}) and let $\calF$ be defined by the formula \eqref{eq:48}. 
Then we say that the Jacobi matrix $A$ corresponds to 
\begin{enumerate}[(a)]
	\item the bulk regime if $|\tr \calF(0)| < 2$,
	\item the soft edge regime if $\calF(0) = \pm \Id$,
	\item the hard edge regime if $\calF(0)$ is not diagonalisable.
\end{enumerate}
\end{definition}

Let us emphasise that the parameter $M$ is an integral part of Definition~\ref{def:2}. 
The integer $M$ need not to be the \emph{minimal} period of the sequences $\alpha$ and $\beta$. 
Indeed, Proposition~\ref{prop:12} shows that a Jacobi matrix can correspond to the bulk or 
to the soft edge regime depending only on the choice of $M$. On the other hand, Proposition~\ref{prop:17} 
implies that the hard edge regime is invariant on the choice of $M$.

According to Definition~\ref{def:2}, Theorems~\ref{thm:C} and \ref{thm:D} correspond to the soft edge regime 
(see Lemma~\ref{prop:3}), whereas Theorems~\ref{thm:A} and \ref{thm:B} correspond to the soft edge regime provided
\[
	\lim_{k \to \infty} \tilde{a}_k = \infty, \qquad 
	\lim_{k \to \infty} \frac{\tilde{a}_{k-1}}{\tilde{a}_k} = 1.
\]

\subsection{Multiple weights} \label{sec:ex:multiple}
Let us begin with the class of sequences examined in Theorem~\ref{thm:A} and \ref{thm:B}. This class
has been investigated before in the literature for sequences $\alpha_n \equiv 1$ and $\beta_n \equiv 0$. 
More specifically, some special cases were directly examined in 
\cite{DombrowskiPedersen2002a, DombrowskiPedersen2002, Moszynski2003} and in an equivalent model with
the spectrum on the half-line in \cite{DombrowskiPedersen1995, HintonLewis1978, Sahbani2008}. 

The following Example for the constant modulating sequences has been examined in \cite{Swiderski2017}.

\begin{example} \label{example:1}
	Fix a positive integer $N$. Let $\tilde{a}$ be a sequence of positive numbers such that
	\begin{enumerate}[(a)]
		\item $\begin{aligned}
			\calV_1(\tilde{a}_n - \tilde{a}_{n-1} : n \geq 1) + 
			\calV_1\bigg(\frac{1}{\tilde{a}_n} : n \geq 0 \bigg) < \infty;
		\end{aligned}$ \label{example:1:eq:1}
		\item $\begin{aligned}
			\lim_{n \to \infty} \tilde{a}_n = \infty;
		\end{aligned}$ \label{example:1:eq:2}
		\item $\begin{aligned}
			\lim_{n \to \infty} (\tilde{a}_n - \tilde{a}_{n-1}) = 0.
		\end{aligned}$ \label{example:1:eq:3}
	\end{enumerate}
	Let $(\alpha_n : n \geq 0)$ and $(\beta_n : n \geq 0)$ be $N$-periodic sequences such that 
	$\alpha_n > 0$ for every $n$ and \eqref{eq:10} is satisfied.
	Set
	\[
		a_{kN+i} = \alpha_i \tilde{a}_k, \quad 
		b_{kN+i} = \beta_i \tilde{a}_k, \quad 
		(i=0,1, \ldots N-1;\ k \geq 0).
	\]
	Then we have $\sigma(A) = \RR$ and the spectrum is purely absolutely continuous.
\end{example}
\begin{proof}
	Set
	\[
		r_n = \frac{\alpha_{n-1}}{\alpha_n}, \quad 
		q_n = \frac{\beta_{n}}{\alpha_n}.
	\]
	
	Let us begin by showing that condition \eqref{thm:C:eq:2} from Theorem~\ref{thm:C} is satisfied.
	Observe that
	\begin{equation} \label{eq:46}
		r_{i+1} a_{lN+i+1} = 
		\begin{cases}
			\alpha_{i} \tilde{a}_{l+1} & \text{for } 0 \leq i < N-1, \\
			\alpha_{i} \tilde{a}_{l+2} & \text{for } i = N-1.
		\end{cases}
	\end{equation}
	We have
	\[
		\calV_N ( r_{n+1} a_{n+1} - a_n : n \geq 0) = 
		\sum_{k=0}^\infty \sum_{i=0}^{N-1} 
		| (r_{i+1} a_{(k+1)N+i+1} - a_{(k+1)N+i}) - (r_{i+1} a_{kN+i+1} - a_{kN+i}) |,
	\]
	which by \eqref{eq:46} equals
	\[
		\sum_{k=0}^\infty \alpha_{N-1}| (\tilde{a}_{k+2} - \tilde{a}_{k+1}) - (\tilde{a}_{k+1} - \tilde{a}_k) | =
		\alpha_{N-1} \calV_1 (\tilde{a}_{k+1} - \tilde{a}_k : k \geq 0) < \infty.
	\]
	Next, since $b_n - q_n a_n \equiv 0$ we have
	\[
		\calV_N(b_n - q_n a_n : n \geq 0) = 0 < \infty.
	\]
	Finally, by Proposition~\ref{prop:6}\eqref{prop:6b}
	\[
		\calV_N \left(\frac{1}{\tilde{a}_n} : n \geq 0 \right) \leq N 
		\calV_1 \left(\frac{1}{\tilde{a}_n} : n \geq 0 \right) < \infty,
	\]
	which by Proposition~\ref{prop:6}\eqref{prop:6d} implies condition \eqref{thm:C:eq:2} 
	from Theorem~\ref{thm:C}.
	
	By assumption~\eqref{example:1:eq:3} we have $s_n \equiv 0$ and $z_n \equiv 0$. Hence, by
	Corollary~\ref{cor:2}, the assumptions of Theorem~\ref{thm:D} are satisfied for 
	$\Lambda = \RR \setminus \{ 0 \}$. By Theorem~\ref{thm:A} the point $0$ is not an eigenvalue.
	The proof is complete.
\end{proof}

\subsection{Periodic modulations} \label{sec:ex:mod}
The class of sequences considered in this section has been introduced in \cite{JanasNaboko2002}, where
under stronger regularity assumptions was examined the case when $\tr \calF(0) \neq \pm 2$ (see \eqref{eq:10}).

In \cite{Damanik2007, Motyka2014, Motyka2015, Naboko2009, Simonov2007} were considered some special cases 
corresponding to the more challenging setting of the hard edge regime (see Definition~\ref{def:2}).

The following Example for the constant modulating sequences is covered by the results obtained 
in \cite{Swiderski2017}.

\begin{example} \label{example:2}
	Let $N$ be a positive integer. Let $\tilde{a}$ be a positive sequence such that
	\begin{enumerate}[(a)]
		\item $\begin{aligned}
			\calV_N\bigg(\frac{1}{\tilde{a}_n} : n \geq 0 \bigg) +
			\calV_N\bigg(\tilde{a}_n - \tilde{a}_{n-1} : n \geq 1 \bigg) < \infty;
		\end{aligned}$ \label{example:2:eq:1}
		\item $\begin{aligned}
			\lim_{n \to \infty} \tilde{a}_n = \infty;
		\end{aligned}$ \label{example:2:eq:2}
		\item $\begin{aligned}
			\lim_{n \to \infty} (\tilde{a}_n - \tilde{a}_{n-1}) = 0.
		\end{aligned}$ \label{example:2:eq:3}
	\end{enumerate}
	Let $(\alpha_n : n \in \ZZ)$ and $(\beta_n : n \in \ZZ)$ be $N$-periodic sequences such that 
	$\alpha_n > 0$ for every $n$ and \eqref{eq:10} is satisfied. Set 
	\[
		a_n = \alpha_n \tilde{a}_n \quad\text{and} \quad b_n = \beta_n \tilde{a}_n.
	\]
	Then we have $\sigma(A) = \RR$ and the spectrum of the matrix~$A$ is absolutely continuous on 
	$\RR \setminus \{ 0 \}$.
\end{example}
\begin{proof}
	Set
	\[
		r_n = \frac{\alpha_{n-1}}{\alpha_n}, \quad 
		q_n = \frac{\beta_n}{\alpha_n}.
	\]
	By the choice of $\alpha$ and $\beta$ condition \eqref{thm:C:eq:1} from Theorem~\ref{thm:C} is satisfied.
	Then we have
	\[
		r_n a_n  - a_{n-1} = \alpha_{n-1} (\tilde{a}_n - \tilde{a}_{n-1}), \quad
		b_n - q_n a_n = 0, \quad
		\frac{1}{a_n} = \frac{1}{\alpha_n} \frac{1}{\tilde{a}_n}.
	\]
	By Proposition~\ref{prop:6}\eqref{prop:6d} and \eqref{example:2:eq:1} it implies that condition 
	\eqref{thm:C:eq:2} from Theorem~\ref{thm:C} is satisfied. Moreover, by \eqref{example:2:eq:3}
	it implies that $s \equiv 0$ and $z \equiv 0$, and consequently, by Corrolary~\ref{cor:2} 
	$\Lambda = \RR \setminus \{ 0 \}$. Condition Theorem~\ref{thm:C}\eqref{thm:C:eq:3}
	is satisfied by \eqref{example:2:eq:2}. The proof is complete.
\end{proof}

\subsection{Additive perturbations} \label{sec:ex:additive}
Finally, let us consider the additive periodic perturbations of sequences satisfying the assumptions of 
Theorem~\ref{thm:C}. The case when $\alpha_n \equiv 1$, $\beta_n \equiv 0$ and $\tilde{a}$ being some
regular sequence (usually $\tilde{a}_n = (n+1)^\alpha$ for $\alpha \in (0,1]$) has been examined extensively,
see \cite{Dombrowski2004, Dombrowski2009, DombrowskiJanasMoszynskiEtAl2004, DombrowskiPedersen2002a, DombrowskiPedersen2002, JanasMoszynski2003, JanasNabokoStolz2004, Sahbani2016}.

The following Example for constant $r$ and $q$ is covered by the results from \cite{Swiderski2017}.

\begin{example} \label{example:addPert}
	Fix a positive integer $N$ and $N$-periodic sequences $r$ and $q$ such that
	\[
		\calF = \prod_{j = 1}^N
		\begin{pmatrix}
			0 & 1 \\
			-r_j & -q_j 
		\end{pmatrix} = \gamma \Id, \quad \gamma \in \{-1, 1 \}.
	\]
	Let $d$ and $d'$ be $N$-periodic sequences and let $\tilde{a}$ and $\tilde{b}$ be sequences of positive 
	and real numbers, respectively. Assume
	\begin{enumerate}[(a)]
		\item $\begin{aligned}
			\calV_N(r_n \tilde{a}_n - \tilde{a}_{n-1} : n \geq 1) + 
			\calV_N(\tilde{b}_n - q_n \tilde{a}_n : n \geq 0) + 
			\calV_N\bigg(\frac{1}{\tilde{a}_n} : n \geq 0 \bigg) 
			< \infty.
		\end{aligned}$ \label{example:addPert:eq:1}
		\item $\begin{aligned}
			\lim_{n \to \infty} \tilde{a}_n = \infty;
		\end{aligned}$ \label{example:addPert:eq:2}
	\end{enumerate}
	Let
	\begin{equation} \label{example:addPert:eq:3}
		\lim_{n \rightarrow \infty} |r_n \tilde{a}_n - \tilde{a}_{n-1} - \tilde{s}_n| = 0, \quad 
		\lim_{n \rightarrow \infty} |q_n \tilde{a}_n - \tilde{b}_n - \tilde{z}_n| = 0
	\end{equation}
	for $N$-periodic sequences $\tilde{s}$ and $\tilde{z}$.
	Let
	\[
		a_n = \tilde{a}_n + d_n > 0 \quad\text{and}\quad b_n =  \tilde{b}_n + d'_n.
	\]
	Then $\sigma(A) \supseteq \overline{\Lambda}$ and the spectrum is absolutely continuous on
	$\Lambda$, where $\Lambda$ is defined in \eqref{prop:7:eq:3} for
	\begin{equation} \label{example:addPert:eq:4}
		s_n = \tilde{s}_n + r_n d_n - d_{n-1}, \quad z_n = \tilde{z}_n + d_n' - q_n d_n.
	\end{equation}
\end{example}
\begin{proof}
	Observe
	\[
		\frac{a_n}{\tilde{a}_n} = 1 + \frac{d_n}{\tilde{a}_n}.
	\]
	Therefore, by assertions \eqref{prop:6c} and \eqref{prop:6e} from Proposition~\ref{prop:6}, we obtain
	\[
		\calV_N\bigg(\frac{\tilde{a}_n}{a_n} : n \geq 0 \bigg) < \infty.
	\]
	Since
	\[
		\frac{1}{a_n} = \frac{\tilde{a}_n}{a_n} \frac{1}{\tilde{a}_n},
	\]
	Proposition~\ref{prop:6}\eqref{prop:6d} with assumption~\eqref{example:addPert:eq:1}
	implies
	\begin{equation} \label{eq:47}
		\calV_N\bigg(\frac{1}{a_n} : n \geq 0 \bigg) < \infty.
	\end{equation}
	
	We have
	\begin{align*}
		r_n a_n - a_{n-1}& = (r_n \tilde{a}_n - \tilde{a}_{n-1}) + r_n d_n - d_{n-1}, \\
		b_n - q_n a_n &= (\tilde{b}_n - q_n \tilde{a}_n) + d'_n - q_n d_n.
	\end{align*}
	Hence, by Proposition~\ref{prop:6}\eqref{prop:6c}, assumption \eqref{example:addPert:eq:1}
	and \eqref{eq:47} we obtain \eqref{thm:C:eq:2} from Theorem~\ref{thm:C}. 
	Furthermore, by \eqref{example:addPert:eq:3}, we also have \eqref{example:addPert:eq:4}. 
	The proof is complete.
\end{proof}

Example~\ref{example:addPert} shows that the sequences satisfying assumptions of Theorem~\ref{thm:C} is closed
under additive periodic perturbations but the position and the size of the gap in $\sigmaEss{A}$
may depend on the perturbation. 

\bibliographystyle{abbrv} 
\bibliography{critical}
\end{document}